\theoremstyle{definition}
\newtheorem{theorem}{Theorem}[section]
\newtheorem{proposition}[theorem]{Proposition}
\newtheorem{lemma}[theorem]{Lemma}
\newtheorem*{claim*}{Claim}
\newtheorem{remark}{Remark}
\numberwithin{remark}{section}
\numberwithin{theorem}{section}
\numberwithin{equation}{section}
\newcommand\C{{\mathbb C}}
\newcommand\R{{\mathbb R}}
\newcommand\Z{{\mathbb Z}}
\newcommand\cF{{\mathcal F}}
\newcommand\cP{{\mathcal P}}
\newcommand\ve{\varepsilon}
\newcommand\vp{\varphi}
\renewcommand\d{\partial}
\newcommand\vD{\lvert\nabla\rvert}
\newcommand\aD{\langle\nabla\rangle}
\DeclareMathOperator{\re}{Re}
\DeclareMathOperator{\supp}{supp}
\newcommand\dsp{\displaystyle}
\newcommand{\ol}[1]{\overline{#1}}
\newcommand{\wh}[1]{\widehat{#1}}
\newcommand{\wt}[1]{\widetilde{#1}}
\newcommand{\abs}[1]{\lvert#1\rvert}
\newcommand{\Abs}[1]{\bigl\lvert#1\bigr\rvert}
\newcommand{\ABS}[1]{\Bigl\lvert#1\Bigr\rvert}
\newcommand{\norm}[1]{\lVert#1\rVert}
\newcommand{\Norm}[1]{\bigl\lVert#1\bigr\rVert}
\newcommand{\NM}[1]{\Bigl\lVert#1\Bigr\rVert}
\newcommand{\rb}[1]{(#1)}
\newcommand{\Rb}[1]{\bigl(#1\bigr)}
\newcommand{\RB}[1]{\Bigl(#1\Bigr)}
\newcommand{\cb}[1]{\{#1\}}
\newcommand{\Cb}[1]{\bigl\{#1\bigr\}}
\newcommand{\ab}[1]{\langle#1\rangle}
\newcommand{\Ab}[1]{\bigl\langle#1\bigr\rangle}
\newcommand{\sqb}[1]{[#1]}
\newcommand{\Sb}[1]{\bigl[#1\bigr]}
\newcommand{\SB}[1]{\Bigl[#1\Bigr]}
\title{Small energy scattering for radial solutions to the generalized Zakharov system}
\author{Jun Kato \and Osamu Tojo}
\date{}
\begin{document}

\maketitle

\begin{abstract}
We prove the small energy scattering for the three-dimensional generalized Zakharov system with radial symmetry based on the idea by Guo and Nakanishi \cite{guo-nakanishi_1}, which treats the usual Zakharov system.
For the proof, we use the frequency-localized normal form reduction, and the radially improved Strichartz estimates.
The relation between the solution to the integral equations, which includes the unusual boundary terms, and the original differential equations is also considered.
\end{abstract}

\section{Introduction}

In this paper, we consider the following generalized Zakharov system in three spatial dimensions,
\begin{equation*}
(Z_{\gamma})\quad \left\{
\begin{aligned}
& i\partial_{t}u -\Delta u=nu,\quad (t,x)\in (0,\infty)\times\R^{3},\\
& \partial_{t}^{2} n-\Delta n=\abs{\nabla}^{1+\gamma} \abs{u}^{2},\quad (t,x)\in (0,\infty)\times\R^{3},\\
& u|_{t=0}=u_{0},\ n|_{t=0}=n_{0},\ \partial_{t}n|_{t=0}=n_{1},\quad x\in\R^{3},
\end{aligned}
\right.
\end{equation*}
where $\gamma\in [-1,1]$,
\[
  u:[0,\infty)\times\R^{3}\to \C,\qquad
  n:[0,\infty)\times\R^{3}\to\R.
\]
We notice that the case $\gamma=1$ is the Zakharov system, modeling propagation of Langmuir waves in an ionized plasma, and the case $\gamma=-1$ is the (mass-less) Yukawa system, which is a model for the interaction between a meson and a nucleon.

The system $(Z_{\gamma})$ is interesting as one of a source of the Hartree type equation.
In fact, if we consider the second equation with the parameter
\[
  \frac{1}{\alpha^{2}}\,\partial_{t}^{2} n-\Delta n=\abs{\nabla}^{1+\gamma} \abs{u}^{2},
\] 
then taking the subsonic limit ($\alpha\to\infty$) we obtain 
\[
  i\partial_{t}u -\Delta u=
  c_{\gamma} \rb{\abs{x}^{-2-\gamma}\ast \abs{u}^{2}}u.
\]
Our interest to this system is to know how the size of $\gamma$ effects to the global well-posedness to the system for small data.

The system $(Z_{\gamma})$ has conserved quantities $M=\norm{u(t)}_{L^{2}}$, and the energy
\begin{align*}
  E=\int_{\R^{3}}\Cb{ \abs{\nabla u(t)}^{2} + \frac{1}{2}\,\Abs{\abs{\nabla}^{\frac{1-\gamma}{2}-1}\partial_{t}n(t)}^{2} +\frac{1}{2}\,\Abs{\abs{\nabla}^{\frac{1-\gamma}{2}} n(t)}^{2} -n(t)\abs{u(t)}^{2} }\,dx.
\end{align*}
Below we focus on the solution of the energy class
\[
  u\in C\rb{[0,\infty);H^{1}},\quad n\in C\rb{[0,\infty);H^{\frac{1-\gamma}{2}}},\quad
  \d_{t}n \in C\rb{[0,\infty);H^{-\frac{1+\gamma}{2}}}.
\]

As for the Zakharov system $(Z_{1})$, the existence of unique global solution and scattering is proved by Hani, Pusateri, and Shatah \cite{hani-pusateri-shatah} for small initial data which belongs to some weighted Sobolev space.
Their method is based on the space-time resonance method, which is developed by German, Masmoudi, and Shatah. See e.g. \cite{germain-masmoudi-shatah}.
Guo and Nakanishi \cite{guo-nakanishi_1} is proved the existence of unique radial global solution and scattering in the framework of the energy class for small initial data.
Their method is based on the use of radially improved Strichartz estimates, after applying the method of the normal form partially.
In that way, they were able to avoid the use of weights and were able to close the argument in the energy class.

As for the Zakharov system $(Z_{\gamma})$ with $\gamma >0$, based on the space-time resonance method, Beck, Pusateri, Sosoe, and Wong \cite{beck-pusateri-sosoe-wong} proved the existence of unique global solution and scattering for small initial data which belongs to some weighted Sobolev space.

The aim of this paper is to study the condition on $\gamma$ which ensures the existence of unique global solution to $(Z_{\gamma})$ in the framework of the energy class under the assumption of radial symmetry, based on the idea of Guo and Nakanishi \cite{guo-nakanishi_1}.

This paper is organized as follows.
In the rest of this section, after explaining notation of frequency localization, we explain the derivation of the integral equation which we consider, based on the idea of Guo and Nakanishi \cite{guo-nakanishi_1}.
Then we describe our main results.
In section 2, we summarize basic estimates, especially the radially improved Strichartz estimates, and the estimate of the bilinear Fourier multiplier.
In section 3, we summarize the estimates of nonlinear terms.
In sections 4 and 5, we give proof of Theorems 1.1 and 1.2, respectively.
In section \ref{sec-appendix}, we give a proof of the scattering in the case $\gamma=1$, since it is just mentioned to hold in \cite{guo-nakanishi_1}, and we need a lemma additionally.

\subsection{Reduction of the system}

We first reduce the system to the first order system by setting $\dsp N=n-i\abs{\nabla}^{-1}n_{t}$,
\begin{equation}\label{differential-equation}
\left\{
\begin{aligned}
& i\partial_{t}u -\Delta u=\frac{1}{2}(Nu+\overline{N}u),\quad (t,x)\in (0,\infty)\times\R^{3},\\
& i\partial_{t} N+\abs{\nabla} N=\abs{\nabla}^{\gamma} |u|^{2}.
\end{aligned}\right.
\end{equation}
Note that $n=\re N$.
Then, denoting
\[
  S(t)=e^{-it\Delta},\quad
  W(t)=e^{it\abs{\nabla}},
\]
the corresponding integral equations are the following.
\begin{equation}\label{integral-equation}
\left\{
\begin{aligned}
  & u(t)=S(t)u_{0}-\frac{i}{2}\int_{0}^{t}S(t-\tau)\Rb{N(\tau)+\ol{N}(\tau)}u(\tau)\,d\tau,\\
  & N(t)=W(t)N_{0} -i\int_{0}^{t}W(t-\tau)\abs{\nabla}^{\gamma}\abs{u(\tau)}^{2}\,d\tau.
\end{aligned}\right.
\end{equation}
In our argument, the term $\ol{N}u$ makes no essential difference from $Nu$, and hence for simplicity, we assume the nonlinear term in the first equation $Nu$ below.

Following Guo-Nakanishi \cite{guo-nakanishi_1}, we apply the method of the normal form after decomposing the nonlinear terms in terms of the frequency.
We give its notation in next subsection.


\subsection{Notation}\label{notation}

Let $\varphi\in C_{0}^{\infty}(\R^{n})$ be radial and satisfy $0\le \varphi (\xi)\le 1$,
\[
  \varphi (\xi)=\left\{ 
  \begin{aligned} & 1,\quad |\xi|\le 5/4,\\
    & 0,\quad |\xi| >8/5,
  \end{aligned}\right.
\]
and we set $\phi(\xi)=\varphi(\xi)-\varphi(2\xi)$.
For $j\in\Z$, we define $\dsp \phi_{j}(\xi)=\phi\rb{\xi/2^{j}}$, and $\dsp \varphi_{j}(\xi)=\varphi\rb{\xi/2^{j}}$.
Then, $\cb{\phi_{j}}_{j\in\Z}\in C_{0}^{\infty}(\R^{n})$ satisfy the following.
\begin{align}
  & \bullet\quad  \supp\phi_{j}\subset \bigl\{\xi\in\R^{n}\mid \frac{5}{8}\,2^{j} \le |\xi| \le \frac{8}{5}\, 2^{j}\bigr\},\quad j\in\Z,  \label{phi-j-support-cond}\\ 
  & \bullet\quad \varphi(\xi)+\sum_{j=1}^{\infty} \phi_{j}(\xi)=1,\quad \xi\in\R^{n},
    \notag\\
  & \bullet\quad \sum_{j\in \Z} \phi_{j}(\xi)=1,\quad \xi\in\R^{n}\setminus \{0\},
    \notag\\
  & \bullet\quad \phi_{j}\phi_{k}=0 \quad\text{if}\quad |j-k|\ge 2.
    \notag
\end{align}
Now we define
\[
  P_{j}f=\cF^{-1}\Sb{\phi_{j}\wh{f}\,},\qquad 
  P_{\le j}f=\cF^{-1}\Sb{\varphi_{j}\wh{f}\,},\qquad j\in\Z.
\]
For pair of functions $f$, $g$, we define
\[
  (fg)_{LH}=\sum_{j\in\Z} P_{\le j-4}f\, P_{j}g,\quad
  (fg)_{HL}=\sum_{j\in\Z} P_{j}f\, P_{\le j-4}g,\quad
  (fg)_{HH}=\sum_{\abs{j-k}\le 3}P_{j}f\, P_{k}g.
\]
so that
\begin{equation}\label{paraproduct-decomp}
  fg=(fg)_{LH}+(fg)_{HL}+(fg)_{HH}.
\end{equation}
To distinguish the resonant interaction, we also use
\[
  (fg)_{RL}=\sum_{\abs{j}\le 2}P_{j}f\, P_{\le j-4}g,\quad
  (fg)_{XL}=\sum_{\abs{j}\ge 3}P_{j}f\, P_{\le j-4}g
\]
so that
\begin{equation}\label{resonant-decomp}
  (fg)_{HL}=(fg)_{RL} + (fg)_{XL},
\end{equation}
and similarly $(fg)_{LR}$, $(fg)_{LX}$.
When we regard these expressions as the bilinear Fourier multipliers, we write the symbols as follows,
\begin{align*}
  (fg)_{LH}&=\cF^{-1}\SB{(2\pi)^{-\frac{3}{2}} \int \cP_{LH}(\xi-\eta,\eta)\wh{f}(\xi-\eta)\,\wh{g}(\eta)\,d\eta}\\
  &=(2\pi)^{-3}\iint e^{ix\cdot (\xi +\eta)}\,
    \cP_{LH}(\xi,\eta)\wh{f}(\xi)\,\wh{g}(\eta)\,d\xi d\eta,
\end{align*}
where
\[
  \cP_{LH}(\xi,\eta)=\sum_{j\in\Z}\vp_{\le j-4}(\xi)\phi_{j}(\eta).
\]
We also use the notation like
\[
  (fg)_{LH+HL}=(fg)_{LH}+(fg)_{HL}.
\]

\subsection{The Schr\"odinger part}

Applying the Fourier transform to the integral equation, we have
\begin{align*}
  \widehat{u}(t,\xi)& = e^{it|\xi|^{2}}\widehat{u}_{0}(\xi)
    -i\int_{0}^{t}e^{i(t-\tau)|\xi|^{2}}\mathcal{F}[Nu](\tau,\xi)\,d\tau\\
  &=e^{it|\xi|^{2}}\widehat{u}_{0}(\xi)
    -i\int_{0}^{t}e^{i(t-\tau)|\xi|^{2}}\mathcal{F}[(Nu)_{XL}](\tau,\xi)\,d\tau\\
  &\qquad \qquad {}-i\int_{0}^{t}e^{i(t-\tau)|\xi|^{2}}\mathcal{F}[(Nu)_{RL+LH+HH}](\tau,\xi)\,d\tau.
\end{align*}
Then, for the second term on the right hand side, we first rewrite as follows.
\begin{align*}
  &\int_{0}^{t}e^{i(t-\tau)|\xi|^{2}}\mathcal{F}[(Nu)_{XL}](\tau,\xi)\,d\tau\\
  &=(2\pi)^{-\frac{3}{2}}\int_{0}^{t}\int e^{i(t-\tau)|\xi|^{2}}\mathcal{P}_{XL}(\xi-\eta,\eta)\wh{N}(\tau,\xi-\eta)\wh{u}(\tau,\eta)\,d\tau d\eta\\
  &=(2\pi)^{-\frac{3}{2}} e^{it \abs{\xi}^{2}} \int_{0}^{t}\int e^{i\tau \omega(\xi-\eta,\eta)}\mathcal{P}_{XL}(\xi-\eta,\eta)e^{-i\tau \abs{\xi-\eta}}\wh{N}(\tau,\xi-\eta)e^{-i\tau \abs{\eta}^{2}}\wh{u}(\tau,\eta)\,d\tau d\eta,
\end{align*}
where
\[
  \omega (\xi,\eta)=-\abs{\xi+\eta}^{2} + \abs{\xi} + \abs{\eta}^{2}.
\]
Then, we apply the integration by parts in the time variable by using the equations
\begin{equation}\label{fourier-differential-equation}
\begin{aligned}
  &e^{i\tau\omega}=\frac{1}{i\,\omega}\partial_{\tau}e^{i\tau\omega},\qquad
  \d_{\tau}\cb{e^{-i\tau\abs{\xi}^{2}}\wh{u}(\tau,\xi)}=-i\, e^{-i\tau\abs{\xi}^{2}} \cF\sqb{Nu}(\xi),\\
  &\d_{\tau}\cb{e^{-i\tau\abs{\xi}}\wh{N}(\tau,\xi)}=-i\, e^{-i\tau\abs{\xi}}\abs{\xi}^{\gamma} \cF\sqb{u\,\ol{u}}(\xi)
\end{aligned}
\end{equation}
to obtain
\begin{align*}
  &\int_{0}^{t}\int \frac{1}{i\,\omega(\xi-\eta,\eta)}\,\cb{\d_{\tau}e^{i\tau \omega(\xi-\eta,\eta)}}\mathcal{P}_{XL}(\xi-\eta,\eta)e^{-i\tau \abs{\xi-\eta}}\wh{N}(\tau,\xi-\eta)e^{-i\tau \abs{\eta}^{2}}\wh{u}(\tau,\eta)\,d\tau d\eta\\
  &=e^{-it\abs{\xi}^{2}} \int \frac{\cP_{XL}(\xi-\eta,\eta)}{i\, \omega(\xi-\eta,\eta)}\, \wh{N}(t,\xi-\eta)\wh{u}(t,\eta)\,d\eta
    -\int \frac{\cP_{XL}(\xi-\eta,\eta)}{i\, \omega(\xi-\eta,\eta)}\, \wh{N}(0,\xi-\eta)\wh{u}(0,\eta)\,d\eta\\
  &\qquad {}+i\int_{0}^{t}\int e^{i\tau \omega(\xi-\eta,\eta)}\, \frac{\mathcal{P}_{XL}(\xi-\eta,\eta)}{i\,\omega(\xi-\eta,\eta)}\, e^{-i\tau \abs{\xi-\eta}} \abs{\xi-\eta}^{\gamma} \cF\sqb{u\,\ol{u}}(\tau,\xi-\eta)\,e^{-i\tau \abs{\eta}^{2}}\wh{u}(\tau,\eta)\,d\tau d\eta\\
  &\qquad {}+i\int_{0}^{t}\int e^{i\tau \omega(\xi-\eta,\eta)}\, \frac{\mathcal{P}_{XL}(\xi-\eta,\eta)}{i\,\omega(\xi-\eta,\eta)}\, e^{-i\tau \abs{\xi-\eta}}\wh{N}(\tau,\xi-\eta)\,e^{-i\tau \abs{\eta}^{2}} \cF\sqb{Nu}(\tau,\eta)\,d\tau d\eta\\
  &=(2\pi)^{\frac{3}{2}}\Cb{-i\, e^{-it\abs{\xi}^{2}}\cF\Omega(N(t),u(t)) +i \cF\Omega(N(0),u(0))\\
  &\qquad {} + \int_{0}^{t} e^{-i\tau\abs{\xi}^{2}} \cF\Omega\Rb{\vD^{\gamma}\rb{u\,\ol{u}}, u}(\tau,\xi)\,d\tau
    + \int_{0}^{t} e^{-i\tau\abs{\xi}^{2}} \cF\Omega(N, Nu)(\tau,\xi)\,d\tau},
\end{align*}
where
\begin{equation}\label{bilinear-op-omega}
\begin{aligned}
  \Omega(f,g)(x)&=(2\pi)^{-\frac{3}{2}}\cF^{-1}\int  \frac{\cP_{XL}(\xi-\eta,\eta)}{\omega(\xi-\eta,\eta)}\, \wh{f}(\xi-\eta)\,\wh{g}(\eta)\,d\eta\\
  &=(2\pi)^{-3}\iint e^{ix\cdot(\xi+\eta)}\,\frac{\cP_{XL}(\xi,\eta)}{\omega(\xi,\eta)}\, \wh{f}(\xi)\,\wh{g}(\eta)\,d\xi d\eta.
\end{aligned}
\end{equation}
Here, we notice that on the support of $\cP_{XL}$,
\[
  \omega(\xi,\eta)=-\abs{\xi+\eta}^{2} + \abs{\xi} + \abs{\eta}^{2}
  \sim -\abs{\xi}^{2} + \abs{\xi},
\]
since $\abs{\eta}\ll \abs{\xi}$, and the resonant frequency $\abs{\xi}\sim 1$ is eliminated as a result of subtraction of the resonant interaction $(Nu)_{RL}$.
See section \ref{subsec-bilinear-FM} for the details.

Finally, we obtain
\begin{equation}\label{int-normal-schrodinger}
\begin{aligned}
  u(t)&=S(t)u_{0}  + S(t)\Omega(N_{0},u_{0}) - \Omega(N(t), u(t))\\
    &\qquad {} -i \int_{0}^{t} S(t-\tau) (Nu)_{RL+LH+HH}(\tau)\,d\tau\\
    &\qquad {} -i \int_{0}^{t} S(t-\tau) \Omega\Rb{\vD^{\gamma}\rb{u\,\ol{u}}, u}(\tau)\,d\tau
   -i \int_{0}^{t} S(t-\tau) \Omega(N, Nu)(\tau)\,d\tau.
\end{aligned}
\end{equation}

\subsection{The wave part}

Applying the Fourier transform to the integral equation, we have
\begin{align*}
  \widehat{N}(t,\xi)& = e^{it|\xi|}\widehat{N}_{0}(\xi)
    -i\int_{0}^{t}e^{i(t-\tau)|\xi|}\abs{\xi}^{\gamma}\mathcal{F}[u\,\ol{u}](\tau,\xi)\,d\tau\\
  &=e^{it|\xi|}\widehat{N}_{0}(\xi)
   -i\int_{0}^{t}e^{i(t-\tau)|\xi|}\abs{\xi}^{\gamma}\mathcal{F}[(u\,\ol{u})_{XL+LX}](\tau,\xi)\,d\tau\\
    &\qquad -i\int_{0}^{t}e^{i(t-\tau)|\xi|}\abs{\xi}^{\gamma}\mathcal{F}[(u\,\ol{u})_{HH+RL+LR}](\tau,\xi)\,d\tau.
\end{align*}
Then, for the second term on the right hand side, we first rewrite as follows.
\begin{align*}
  &\int_{0}^{t}e^{i(t-\tau)|\xi|}\abs{\xi}^{\gamma}\mathcal{F}[(u\,\ol{u})_{XL+LX}](\tau,\xi)\,d\tau\\
  &=(2\pi)^{-\frac{3}{2}} \abs{\xi}^{\gamma} \int_{0}^{t}\int e^{i(t-\tau)|\xi|}\mathcal{P}_{XL+LX}(\xi-\eta,\eta)\wh{u}(\tau,\xi-\eta)\wh{\ol{u}}(\tau,\eta)\,d\tau d\eta\\
  &=(2\pi)^{-\frac{3}{2}}\abs{\xi}^{\gamma} e^{it \abs{\xi}} \int_{0}^{t}\int e^{i\tau \theta(\xi-\eta,\eta)}\mathcal{P}_{XL+LX}(\xi-\eta,\eta)e^{-i\tau \abs{\xi-\eta}^{2}}\wh{u}(\tau,\xi-\eta)\cb{\ol{e^{-i\tau \abs{\eta}^{2}}\wh{u}(\tau,-\eta)}}\,d\tau d\eta,
\end{align*}
where
\[
  \theta (\xi,\eta)=-\abs{\xi+\eta} + \abs{\xi}^{2} - \abs{\eta}^{2}.
\]
Then, we apply the integration by parts in the time variable by using the equations \eqref{fourier-differential-equation} to obtain
\begin{align*}
  &\int_{0}^{t}\int \frac{1}{i\,\theta(\xi-\eta,\eta)}\,\cb{\d_{\tau}e^{i\tau \theta(\xi-\eta,\eta)}}\mathcal{P}_{XL+LX}(\xi-\eta,\eta)e^{-i\tau \abs{\xi-\eta}^{2}}\wh{u}(\tau,\xi-\eta)\cb{\ol{e^{-i\tau \abs{\eta}^{2}}\wh{u}(\tau,-\eta)}}\,d\tau d\eta\\
  &=e^{-it\abs{\xi}} \int \frac{\cP_{XL+LX}(\xi-\eta,\eta)}{i\, \theta(\xi-\eta,\eta)}\, \wh{u}(t,\xi-\eta)\ol{\wh{u}(t,-\eta)}\,d\eta\\
  &\qquad {}  -\int \frac{\cP_{XL+LX}(\xi-\eta,\eta)}{i\, \theta(\xi-\eta,\eta)}\, \wh{u}(0,\xi-\eta)\ol{\wh{u}(0,-\eta)}\,d\eta\\
  &\qquad {}+i\int_{0}^{t}\int e^{i\tau \theta(\xi-\eta,\eta)}\, \frac{\mathcal{P}_{XL+LX}(\xi-\eta,\eta)}{i\,\theta(\xi-\eta,\eta)}\, e^{-i\tau \abs{\xi-\eta}^{2}}  \cF\sqb{Nu}(\tau,\xi-\eta)\,\ol{e^{-i\tau \abs{\eta}^{2}}\wh{u}(\tau,-\eta)}\,d\tau d\eta\\
  &\qquad {}-i\int_{0}^{t}\int e^{i\tau \theta(\xi-\eta,\eta)}\, \frac{\mathcal{P}_{XL+LX}(\xi-\eta,\eta)}{i\,\theta(\xi-\eta,\eta)}\, e^{-i\tau \abs{\xi-\eta}^{2}}\wh{u}(\tau,\xi-\eta)\,\,\ol{e^{-i\tau \abs{\eta}^{2}} \cF\sqb{Nu}(\tau,-\eta)}\,d\tau d\eta\\
  &=(2\pi)^{\frac{3}{2}}\Cb{-i\, e^{-it\abs{\xi}}\cF\Theta(N(t),\ol{u(t)}) +i \cF\Theta(N(0),\ol{u(0)})\\
  &\qquad {} + \int_{0}^{t} e^{-i\tau\abs{\xi}} \cF\Theta\rb{Nu, \ol{u}}(\tau,\xi)\,d\tau
    - \int_{0}^{t} e^{-i\tau\abs{\xi}} \cF\Omega(u, \ol{Nu})(\tau,\xi)\,d\tau},
\end{align*}
where
\begin{equation}\label{bilinear-op-theta}
\begin{aligned}
  \Theta(f,g)(x)&=(2\pi)^{-\frac{3}{2}}\cF^{-1}\int  \frac{\cP_{XL+LX}(\xi-\eta,\eta)}{\theta(\xi-\eta,\eta)}\, \wh{f}(\xi-\eta)\,\wh{g}(\eta)\,d\eta\\
  &=(2\pi)^{-3}\iint e^{ix\cdot(\xi+\eta)}\,\frac{\cP_{XL+LX}(\xi,\eta)}{\theta(\xi,\eta)}\, \wh{f}(\xi)\,\wh{g}(\eta)\,d\xi d\eta.
\end{aligned}
\end{equation}
Finally, we obtain
\begin{equation}\label{int-normal-wave}
\begin{aligned}
  N(t)&=W(t)N_{0}  + W(t)\vD^{\gamma}\Theta(u_{0}, \ol{u_{0}}) 
    - \vD^{\gamma}\Theta(u(t), \ol{u(t)})\\
    &\qquad {} -i \int_{0}^{t} W(t-\tau)\vD^{\gamma} (u\,\ol{u})_{RL+LR+HH}(\tau)\,d\tau\\
    &\qquad {} -i \int_{0}^{t} W(t-\tau)\vD^{\gamma} \Theta\rb{Nu, \ol{u}}(\tau)\,d\tau\\
   &\qquad {} +i \int_{0}^{t} W(t-\tau) \vD^{\gamma} \Theta(u, \ol{Nu})(\tau)\,d\tau.
\end{aligned}
\end{equation}

\subsection{Main Results}

Before stating our main results, we summarize notation of function spaces which we use.
By using notation in section \ref{notation}, the norm of homogeneous Besov space is
\[
  \norm{f}_{\dot{B}^{s}_{q,2}}=\RB{\sum_{j\in\Z} 2^{2sj}\norm{P_{j}f}_{L^{q}}^{2}}^{\frac{1}{2}}.
\]
We only use the space which the third exponent is $2$, so we omit it below so that $\dot{B}^{s}_{q}=\dot{B}^{s}_{q,2}$.
The norm of the Sobolev space is
\[
  \norm{f}_{H^{s}}=\norm{\aD^{s} f }_{L^{2}}
  \simeq \RB{\sum_{j\in\Z} \ab{2^{j}}^{2s}\norm{P_{j}f}_{L^{2}}^{2}}^{\frac{1}{2}}.
\]
For the use of radial Strichartz estimates in section \ref{sec-radial-strichartz} we set
\[
  \frac{1}{q(\ve)}=\frac{1}{4}+\frac{\ve}{3}.
\]
We fix $0<\ve\ll 1$ so that
\[
  \frac{10}{3}< q(\ve) < 4 < q(-\ve),
\]
then $(2,q(\ve))$ is radial Strichartz admissible, and $(2,q(-\ve))$ is wave radial admissible.
(See Proposition \ref{radial-Strichartz} below.)
Since the space dimension is $3$, we frequently use the following embedding
\[
  H^{1}\subset \dot{B}^{\frac{1}{4}+\ve}_{q(\ve)} \subset L^{6},\qquad
  \dot{H}^{\frac{1}{2}}\subset \dot{B}^{-\frac{1}{4}-\ve}_{q(-\ve)}.
\]

Now we are in a position to state our main result.

\begin{theorem}\label{thm-1}
Let $\gamma\in [\frac{1}{2},1]$.
We  assume that $u_{0}$, $N_{0}$ are radial and satisfy $\norm{u_{0}}_{H^{1}}+\norm{N_{0}}_{H^{\frac{1-\gamma}{2}}}\le \rho$.
If $\rho>0$ is sufficiently small, then there exists a unique global solution
\begin{equation}\label{cond-conti}
  u\in C\rb{[0,\infty);H^{1}},\quad N\in C\rb{[0,\infty); H^{\frac{1-\gamma}{2}}}
\end{equation}
to  \eqref{int-normal-schrodinger}, \eqref{int-normal-wave} satisfying
\begin{equation}\label{cond-L2}
  \aD u\in L^{\infty}_{t}L^{2}_{x}\cap L^{2}_{t}\dot{B}^{\frac{1}{4}+\ve}_{q(\ve)},\quad
  \aD^{\frac{1-\gamma}{2}} N \in L^{\infty}_{t}L^{2}_{x}\cap L^{2}_{t}\dot{B}^{-\frac{1}{4}-\ve}_{q(-\ve)}.
\end{equation}

Moreover, there exists $u_{+}\in H^{1}$ and $N_{+}\in H^{\frac{1-\gamma}{2}}$ such that
\[
  \lim_{t\to\infty}\norm{u(t)-e^{-it\Delta}u_{+}}_{H^{1}}=0,\qquad
  \lim_{t\to\infty}\norm{N(t)-e^{it\vD}N_{+}}_{H^{\frac{1-\gamma}{2}}}=0.
\]
\end{theorem}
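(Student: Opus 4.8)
The plan is to construct the solution by a fixed-point argument in the space $X$ of pairs $(u,N)$ with
\[
  \norm{(u,N)}_{X}
  :=\norm{\aD u}_{L^{\infty}_{t}L^{2}_{x}}+\norm{\aD u}_{L^{2}_{t}\dot{B}^{\frac14+\ve}_{q(\ve)}}
  +\norm{\aD^{\frac{1-\gamma}{2}}N}_{L^{\infty}_{t}L^{2}_{x}}+\norm{\aD^{\frac{1-\gamma}{2}}N}_{L^{2}_{t}\dot{B}^{-\frac14-\ve}_{q(-\ve)}}<\infty ,
\]
and, writing $\Phi=(\Phi_{S},\Phi_{N})$ for the map whose components are the right-hand sides of \eqref{int-normal-schrodinger} and \eqref{int-normal-wave}, to show that $\Phi$ is a contraction on the ball $B_{R}=\cb{(u,N)\in X:\norm{(u,N)}_{X}\le R}$ with $R=2C\rho$. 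For the linear parts, the unitarity of $S(t),W(t)$ together with the radial Strichartz estimates (Proposition \ref{radial-Strichartz}) and the embeddings recorded above give $\norm{(S(t)u_{0},W(t)N_{0})}_{X}\lesssim\norm{u_{0}}_{H^{1}}+\norm{N_{0}}_{H^{(1-\gamma)/2}}\le\rho$.

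Next I would bound every nonlinear term of $\Phi$ by the estimates assembled in Sections 2 and 3. The boundary contributions $S(t)\Omega(N_{0},u_{0})$, $\Omega(N(t),u(t))$, $W(t)\vD^{\gamma}\Theta(u_{0},\ol{u_{0}})$ and $\vD^{\gamma}\Theta(u(t),\ol{u(t)})$ are handled by the bilinear Fourier multiplier estimates of Section \ref{subsec-bilinear-FM}: on $\supp\cP_{XL}$ one has $\omega\sim-\abs{\xi}^{2}+\abs{\xi}$, the resonant shell $\abs{\xi}\sim1$ having been excised together with $(Nu)_{RL}$, and on $\supp\cP_{XL+LX}$ one has $\theta\sim\abs{\xi}^{2}-\abs{\xi}$; hence division by $\omega$ (resp.\ $\theta$) gains derivatives — two of them at high output frequency — which compensates both the loss $\vD^{\gamma}$ ($\gamma\le1$) and the gap between the $H^{1}$ regularity of $u$ and the $H^{(1-\gamma)/2}$ regularity of $N$, so that these terms land in $X$ with a quadratic bound. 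The Duhamel pieces carrying $(Nu)_{RL+LH+HH}$ and $\vD^{\gamma}(u\ol{u})_{RL+LR+HH}$, together with the cubic remainders $\Omega(\vD^{\gamma}(u\ol{u}),u)$, $\Omega(N,Nu)$, $\vD^{\gamma}\Theta(Nu,\ol{u})$ and $\vD^{\gamma}\Theta(u,\ol{Nu})$ produced by the normal form, are estimated by the nonlinear bounds of Section 3, whose proofs combine the radially improved Strichartz estimates of Section \ref{sec-radial-strichartz} (supplying extra integrability and a fractional regularity gain, precisely on the high–high and high-output-frequency interactions) with the derivative gain of $\Omega,\Theta$ and the low-frequency smoothing $\vD^{\gamma}$; the assumption $\gamma\ge\frac12$ is exactly the range in which all these estimates close. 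Summing up gives $\norm{\Phi(u,N)}_{X}\le C\rho+C(R^{2}+R^{3})$ on $B_{R}$, and since every estimate is multilinear the same reasoning applied to differences gives $\norm{\Phi(u,N)-\Phi(u',N')}_{X}\le C(R+R^{2})\norm{(u,N)-(u',N')}_{X}$. Taking $R=2C\rho$ and then $\rho$ small makes $\Phi:B_{R}\to B_{R}$ a contraction, producing a unique fixed point $(u,N)\in B_{R}$; uniqueness within the whole class \eqref{cond-conti}–\eqref{cond-L2} follows from the same difference estimates on finite time intervals. Finally $u\in C([0,\infty);H^{1})$ and $N\in C([0,\infty);H^{(1-\gamma)/2})$ are read off the integral equations: the free evolutions are strongly continuous, the time integrals are continuous by dominated convergence using the integrability just established, and the boundary bilinear terms are continuous by the fixed-time bilinear estimate of Section 2 together with the continuity of $u,N$ in weaker norms.

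For the scattering statement, I would apply $S(-t)$ to \eqref{int-normal-schrodinger} and $W(-t)$ to \eqref{int-normal-wave}. Running the nonlinear estimates above on time-tails $[t_{1},t_{2}]$ and using the finiteness of the global norms in \eqref{cond-L2}, each Duhamel integral is Cauchy in $H^{1}$ (resp.\ $H^{(1-\gamma)/2}$) as $t\to\infty$, so one may set
\[
  u_{+}=u_{0}+\Omega(N_{0},u_{0})-i\int_{0}^{\infty}S(-\tau)\Rb{(Nu)_{RL+LH+HH}+\Omega(\vD^{\gamma}(u\ol{u}),u)+\Omega(N,Nu)}(\tau)\,d\tau\in H^{1},
\]
and, analogously, $N_{+}\in H^{(1-\gamma)/2}$. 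Then $u(t)-S(t)u_{+}=-\Omega(N(t),u(t))+i\,S(t)\!\int_{t}^{\infty}S(-\tau)(\cdots)\,d\tau$ and similarly for $N$; the integral tails tend to $0$ in the energy norm, so it remains to show the boundary terms $\Omega(N(t),u(t))$ and $\vD^{\gamma}\Theta(u(t),\ol{u(t)})$ vanish there as $t\to\infty$. Writing $a(t)=\norm{u(t)-S(t)u_{+}}_{H^{1}}$, $b(t)=\norm{N(t)-W(t)N_{+}}_{H^{(1-\gamma)/2}}$, the fixed-time bilinear estimate gives $a(t)+b(t)\lesssim\rho\,(a(t)+b(t))+\norm{\Omega(W(t)N_{+},S(t)u_{+})}_{H^{1}}+\norm{\vD^{\gamma}\Theta(S(t)u_{+},\ol{S(t)u_{+}})}_{H^{(1-\gamma)/2}}+o(1)$, so for $\rho$ small it suffices to prove the two remaining norms go to $0$; this is a stationary/non-stationary phase statement on the non-resonant regions $\supp\cP_{XL}$ and $\supp\cP_{XL+LX}$ (the combined phases $\abs{\xi-\eta}+\abs{\eta}^{2}$ and $\abs{\xi-\eta}^{2}-\abs{\eta}^{2}$ have no critical $\eta$ off a lower-dimensional set, the second being in fact affine in $\eta$ and hence giving rapid decay away from the output origin), proved first for Schwartz data and then in general by density using the uniform bilinear bound. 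I expect the main obstacle of the whole argument to be the nonlinear and bilinear-multiplier estimates of the second paragraph — making them close in the energy class and confirming that $\gamma\ge\frac12$ is enough for the cubic normal-form remainders — for which the precise size of $\omega,\theta$ on the frequency-localized regions and the radially improved Strichartz estimates are indispensable.
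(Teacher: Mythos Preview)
Your existence and uniqueness argument via contraction in the space $X$ is exactly what the paper does (Section~4), invoking Proposition~\ref{radial-Strichartz} together with Lemmas~\ref{lem-quadratic-1}--\ref{lem-cubic}, and your treatment of continuity in $H^{1}\times H^{(1-\gamma)/2}$ is also in the same spirit, though the paper is more explicit that the boundary-term contribution $\Omega(N(t),u(t))-\Omega(N(t'),u(t'))$ is bounded by $\eta\bigl(\norm{N(t)-N(t')}_{L^{2}}+\norm{u(t)-u(t')}_{H^{1}}\bigr)$ and is \emph{absorbed} into the left-hand side, rather than deduced from prior continuity in a weaker norm.

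For the scattering you take a genuinely different route. You reduce to showing that
\[
  \norm{\Omega\bigl(W(t)N_{+},S(t)u_{+}\bigr)}_{H^{1}}\to 0,\qquad
  \norm{\vD^{\gamma}\Theta\bigl(S(t)u_{+},\ol{S(t)u_{+}}\bigr)}_{H^{(1-\gamma)/2}}\to 0,
\]
by substituting the free asymptotics into the boundary terms, absorbing the differences (legitimate by Lemma~\ref{lem-boundary-1}), and then appealing to a dispersive/oscillatory-integral argument plus density. This can be made to work, but your sketch is not quite right as stated: for the $\Omega$ phase $\abs{\xi-\eta}+\abs{\eta}^{2}$ there \emph{is} a critical point $\eta=\tfrac12\,\widehat{\xi}$ for each $\xi$ (with nondegenerate Hessian on $\supp\cP_{XL}$), so one really needs a stationary-phase or dispersive-decay estimate rather than mere absence of critical points; one must also control the $\xi$-integration uniformly and check that the density step closes with only the uniform bilinear bound of Lemma~\ref{lem-boundary-1}. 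The paper avoids all of this: it shows directly (Lemma~\ref{lem-boundary-3}) that
\[
  t\mapsto\norm{\Omega(N(t),u(t))}_{H^{1}}\in L^{2}(0,\infty),\qquad
  t\mapsto\norm{\vD^{\gamma}\Theta(u(t),\ol{u(t)})}_{H^{(1-\gamma)/2}}\in L^{2}(0,\infty),
\]
and then proves these functions are bounded and uniformly continuous by establishing $u\in BUC([0,\infty);H^{1-\delta})$, $N\in BUC([0,\infty);H^{(1-\gamma)/2-\delta})$ from the integral equations (using the H\"older continuity $\norm{S(t)f-S(t')f}_{H^{1-\delta}}\lesssim\abs{t-t'}^{\delta/2}\norm{f}_{H^{1}}$ and the absorption trick again). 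An $L^{2}$ uniformly continuous function must tend to $0$, which finishes the argument without any oscillatory-integral analysis. Your approach gives more explicit decay information on the boundary terms for smooth data; the paper's approach is shorter, stays entirely within the estimates already used for existence, and sidesteps the stationary-phase issue for the $\Omega$ phase.
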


\medskip
\begin{remark}
The case $\gamma =1$ is the same one as \cite[Theorem 1.1]{guo-nakanishi_1}.
In their proof of the scattering, they just mentioned \eqref{appendix-boundary-limit} holds.
So, we give its proof in section \ref{sec-appendix}, because we need Lemma \ref{lem-boundary-4} additionally.
\end{remark}

\begin{remark}
The condition $\gamma \ge \frac{1}{2}$ essentially come from the estimate of the quadratic term in Lemma \ref{lem-quadratic-2} (1).
\end{remark}

\bigskip
The integral equations \eqref{int-normal-schrodinger}, \eqref{int-normal-wave}, which we show the existence of a solution, is not the usual one. 
Our second result states that solutions to the integral equations \eqref{int-normal-schrodinger}, \eqref{int-normal-wave} derived in Theorem \ref{thm-1} also satisfy the original system, which is not proved in \cite{guo-nakanishi_1} rigorously.

\begin{theorem}\label{thm-2}
Let $\gamma\in [\frac{1}{2},1]$.
We assume that $(u, N)$ is a radial solution to \eqref{int-normal-schrodinger}, \eqref{int-normal-wave} satisfying \eqref{cond-conti} and
\[
  \norm{\aD u}_{L^{\infty}_{t}L^{2}_{x}\cap L^{2}_{t}\dot{B}^{\frac{1}{4}+\ve}_{q(\ve)}}
  +\norm{\aD^{\frac{1-\gamma}{2}} N}_{L^{\infty}_{t}L^{2}_{x}\cap L^{2}_{t}\dot{B}^{-\frac{1}{4}-\ve}_{q(-\ve)}}\le \eta
\]
for sufficiently small $\eta>0$.
Then 
\begin{equation}\label{thm-2-c1}
  u\in C^{1}\rb{(0,\infty); H^{-1}},\quad N\in C^{1}\rb{(0,\infty); H^{-\frac{1+\gamma}{2}}}
\end{equation}
and satisfy \eqref{differential-equation}.
\end{theorem}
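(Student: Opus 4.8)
The plan is to show that the solution $(u,N)$ of the integral equations \eqref{int-normal-schrodinger}, \eqref{int-normal-wave} can be rewritten, on each fixed time interval, back into the form of the original Duhamel formula \eqref{integral-equation}, and then to differentiate that formula in $t$. The first task is therefore \emph{undoing the normal form reduction}: starting from \eqref{int-normal-schrodinger} I would re-examine the integration-by-parts identity used in the Schr\"odinger part, but now read from right to left. Concretely, the boundary terms $S(t)\Omega(N_0,u_0) - \Omega(N(t),u(t))$ together with the time integrals of $\Omega(\vD^\gamma(u\ol u),u)$ and $\Omega(N,Nu)$ must recombine, via the fundamental theorem of calculus applied to $\tau\mapsto e^{-i\tau|\xi|^2}\cF\Omega(N(\tau),u(\tau))(\xi)$, into the single term $-i\int_0^t S(t-\tau)(Nu)_{XL}(\tau)\,d\tau$. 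For this step to be rigorous I need enough regularity and integrability of $N$ and $u$ — supplied by \eqref{cond-conti} and the $L^2_t$ Strichartz bounds in the hypothesis — to justify that the map $\tau\mapsto \cF\Omega(N(\tau),u(\tau))$ is absolutely continuous in $\tau$ with the derivative given by the product rule, using the evolution identities \eqref{fourier-differential-equation} and the nonlinear estimates of section 3. The same must be done for the wave part \eqref{int-normal-wave}, recovering $-i\int_0^t W(t-\tau)\vD^\gamma(u\ol u)_{XL+LX}(\tau)\,d\tau$ from the $\Theta$-boundary and $\Theta$-integral terms. Once this is accomplished, $(u,N)$ satisfies \eqref{integral-equation} verbatim.

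Next I would establish the regularity claim \eqref{thm-2-c1}. Writing $u(t)=S(t)u_0 - \tfrac{i}{2}\int_0^t S(t-\tau)(Nu)(\tau)\,d\tau$, the free part $S(t)u_0$ is smooth in $t$ with values in any negative Sobolev space because $u_0\in H^1$ and $\Delta S(t)u_0 \in C([0,\infty);H^{-1})$; the Duhamel part $\int_0^t S(t-\tau)(Nu)(\tau)\,d\tau$ is $C^1$ in $t$ with values in $H^{-1}$ provided $Nu \in C((0,\infty);H^{-1})$ and $\Delta$ of the Duhamel integral lies in $C((0,\infty);H^{-1})$, i.e.\ provided the forcing $Nu$ is continuous into $H^{-1}$ and the solution $u$ itself is continuous into $H^1$ so that $\Delta u\in C((0,\infty);H^{-1})$. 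The product $Nu$ with $u\in C([0,\infty);H^1)\subset C([0,\infty);L^6)$ and $N\in C([0,\infty);H^{\frac{1-\gamma}{2}})$ is controlled by the bilinear/product estimates already collected in section 3 (the paraproduct pieces $(Nu)_{LH}$, $(Nu)_{HL}$, $(Nu)_{HH}$ are each estimated there), yielding $Nu\in C((0,\infty);H^{-1})$; similarly $\vD^\gamma(u\ol u)\in C((0,\infty);H^{-\frac{1+\gamma}{2}})$ using $\gamma\le 1$ and $u\ol u\in C((0,\infty);L^3)\subset C((0,\infty);H^{-\frac12})$ together with $\vD^\gamma$ mapping $H^{-\frac12}\to H^{-\frac12-\gamma}\subset H^{-\frac{1+\gamma}{2}}$ when $\gamma\le 1$. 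Then differentiating the Duhamel formulas gives
\begin{equation*}
  i\partial_t u = -\Delta u + \tfrac12(Nu), \qquad i\partial_t N = -\vD N + \vD^\gamma(u\ol u)
\end{equation*}
(with the $\ol N u$ term restored as remarked after \eqref{integral-equation}), which is exactly \eqref{differential-equation}, and the right-hand sides are continuous in $t$ into $H^{-1}$ and $H^{-\frac{1+\gamma}{2}}$ respectively, giving \eqref{thm-2-c1}.

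The main obstacle I anticipate is the first step — the rigorous \emph{reversal} of the normal-form integration by parts — because the operators $\Omega$ and $\Theta$ carry the singular denominators $1/\omega$ and $1/\theta$, and although on the support of $\cP_{XL}$ one has $\omega(\xi,\eta)\sim -|\xi|^2+|\xi|$ bounded away from zero (the resonance $|\xi|\sim1$ having been excised into the $RL$ piece), one must still check that $\Omega(N(\tau),u(\tau))$ is differentiable in $\tau$ as an $H^{-1}$-valued (or better) function and that no boundary contribution is lost at $\tau=0$ or in passing limits. This requires combining the multiplier bound for $\Omega$, $\Theta$ from section \ref{subsec-bilinear-FM} with the evolution equations \eqref{fourier-differential-equation} and dominated convergence to interchange $\partial_\tau$ with the $\eta$-integral; the bookkeeping of which nonlinear term lands where (e.g.\ $\Omega(\vD^\gamma(u\ol u),u)$ versus $\Omega(N,Nu)$) must exactly match the cross terms produced when differentiating the product $\wh N(\tau)\,\wh u(\tau)$. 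A secondary, milder difficulty is that \eqref{thm-2-c1} asserts $C^1$ only on the open interval $(0,\infty)$ rather than $[0,\infty)$: this is natural because at $t=0$ the forcing need not be continuous up to the endpoint in the relevant negative-regularity space without extra assumptions, so I would simply carry out all the continuity-in-time arguments on compact subintervals $[\delta,T]\subset(0,\infty)$ and note the uniformity in $\delta$ is not needed.
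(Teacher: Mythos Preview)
Your proposal contains a circular step. To ``reverse'' the normal-form integration by parts you invoke the evolution identities \eqref{fourier-differential-equation}, namely $\partial_\tau\{e^{-i\tau|\xi|^2}\widehat u(\tau,\xi)\}=-ie^{-i\tau|\xi|^2}\cF[Nu](\tau,\xi)$ and its analogue for $N$. But these identities are precisely the Fourier-side form of the differential equations \eqref{differential-equation} that you are trying to establish; at this stage you know only that $(u,N)$ solves the normal-form integral equations \eqref{int-normal-schrodinger}, \eqref{int-normal-wave}, not that $i\partial_t u-\Delta u=Nu$. So when you differentiate $\tau\mapsto\Omega(N(\tau),u(\tau))$ by the product rule you obtain $\Omega(\partial_\tau N,u)+\Omega(N,\partial_\tau u)$, and you cannot yet identify $\partial_\tau u$, $\partial_\tau N$ with the right-hand sides of \eqref{fourier-differential-equation}. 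The regularity and multiplier issues you flag are real but secondary; the missing idea is how to break this circle.

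The paper proceeds in the opposite order. First it shows directly from the normal-form equations that $u\in AC((0,T);H^{-1})$ and $N\in AC((0,T);H^{-\frac{1+\gamma}{2}})$: the boundary term obeys, by Lemma~\ref{lem-boundary-1},
\[
\norm{\Omega(N(t),u(t))-\Omega(N(t'),u(t'))}_{H^{-1}}
\lesssim\eta\Bigl(\norm{N(t)-N(t')}_{H^{-\frac{1+\gamma}{2}}}+\norm{u(t)-u(t')}_{H^{-1}}\Bigr),
\]
and with $\eta$ small this is absorbed into the left side of the total-variation estimate. Once absolute continuity is known, the a.e.\ derivatives exist; one then \emph{defines} residuals $R_S=i\partial_t u-\Delta u-Nu$, $R_W=i\partial_t N+\vD N-\vD^\gamma|u|^2$, differentiates the normal-form equations, and substitutes $i\partial_t u=\Delta u+Nu+R_S$, $i\partial_t N=-\vD N+\vD^\gamma|u|^2+R_W$ into the product rule for $\Omega(N(t),u(t))$ and $\Theta(u(t),\overline{u(t)})$. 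The ``expected'' pieces cancel against the other terms (this is the legitimate inverse of the derivation), leaving the closed linear system $R_S=\Omega(R_W,u)+\Omega(N,R_S)$, $R_W=\Theta(R_S,\overline u)+\Theta(u,\overline{R_S})$. Smallness of $\norm{u}_{H^1}$ and $\norm{N}_{H^{\frac{1-\gamma}{2}}}$ then forces $R_S=R_W=0$ a.e., after which the original Duhamel formula and the $C^1$ regularity follow. Note that the smallness hypothesis on $\eta$ is used essentially in both steps, not just for convenience.
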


\section{Basic estimates}

In this section, we summarize the estimates which is required to the proof of theorems.

\subsection{Radial Strichartz estimates}\label{sec-radial-strichartz}

As in Guo-Nakanishi \cite{guo-nakanishi_1}, we employ the radial Strichartz estimates, which is proved in \cite{guo-wang_2014}.

\begin{proposition}\label{radial-Strichartz}
Let the space dimension $n=3$.
We assume $f(x)$ and $F(t,x)$ are radial in space variables.

\medskip
\noindent
(1) If $(p,q)$ and $(\wt{p},\wt{q})$ both satisfy the radial Schr\"{o}diger-admissible condition:
\[
  p\in [2,\infty],\quad \frac{2}{p}+\frac{5}{q}<\frac{5}{2} \quad\text{or}\quad
  (p,q)=(\infty,2)
\]
and $\wt{p}>2$, then
\begin{align*}
  &\norm{e^{-it\Delta}f}_{L^{p}_{t}\dot{B}^{\frac{2}{p}+\frac{3}{q}-\frac{3}{2}}_{q,2}}
    \lesssim \norm{f}_{L^{2}},\\
  &\Norm{\int_{0}^{t}e^{-i(t-\tau)\Delta}F(\tau)\,d\tau}_{L^{p}_{t}\dot{B}^{\frac{2}{p}+\frac{3}{q}-\frac{3}{2}}_{q,2}}
    \lesssim \norm{F}_{L^{\tilde{p}'}_{t}\dot{B}^{\frac{3}{2}-\frac{2}{\tilde{p}}-\frac{3}{\tilde{q}}}_{\tilde{q}',2}}.
\end{align*}

\medskip
\noindent
(2) If $(p,q)$ and $(\wt{p},\wt{q})$ both satisfy the radial wave-admissible condition:
\[
  p\in [2,\infty],\quad \frac{1}{p}+\frac{2}{q}<1 \quad\text{or}\quad
  (p,q)=(\infty,2)
\]
and $\wt{p}>2$, then
\begin{align*}
  &\norm{e^{it\vD} f}_{L^{p}_{t}\dot{B}^{\frac{1}{p}+\frac{3}{q}-\frac{3}{2}}_{q,2}}
    \lesssim \norm{f}_{L^{2}},\\
  &\Norm{\int_{0}^{t}e^{i(t-\tau)\vD}F(\tau)\,d\tau}_{L^{p}_{t}\dot{B}^{\frac{1}{p}+\frac{3}{q}-\frac{3}{2}}_{q,2}}
    \lesssim \norm{F}_{L^{\tilde{p}'}_{t}\dot{B}^{\frac{3}{2}-\frac{1}{\tilde{p}}-\frac{3}{\tilde{q}}}_{\tilde{q}',2}}.
\end{align*}
\end{proposition}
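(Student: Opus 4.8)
\medskip
The plan is to reduce the estimates to a single Littlewood--Paley block by scaling, prove an improved dispersive bound for the radially localized propagator, and then run the classical $TT^{*}$/Christ--Kiselev argument. Since the norm $\norm{\cdot}_{L^{p}_{t}\dot B^{s}_{q,2}}$ is by definition the $\ell^{2}_{j}$-aggregate of the $L^{p}_{t}L^{q}_{x}$-norms of the pieces $P_{j}(\cdot)$, and $\norm{f}_{L^{2}}^{2}\simeq\sum_{j\in\Z}\norm{P_{j}f}_{L^{2}}^{2}$, it suffices to prove the single-block estimates $\norm{e^{-it\Delta}P_{0}f}_{L^{p}_{t}L^{q}_{x}}\lesssim\norm{f}_{L^{2}}$ and its inhomogeneous analogue for radial $f$. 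Parabolic scaling $x\mapsto 2^{-j}x,\ t\mapsto 2^{-2j}t$ (resp.\ $x\mapsto 2^{-j}x,\ t\mapsto 2^{-j}t$ in the wave case) then transplants these to the block at frequency $2^{j}$ with precisely the scaling-critical Besov exponent $\frac{2}{p}+\frac{3}{q}-\frac{3}{2}$ (resp.\ $\frac{1}{p}+\frac{3}{q}-\frac{3}{2}$), and square-summing over $j$ recovers the full estimates; radiality is preserved under scaling.

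Second, the heart of the matter is a dispersive estimate for $e^{-it\Delta}P_{0}$ (resp.\ $e^{it\vD}P_{0}$) on \emph{radial} functions that improves on the classical $|t|^{-n/2}$ (resp.\ $|t|^{-(n-1)/2}$) rate. In polar coordinates one writes $e^{-it\Delta}P_{0}f(x)=\int_{0}^{\infty}K_{t}(|x|,\rho)\,f(\rho)\,\rho^{n-1}\,d\rho$, where the kernel $K_{t}(r,\rho)$ is an oscillatory integral over $\xi\in\R^{n}$ of $e^{it|\xi|^{2}}\phi(\xi)$ paired with the spherical factors attached to the radii $r$ and $\rho$. Invoking the decay $|\wh{d\sigma_{S^{n-1}}}(\xi)|\lesssim\ab{\xi}^{-(n-1)/2}$ of the Fourier transform of surface measure on each of the two spheres, and then stationary phase in the single remaining radial frequency variable, one gets a pointwise bound on $K_{t}$ that gains up to $n-1$ extra powers of decay shared among $t$, $r$, $\rho$. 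Converting this into an operator bound gives $\norm{e^{-it\Delta}P_{0}}_{L^{q'}_{\mathrm{rad}}\to L^{q}_{\mathrm{rad}}}\lesssim|t|^{-\beta(q)}$ with $\beta(q)$ strictly larger than the non-radial value $n\bigl(\frac12-\frac1q\bigr)$; it is exactly this enlarged decay range that yields the admissible conditions $\frac{2}{p}+\frac{5}{q}<\frac{5}{2}$ (resp.\ $\frac{1}{p}+\frac{2}{q}<1$), the strict inequality reflecting that $\beta(q)$ is needed only slightly above the borderline.

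Third, with the dispersive bound one interpolates against the trivial $L^{2}\to L^{2}$ estimate to obtain, for every admissible $(p,q)$, a decay $\norm{e^{-it\Delta}P_{0}}_{L^{q'}\to L^{q}}\lesssim|t|^{-2/p}$ (and likewise in the wave case) with $2/p<1$ thanks to the open condition. The $TT^{*}$ identity then reduces the untruncated inhomogeneous estimate $\norm{\int_{\R}e^{-i(t-\tau)\Delta}P_{0}F(\tau)\,d\tau}_{L^{p}_{t}L^{q}_{x}}\lesssim\norm{F}_{L^{\wt p'}_{t}L^{\wt q'}_{x}}$ to the Hardy--Littlewood--Sobolev inequality in $t$ (applicable because the decay exponent lies in $(0,1)$); square-summing over dyadic blocks upgrades $L^{q}_{x}$ to $\dot B^{s}_{q,2}$, and the homogeneous estimate follows as a special case. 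Finally the Christ--Kiselev lemma replaces $\int_{\R}$ by the retarded integral $\int_{0}^{t}$; this requires $\wt p'<p$, i.e.\ $\wt p>2$, which is precisely the hypothesis made and the reason $\wt p=2$ is excluded. The wave part runs identically with $|\xi|^{2}$ replaced by $|\xi|$ and the dispersion adjusted accordingly.

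I expect the genuine obstacle to be the improved kernel estimate of the second step: controlling the oscillatory integral $K_{t}(r,\rho)$ uniformly across the transition regions where its phase has degenerate or nearly coincident critical points --- near $r\approx\rho$, and near the caustic $r\approx 2|t|$ for Schr\"odinger (resp.\ the light cone $r\approx|t|$ for the wave) --- where the stationary-phase gain is weakest, and keeping careful track of how the $n-1$ surplus powers split between the dispersive variable $t$ and the spatial weights $r,\rho$. It is this balancing that pins down the sharp admissible range and forces the non-endpoint restrictions $\frac{2}{p}+\frac{5}{q}<\frac{5}{2}$, $\frac{1}{p}+\frac{2}{q}<1$ and $\wt p>2$.
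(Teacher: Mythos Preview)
The paper does not prove this proposition at all; it is quoted directly from Guo--Wang \cite{guo-wang_2014} and no argument is given. Your sketch is correct and is essentially the strategy of that reference: reduce by scaling to a unit-frequency block, establish an improved radial dispersive/kernel bound by exploiting the $\ab{r}^{-(n-1)/2}$ decay of the spherical Fourier transform on both the source and target radii together with stationary phase in the remaining radial variable, and then run the standard $TT^{*}$/Hardy--Littlewood--Sobolev/Christ--Kiselev package, the last step being exactly what forces the hypothesis $\wt p>2$.
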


\begin{remark}
When $p=2$, the radial Schr\"{o}dinger-admissible condition implies $q>\frac{10}{3}$, while the radial wave-admissible condition implies $q>4$.
So, we apply this theorem by setting $\frac{1}{q(\ve)}=\frac{1}{4}+\frac{\ve}{3}$ with $\ve >0$ which satisfy
\[
  \frac{10}{3}<q(\ve)<4<q(-\ve).
\]
Then, for example,
\[
  \norm{S(t)f}_{L^{2}_{t}\dot{B}^{\frac{1}{4}+\ve}_{q(\ve),2}}
  \lesssim \norm{f}_{L^{2}},\qquad
  \norm{W(t)f}_{L^{2}_{t}\dot{B}^{-\frac{1}{4}-\ve}_{q(-\ve),2}}
  \lesssim \norm{f}_{L^{2}},
\]
hold, and $\frac{1}{2}=\frac{1}{q(\ve)}+\frac{1}{q(-\ve)}$ which is used to treat the quadratic nonlinearity.
\end{remark}

\subsection{Bilinear Fourier multiplier}\label{subsec-bilinear-FM}

For $m\in L^{\infty}(\R^{n}\times\R^{n})$, we denote the bilinear Fourier multiplier as
\[
  T_{m}(f,g)(x)=(2\pi)^{-n}\int_{\R^{n}\times\R^{n}}
    e^{ix\cdot(\xi+\eta)}m(\xi,\eta)\wh{f}(\xi)\,\wh{g}(\eta)\,d\xi d\eta.
\]
We employ the following proposition to estimate the operators $\Omega(f,g)$, $\Theta(f,g)$ appeared in \eqref{int-normal-schrodinger}, \eqref{int-normal-wave}, respectively.

\begin{proposition}[{\cite[Lemma 3.5]{guo-nakanishi_1}}]\label{bilinear-Fourier-multiplier}
Let $m\in C^{[n+1]}\Rb{\rb{\R^{n}\setminus\cb{0}}\times \rb{\R^{n}\setminus\cb{0}}}$ satisfy
\begin{equation}\label{multiplier-cond}
  \abs{\d_{\xi}^{\alpha}\d_{\eta}^{\beta}m(\xi,\eta)}
    \le C_{\alpha\beta} \abs{\xi}^{-\abs{\alpha}} \abs{\eta}^{-\abs{\beta}},
    \quad \abs{\alpha},\ \abs{\beta}\le n+1.
\end{equation}
If $p$, $q$, $r\in [1,\infty]$ satisfy $\frac{1}{r}=\frac{1}{p}+\frac{1}{q}$, then
\[
  \norm{T_{m}(P_{k} f, P_{l} g)}_{L^{r}}\le C\norm{f}_{L^{p}}\norm{g}_{L^{q}},
  \quad k,\ l\in\Z.
\]
\end{proposition}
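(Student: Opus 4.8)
The plan is to follow the classical Coifman--Meyer scheme: expand the frequency-localized symbol in a Fourier series adapted to the dyadic boxes, observe that each Fourier mode acts on the bilinear form as a pair of translations, and then sum. First I would write $T_{m}(P_{k}f,P_{l}g)=T_{\wt m}(f,g)$ with $\wt m(\xi,\eta)=m(\xi,\eta)\phi_{j}$ localizations, i.e. $\wt m(\xi,\eta)=m(\xi,\eta)\phi_{k}(\xi)\phi_{l}(\eta)$. Since $\phi_{k}$ and $\phi_{l}$ are supported away from the origin (see \eqref{phi-j-support-cond}), $\wt m$ lies in $C^{[n+1]}$, is compactly supported, and $\supp\wt m\subset\cb{\abs{\xi}\sim 2^{k}}\times\cb{\abs{\eta}\sim 2^{l}}$; by \eqref{multiplier-cond} and the Leibniz rule,
\[
  \Abs{\d_{\xi}^{\alpha}\d_{\eta}^{\beta}\wt m(\xi,\eta)}\lesssim 2^{-k\abs{\alpha}}\,2^{-l\abs{\beta}},\qquad \abs{\alpha},\abs{\beta}\le n+1,
\]
with implicit constants depending only on the $C_{\alpha\beta}$ and on $\phi$.

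Next, since $\wt m$ is supported in a cube of side $\sim 2^{k}$ in the $\xi$ variables and of side $\sim 2^{l}$ in the $\eta$ variables, I would expand it in a Fourier series on that product of cubes,
\[
  \wt m(\xi,\eta)=\sum_{a,b\in\Z^{n}} c_{a,b}\, e^{i c_{0} 2^{-k} a\cdot\xi}\, e^{i c_{0} 2^{-l} b\cdot\eta}\qquad\text{on }\supp\wt m,
\]
for a fixed constant $c_{0}$. Writing $c_{a,b}$ as the integral of $\wt m$ against these exponentials and integrating by parts $n+1$ times in $\xi$ (along a coordinate where $a$ has a largest component) and likewise in $\eta$, the derivative bounds above together with $\abs{\supp_{\xi}\wt m}\sim 2^{nk}$ and $\abs{\supp_{\eta}\wt m}\sim 2^{nl}$ make every power of $2^{k}$ and $2^{l}$ cancel, yielding $\abs{c_{a,b}}\lesssim \ab{a}^{-(n+1)}\ab{b}^{-(n+1)}$ with a constant independent of $k$ and $l$. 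Since $n+1>n$, this gives $\sum_{a,b\in\Z^{n}}\abs{c_{a,b}}<\infty$.

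Substituting the expansion into $T_{\wt m}(f,g)$ and using $\phi_{k}\wh f=\wh{P_{k}f}$ and $\phi_{l}\wh g=\wh{P_{l}g}$, each Fourier mode turns the bilinear form into a product of translates,
\[
  T_{\wt m}(f,g)(x)=c\sum_{a,b\in\Z^{n}} c_{a,b}\,(P_{k}f)\rb{x+c_{0} 2^{-k} a}\,(P_{l}g)\rb{x+c_{0} 2^{-l} b}.
\]
By H\"older's inequality with $\frac{1}{r}=\frac{1}{p}+\frac{1}{q}$ and the translation invariance of the Lebesgue norms, the $L^{r}$ norm of each summand is at most $\norm{P_{k}f}_{L^{p}}\norm{P_{l}g}_{L^{q}}$; and since $P_{j}$ is convolution with $2^{jn}\psi(2^{j}\cdot)$ for the fixed Schwartz function $\psi=\cF^{-1}\phi$, whose $L^{1}$ norm does not depend on $j$, Young's inequality gives $\norm{P_{k}f}_{L^{p}}\lesssim\norm{f}_{L^{p}}$ and $\norm{P_{l}g}_{L^{q}}\lesssim\norm{g}_{L^{q}}$ for every $p,q\in[1,\infty]$. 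Summing over $a,b$ and using $\sum_{a,b}\abs{c_{a,b}}<\infty$ then yields the assertion.

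The only place the hypotheses enter sharply is the coefficient estimate in the second step, and this is where I expect the one genuine obstacle to lie: the number of derivatives, $n+1$, is exactly what makes $\sum_{a,b\in\Z^{n}}\ab{a}^{-(n+1)}\ab{b}^{-(n+1)}$ converge, and the fact that the decay $\abs{\xi}^{-\abs{\alpha}}\abs{\eta}^{-\abs{\beta}}$ in \eqref{multiplier-cond} matches the side lengths $2^{k}$, $2^{l}$ of the localization boxes is precisely what forces the bound on $\abs{c_{a,b}}$ to be uniform in $k$ and $l$ --- which is the whole content of the statement. Everything else (H\"older's inequality, the $L^{p}$-boundedness of the Littlewood--Paley projections, and the absolutely convergent summation) is routine.
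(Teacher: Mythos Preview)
The paper does not give a proof of this proposition; it is quoted verbatim from \cite[Lemma 3.5]{guo-nakanishi_1}. Your argument is the standard Coifman--Meyer Fourier-series proof and is correct, with one small presentational fix: once you have absorbed $\phi_{k}(\xi)\phi_{l}(\eta)$ into the symbol $\wt m$, the Fourier expansion of $\wt m$ only represents it on the chosen cube, so to substitute it into $T_{\wt m}(f,g)$ you need the integrand already supported there --- write $T_{\wt m}(f,g)=T_{\wt m}(\wt P_{k}f,\wt P_{l}g)$ with fattened projections $\wt P_{j}$ equal to the identity on $\supp\phi_{j}$ and supported in the cube, after which the translates in your displayed formula are of $\wt P_{k}f$ and $\wt P_{l}g$, and the remainder of your argument goes through unchanged.
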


\bigskip
By \eqref{bilinear-op-omega}, \eqref{bilinear-op-theta}, the multiplier of the operators $\vD\aD\Omega(f,g)$, $\vD\aD\Theta(f,g)$ are
\[
  m_{\Omega}(\xi,\eta)=\frac{\abs{\xi+\eta}\ab{\xi+\eta}}{\omega(\xi,\eta)}\,\cP_{XL}(\xi,\eta),\qquad
  m_{\Theta}(\xi,\eta)=\frac{\abs{\xi+\eta}\ab{\xi+\eta}}{\theta(\xi,\eta)}\,\cP_{XL+LX}(\xi,\eta),\
\]
respectively, where
\[
  \omega(\xi,\eta)=-\abs{\xi+\eta}^{2} + \abs{\xi} + \abs{\eta}^{2},\qquad
  \theta(\xi,\eta)=-\abs{\xi+\eta} + \abs{\xi}^{2} - \abs{\eta}^{2}.
\]
Below we first show $m_{\Omega}(\xi,\eta)$ satisfies \eqref{multiplier-cond}.
Since $\cP_{XL}=\sum_{\abs{j}\ge 3}\phi_{j}(\xi)\, \vp_{j-4}(\eta)$, it suffices to show the estimate holds for
\[
  m_{\Omega}^{j}(\xi,\eta)=\frac{\abs{\xi+\eta}\ab{\xi+\eta}}{\omega(\xi,\eta)}\,
    \phi_{j}(\xi)\, \vp_{j-4}(\eta).
\]
Note that for fixed $(\xi,\eta)$ there are at most three nonzero terms on the component of $\cP_{XL}$.

Here, we notice that $(\xi,\eta)\in \supp m^{j}_{\Omega}$ implies that 
\begin{equation}\label{HL-interaction}
  \frac{5}{8}\,2^{j}\le |\xi|\le \frac{8}{5}\,2^{j},\quad
  |\eta|\le \frac{8}{5}\,2^{j-4}<2^{j-3},\quad
  2^{j-1}<|\xi +\eta|<2^{j+1}.
\end{equation}
Then, if $j\ge 3$, we have
\[
  \abs{\omega(\xi,\eta)}\ge \abs{\xi+\eta}^{2} - \abs{\xi} - \abs{\eta}^{2}
    \ge 2^{2j-2}-\frac{8}{5}\,2^{j} -2^{2j-6}
    \ge \RB{\frac{1}{4}-\frac{1}{5}-\frac{1}{64}}\,2^{2j}.
\]
And if $j\le -3$, we have
\[
  \abs{\omega(\xi,\eta)}\ge \abs{\xi} -\abs{\xi+\eta}^{2} + \abs{\eta}^{2}
    \ge \frac{5}{8}\, 2^{j} - 2^{2j+2}
    \ge \RB{\frac{5}{8}-\frac{4}{8}}\,2^{j}.
\]
Therefore, we have
\begin{equation}\label{omega-est}
  \abs{m_{\Omega}^{j}(\xi,\eta)}\le C.
\end{equation}

The estimate of $\abs{\d_{\xi}^{\alpha}\d_{\eta}^{\beta}m(\xi,\eta)}$ can be done similarly.

\medskip
As for $m_{\Theta}(\xi,\eta)$, it suffices to show the estimate holds for
\[
  m_{\Theta_{XL}}^{j}(\xi,\eta)=\frac{\abs{\xi+\eta}\ab{\xi+\eta}}{\theta(\xi,\eta)}\,
    \phi_{j}(\xi)\, \vp_{j-4}(\eta).
\]
Since $(\xi,\eta)\in\supp  m_{\Theta_{XL}}^{j}$ implies \eqref{HL-interaction}, 
\[
  \abs{\theta(\xi,\eta)}\ge \abs{\xi}^{2}-\abs{\xi+\eta}-\abs{\eta}^{2}
  \ge \frac{25}{64}\, 2^{2j} - 2^{j+1} - 2^{2j-6}
  \ge \RB{\frac{25}{64}-\frac{1}{4}-\frac{1}{64}}2^{2j},
\]
provided that $j\ge 3$.
And if $j\le -3$, we have
\[
  \abs{\theta(\xi,\eta)}\ge \abs{\xi+\eta} -\abs{\xi}^{2} + \abs{\eta}^{2}
    \ge 2^{j-1}-\frac{64}{25}\, 2^{2j}
    \ge \RB{\frac{1}{2}-\frac{8}{25}}\,2^{j}.
\]
Therefore, we have
\begin{equation}\label{theta-est}
  \abs{m_{\Theta_{XL}}^{j}(\xi,\eta)}\le C.
\end{equation}
The estimate of  $\abs{\d_{\xi}^{\alpha}\d_{\eta}^{\beta}m(\xi,\eta)}$ can be done similarly.


\section{Estimates of the nonlinear terms}

We first state the estimate of the terms appeared in the paraproduct decomposition \eqref{paraproduct-decomp}, which we frequently use.

\begin{lemma}\label{paraproduct-est}
(1) For $\lambda\in\R$, $p\in [2,\infty)$,
\[
  \Norm{\vD^{\lambda} \rb{f g}_{HL}}_{L^{p}}
    \lesssim \RB{\sum_{j\in \Z} 2^{2\lambda j}\norm{ P_{j} f\cdot P_{\le j-4} g}_{L^{p}}^{2}}^{\frac{1}{2}}.
\]

\medskip
\noindent
(2) For $\lambda >0$, $p\in [2,\infty)$,
\[
  \Norm{\vD^{\lambda}\rb{f g}_{HH}}_{L^{p}}
    \lesssim  \sum_{\abs{l}\le 3}\sum_{j\in \Z} 2^{\lambda j} 
    \norm{P_{j}f\cdot P_{j+l}g}_{L^{p}}.
\]

\medskip
\noindent
(3) 
\[
  \norm{\rb{f g}_{HH}}_{L^{2}}
    \lesssim  \sum_{\abs{l}\le 3}\sum_{j\in \Z}
    \norm{P_{j}f\cdot P_{j+l}g}_{L^{2}}.  
\]
\end{lemma}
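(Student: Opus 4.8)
The plan is to prove the three estimates of Lemma \ref{paraproduct-est} by reducing each left-hand side to an $\ell^2$ (or $\ell^1$) sum over dyadic pieces and then exploiting almost-orthogonality in the frequency variable, using that the Fourier support of each summand is essentially a single dyadic annulus.

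\textbf{Part (1).} For the $HL$-paraproduct, note that $\vD^{\lambda}(fg)_{HL}=\sum_{j}\vD^{\lambda}(P_{j}f\cdot P_{\le j-4}g)$, and the product $P_{j}f\cdot P_{\le j-4}g$ has Fourier support in the annulus $\{\,2^{j-2}\lesssim|\zeta|\lesssim 2^{j+1}\,\}$; on that set $\vD^{\lambda}$ is, up to a Mikhlin-type multiplier, comparable to $2^{\lambda j}$. More precisely one writes $\vD^{\lambda}(P_{j}f\cdot P_{\le j-4}g)=2^{\lambda j}\,m_{j}(\nabla)(P_{j}f\cdot P_{\le j-4}g)$ where $m_{j}$ is a fixed bump-times-homogeneous symbol localized to $|\zeta|\sim 2^{j}$ with Mikhlin bounds uniform in $j$, so $\norm{\vD^{\lambda}(P_{j}f\cdot P_{\le j-4}g)}_{L^{p}}\lesssim 2^{\lambda j}\norm{P_{j}f\cdot P_{\le j-4}g}_{L^{p}}$. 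The key point is then that the functions $\{\,\vD^{\lambda}(P_{j}f\cdot P_{\le j-4}g)\,\}_{j}$ have finitely-overlapping dyadic frequency supports, so by the Littlewood--Paley square function estimate in $L^{p}$, $p\in[2,\infty)$ (equivalently, $\ell^2$-boundedness of frequency projections plus Minkowski/duality), the $L^{p}$ norm of the sum is dominated by $\bigl\|(\sum_{j}|\vD^{\lambda}(P_{j}f\cdot P_{\le j-4}g)|^{2})^{1/2}\bigr\|_{L^{p}}$, which, after commuting the $\ell^{2}$ sum inside via Minkowski's inequality (legitimate since $p\ge 2$), is bounded by $(\sum_{j}2^{2\lambda j}\norm{P_{j}f\cdot P_{\le j-4}g}_{L^{p}}^{2})^{1/2}$.

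\textbf{Parts (2) and (3).} For the $HH$-paraproduct the subtlety is that $(fg)_{HH}=\sum_{|l|\le 3}\sum_{j}P_{j}f\cdot P_{j+l}g$ and the product $P_{j}f\cdot P_{j+l}g$ has Fourier support only in the ball $\{\,|\zeta|\lesssim 2^{j+2}\,\}$, not in an annulus — so high output frequencies can be fed by arbitrarily higher input frequencies, and one cannot simply invoke the square function. For (2), with $\lambda>0$, one inserts a further Littlewood--Paley decomposition of the output, $\vD^{\lambda}(P_{j}f\cdot P_{j+l}g)=\sum_{k\le j+2}\vD^{\lambda}P_{k}(P_{j}f\cdot P_{j+l}g)$, estimates each with $\norm{\vD^{\lambda}P_{k}(\cdot)}_{L^{p}}\lesssim 2^{\lambda k}\norm{P_{j}f\cdot P_{j+l}g}_{L^{p}}$, and sums the geometric series in $k\le j+2$ (here $\lambda>0$ is exactly what makes $\sum_{k\le j+2}2^{\lambda k}\lesssim 2^{\lambda j}$ converge), then sums over $j$ and the finitely many $l$, yielding the stated $\ell^1$-in-$j$ bound. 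For (3), $\lambda=0$ and $p=2$: here one uses that $\norm{(fg)_{HH}}_{L^{2}}\le\sum_{|l|\le 3}\sum_{j}\norm{P_{j}f\cdot P_{j+l}g}_{L^{2}}$ directly by the triangle inequality — no orthogonality is needed or available in general, because the outputs are not frequency-disjoint — so the claim is immediate. (One could sharpen using orthogonality in the \emph{output} frequency, but the triangle-inequality bound is what is asserted.)

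\textbf{Main obstacle.} The only genuinely delicate point is part (1): one must be careful that $\vD^{\lambda}$ with negative $\lambda$ is well-defined and bounded on each dyadic piece despite being a singular multiplier at the origin — this is fine precisely because $P_{j}f\cdot P_{\le j-4}g$ is frequency-localized away from $0$ at scale $2^{j}$, so $\vD^{\lambda}$ acts as a benign Mikhlin multiplier there — together with justifying the interchange of $\vD^{\lambda}$ and the infinite sum and the use of the vector-valued Littlewood--Paley inequality in $L^{p}$; all of these are standard once the frequency-support bookkeeping in \eqref{phi-j-support-cond} is in hand. Parts (2) and (3) are essentially bookkeeping plus a convergent geometric series.
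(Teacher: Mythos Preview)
Your proposal is correct and follows essentially the same strategy as the paper: for (1) the paper uses the frequency localization $\supp\cF[P_{j}f\cdot P_{\le j-4}g]\subset\{2^{j-1}<|\xi|<2^{j+1}\}$ together with the Littlewood--Paley square function and Minkowski (since $p\ge 2$), and for (2) it inserts an output Littlewood--Paley decomposition and sums the geometric series in the output frequency using $\lambda>0$, exactly as you describe. For (3) your argument is in fact slightly simpler than the paper's --- the paper routes through the square function again before arriving at the $\ell^{1}$ bound, whereas you observe (correctly) that the plain triangle inequality already gives the stated estimate.
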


\begin{proof}
(1) For $j\in \Z$, considering \eqref{HL-interaction}, we have
\[
  \supp \cF\Sb{P_{j}f\cdot P_{\le j-4}g}\subset \cb{2^{j-1}< \abs{\xi}< 2^{j+1}}.
\]
Then, by the Littlewood-Paley theorem, we obtain
\begin{align*}
  \Norm{\vD^{\lambda}\rb{f g}_{HL}}_{L^{p}}^{2}
  & \lesssim \NM{\RB{\sum_{k\in\Z} \Abs{P_{k}\vD^{\lambda}\rb{f g}_{HL}}^{2}}^{\frac{1}{2}}}_{L^{p}}^{2}
  \le \sum_{k\in\Z} 2^{2\lambda k} \norm{P_{k} \rb{f g}_{HL}}_{L^{p}}^{2}\\
  &=\sum_{k\in\Z} 2^{2\lambda k} \NM{P_{k} \sum_{j\in \Z}P_{j}f\cdot P_{\le j-4}g}_{L^{p}}^{2}\\
  &  =\sum_{k\in\Z} 2^{2\lambda k} \NM{P_{k} \sum_{j=k-1}^{k+1} P_{j}f\cdot P_{\le j-4}g}_{L^{p}}^{2}\\
  &\lesssim \sum_{j\in \Z}\sum_{k=j-1}^{j+1} 2^{2\lambda k}
    \norm{ P_{j} f\cdot P_{\le j-4} g}_{L^{p}}^{2}.
\end{align*}
Thus, we have the desired estimate.

\medskip
\noindent
(2) For $j\in \Z$, $\abs{l}\le 3$, considering \eqref{phi-j-support-cond},
\[
  \supp \cF\Sb{P_{j}f\cdot P_{j+l}g}\subset \cb{\abs{\xi}< 2^{j+4}}.
\]
Then, by the Littlewood-Paley theorem, we obtain
\begin{align*}
  \Norm{\vD^{\lambda} \rb{f g}_{HH}}_{L^{p}}^{2}
  & \lesssim \sum_{k\in\Z} 2^{2\lambda k}\norm{P_{k} \rb{f g}_{HH}}_{L^{p}}^{2}\\
  & =\sum_{k\in\Z} 2^{2\lambda k} \NM{P_{k} \sum_{\abs{l}\le 3}\sum_{j\in Z}P_{j}f\cdot P_{j+l}g}_{L^{p}}^{2}\\
  & =\sum_{k\in\Z} 2^{2\lambda k} \NM{P_{k} \sum_{\abs{l}\le 3}\sum_{j\ge k-4}P_{j}f\cdot P_{j+l}g}_{L^{p}}^{2}\\
  &\lesssim \sum_{\abs{l}\le 3} \sum_{k\in Z}
    \RB{\sum_{j\ge k-4} 2^{\lambda k} \Norm{P_{j}f\cdot P_{j+l}g}_{L^{p}}}^{2}.
\end{align*}
Thus, since $\lambda >0$,
\begin{align*}
  \Norm{\vD^{\lambda} \rb{f g}_{HH}}_{L^{p}}
  &\le \sum_{\abs{l}\le 3}\sum_{j\in Z}\RB{ \sum_{k\le j+4} 2^{2\lambda k}\Norm{P_{j}f\cdot P_{j+l}g}_{L^{p}}^{2}}^{\frac{1}{2}}\\
  &\simeq \sum_{\abs{l}\le 3}\sum_{j\in Z} 2^{\lambda j}
    \norm{P_{j}f\cdot P_{j+l}g}_{L^{p}}.  
\end{align*}

\medskip
\noindent
(3) Following the proof of (2), we have
\begin{align*}
  \norm{\rb{f g}_{HH}}_{L^{2}}
  &\le \sum_{\abs{l}\le 3}\sum_{j\in Z}\RB{ \sum_{k\le j+4} 
    \Norm{P_{k}\rb{P_{j}f\cdot P_{j+l}g}}_{L^{2}}^{2}}^{\frac{1}{2}}\\
  &\le \sum_{\abs{l}\le 3}\sum_{j\in Z} 
    \Norm{\Rb{\sum_{k\in \Z} \abs{P_{k}\rb{P_{j}f\cdot P_{j+l}g}}^{2}}^{\frac{1}{2}}}_{L^{2}}\\
  &\lesssim \sum_{\abs{l}\le 3}\sum_{j\in Z}
    \norm{P_{j}f\cdot P_{j+l}g}_{L^{2}}.  \qedhere
\end{align*}
\end{proof}

\subsection{Quadratic terms}

The next lemma is the same as the one in Guo-Nakanishi \cite[Lemma 3.2]{guo-nakanishi_1}, but we give its proof here for reader's convenience.

\begin{lemma}\label{lem-quadratic-1}
Let $\frac{1}{q(\ve)}=\frac{1}{4}+\frac{\ve}{3}$.

\noindent
(1) For $\ve >0$,
\begin{align*}
  & \norm{(Nu)_{LH}}_{L^{1}_{t}H^{1}_{x}}
    \lesssim \norm{N}_{L^{2}_{t}\dot{B}^{-\frac{1}{4}-\ve}_{q(-\ve)}} 
    \norm{\aD u}_{L^{2}_{t}\dot{B}^{\frac{1}{4}+\ve}_{q(\ve)}},\\
  & \norm{(Nu)_{HH}}_{L^{1}_{t}H^{1}_{x}}
    \lesssim \norm{N}_{L^{2}_{t}\dot{B}^{-\frac{1}{4}-\ve}_{q(-\ve)}} 
    \norm{\aD u}_{L^{2}_{t}\dot{B}^{\frac{1}{4}+\ve}_{q(\ve)}}.
\end{align*}

\noindent
(2) For $\ve>0$, $\theta\in [0,1]$, $\frac{1}{a}=\frac{1}{2}-\frac{\theta}{2}$, $\frac{1}{b}=\frac{1}{q(\ve)}+\frac{\theta}{3}$,
\[
  \norm{\aD (Nu)_{RL}}_{L^{a'}_{t}\dot{B}^{\frac{3}{2}-\frac{2}{a}-\frac{3}{b}}_{b'}}
    \lesssim \norm{N}_{L^{2}_{t}\dot{B}^{-\frac{1}{4}-\ve}_{q(-\ve)}} 
    \norm{u}_{L^{\infty}_{t}L^{2}\cap L^{2}_{t}\dot{B}^{\frac{1}{4}+\ve}_{q(\ve)}}.
\] 
\end{lemma}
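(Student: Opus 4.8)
The plan is to prove the three bilinear estimates in Lemma~\ref{lem-quadratic-1} by combining the paraproduct decomposition estimates of Lemma~\ref{paraproduct-est} with H\"older's inequality in space and time, the embeddings $H^1\subset\dot B^{1/4+\ve}_{q(\ve)}$ and $\dot H^{1/2}\subset\dot B^{-1/4-\ve}_{q(-\ve)}$ recorded in Section~1.6, and the crucial identity $\tfrac12=\tfrac1{q(\ve)}+\tfrac1{q(-\ve)}$. The two time factors are always split as $L^1_t = L^2_t\cdot L^2_t$, which is why both norms on the right-hand side are $L^2_t$-based.

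For part~(1), first statement, I would start from Lemma~\ref{paraproduct-est}(1) applied with $\lambda = 1$ (or rather, work with the full $H^1$ norm by handling low frequencies separately; since $(Nu)_{LH}$ only involves frequencies $\gtrsim 1$ up to finitely many, $\norm{\cdot}_{H^1}\simeq\norm{\aD\,\cdot}_{L^2}\simeq$ the $\dot B^1_2$-type square sum). In the $LH$ interaction $P_{\le j-4}N\cdot P_j u$ the output frequency is $\sim 2^j$, so one power of $\aD$ lands on the high-frequency factor $u$; I would use H\"older with $\tfrac12 = \tfrac1{q(-\ve)}+\tfrac1{q(\ve)}$ to write $\norm{P_{\le j-4}N\cdot P_j(\aD u)}_{L^2_x}\le \norm{P_{\le j-4}N}_{L^{q(-\ve)}_x}\norm{P_j\aD u}_{L^{q(\ve)}_x}$, then bound $\norm{P_{\le j-4}N}_{L^{q(-\ve)}}\lesssim \sum_{k\le j-4}\norm{P_k N}_{L^{q(-\ve)}}$ and absorb the summation against the $2^{(1/4+\ve)j}$ weight on the $u$-side into the Besov norms via Young/Cauchy--Schwarz on the $\ell^2$ indices (the geometric gap $2^{-(1/4+\ve)(j-k)}$ gives summability). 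Finally apply $L^2_t$ H\"older in time. The $HH$ estimate is the same but uses Lemma~\ref{paraproduct-est}(2) with $\lambda=1>0$: there $P_j N\cdot P_{j+l}u$ has output frequency $\lesssim 2^j$, so the $2^j$ weight is shared; write $2^j = 2^{(3/4-\ve)j}2^{(1/4+\ve)j}$, note $-\tfrac14-\ve+\tfrac34-\ve<\tfrac34$ so the negative-regularity $N$-factor still controls its part after summing (here one uses that on each dyadic block $\norm{P_j N}_{L^{q(-\ve)}}\le 2^{(1/4+\ve)j}\cdot 2^{-(1/4+\ve)j}\norm{P_j N}_{L^{q(-\ve)}}$ and Cauchy--Schwarz in $j$).

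For part~(2), the $(Nu)_{RL}$ term, the key point is that $RL$ restricts the high factor $N$ to frequencies $\abs{j}\le 2$, i.e. $|\xi|\sim 1$, so there is no smallness or largeness gain from that factor's frequency — it is the genuinely resonant piece that cannot be treated by normal form and must instead be absorbed by a dual Strichartz norm. Here I would \emph{not} put it in $L^1_t H^1_x$ but rather estimate it directly in the dual Strichartz space $L^{a'}_t\dot B^{3/2-2/a-3/b}_{b'}$ that appears on the right of the inhomogeneous Schr\"odinger--Strichartz estimate in Proposition~\ref{radial-Strichartz}(1); the parameters $a,b$ are chosen via $\tfrac1a=\tfrac12-\tfrac\theta2$, $\tfrac1b=\tfrac1{q(\ve)}+\tfrac\theta3$ so that $(a,b)$ (the nonstarred exponents) sit in a range making $(\tilde a,\tilde b)=(a',\ldots)$ admissible for $\tilde p>2$. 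Concretely: $\aD(Nu)_{RL}$ has output frequency $\sim$ that of $u$ (since $N$ is at frequency $\sim1$ and $u$ is low, $|\xi|\le 2^{j-3}$ — actually here the low factor is $u$), so $\aD$ is harmless; then interpolate the $u$-factor between $L^\infty_t L^2_x$ and $L^2_t\dot B^{1/4+\ve}_{q(\ve)}$ with weight $\theta$ to land in $L^{2/\theta'}_t\dot B^{\theta(1/4+\ve)}_{\ldots}$ and pair with $N\in L^2_t\dot B^{-1/4-\ve}_{q(-\ve)}$ by H\"older in space (again using $\tfrac12=\tfrac1{q(\ve)}+\tfrac1{q(-\ve)}$, suitably interpolated) and in time. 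I would verify the exponent bookkeeping $\tfrac1{a'}=\tfrac12+\tfrac\theta2$, $\tfrac1{b'}=1-\tfrac1{q(\ve)}-\tfrac\theta3 = \tfrac1{q(-\ve)}+(\tfrac1{q(\ve)}-\tfrac1{q(\ve)})-\tfrac\theta3$ matches the product of the two factor-norms, and that the Besov regularity $\tfrac32-\tfrac2a-\tfrac3b$ on the left equals $-\tfrac14-\ve+\theta(\tfrac14+\ve)$ coming from the right after the $\aD$ and the interpolation — this arithmetic check is the main bookkeeping obstacle, but it is routine.

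The main genuine obstacle is not any single estimate but organizing the frequency summations cleanly: in the $LH$ and $RL$ cases one sums a low-frequency factor (with no decay in its own frequency, only cut off above by $2^{j-4}$ resp. $2^{-1}$) against a dyadic square-summable high factor, and one must be careful that the loss $\sum_{k\le j-4}\norm{P_kN}_{L^q}$ is controlled by $\norm{N}_{\dot B^{s}_q}$ — this works precisely because $s=-\tfrac14-\ve<0$ in part~(1) (geometric series in $k$) and because in part~(2) the $N$-side regularity index is also negative while the compensating positive power sits on $u$. I would state these summation lemmas inline as one-line Cauchy--Schwarz / Young arguments rather than belaboring them. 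The remaining steps are all direct applications of Lemma~\ref{paraproduct-est}, H\"older, and the fixed embeddings.
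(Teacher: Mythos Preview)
Your approach is essentially the same as the paper's: for part~(1) you apply Lemma~\ref{paraproduct-est}, H\"older with $\tfrac12=\tfrac1{q(\ve)}+\tfrac1{q(-\ve)}$, and Cauchy--Schwarz in the dyadic index, and for part~(2) you use that the output frequency of $(Nu)_{RL}$ is $\sim 1$ (this is the frequency of the \emph{high} factor $N$, not of $u$ as you first wrote) so that $\aD$ and the Besov regularity are harmless, then H\"older and interpolate $u$ between $L^\infty_t L^2_x$ and $L^2_t L^6_x\supset L^2_t\dot B^{1/4+\ve}_{q(\ve)}$. A few of your intermediate claims are off --- in the $HH$ case the correct split is to put the full weight $\ab{2^j}$ on $u$ to form $\aD u$ and \emph{then} Cauchy--Schwarz with $2^{\mp(1/4+\ve)j}$ (your split $2^j=2^{(3/4-\ve)j}2^{(1/4+\ve)j}$ leaves $2^{(3/4-\ve)j}$ on $N$, which is not controlled by $\dot B^{-1/4-\ve}_{q(-\ve)}$), the interpolated time exponent in~(2) is $2/\theta$ not $2/\theta'$, and $(Nu)_{LH}$ has output frequencies at all scales, not just $\gtrsim 1$ --- but these are minor bookkeeping slips and the overall strategy matches the paper.
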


\begin{remark}
In (2), $(a,b)$ is radial Schr\"{o}dinger admissible if $\theta\in (0,\frac{3}{8})$ and $\ve$ is sufficiently small.
In application to the proof of theorems, we take $\theta=\frac{3}{8}-\frac{5}{2}\,\ve$ so that $(a,b)$ defined above is radial Schr\"{o}dinger-admissible without any restriction on $\ve>0$.
\end{remark}

\begin{proof}
(1) Applying Lemma \ref{paraproduct-est}, we have
\begin{align*}
   \norm{\aD (Nu)_{LH}}_{L^{2}}
   &\lesssim \RB{\sum_{j\in \Z} \ab{2^{j}}^{2}\norm{ P_{\le j-4} N\cdot P_{j} u }_{L^{2}}^{2}}^{\frac{1}{2}}\\
    &\lesssim \RB{\sum_{j\in \Z} \ab{2^{j}}^{2} 
      \norm{ P_{\le j-4} N}_{L^{q(-\ve)}}^{2}
      \norm{P_{j} u }_{L^{q(\ve)}}^{2}}^{\frac{1}{2}}.
\end{align*}
Since
\begin{equation}\label{low-freq-est}
\begin{aligned}
  \norm{ P_{\le j-4} N}_{L^{q(-\ve)}} 
  &\le \sum_{k\le j-4} 2^{(\frac{1}{4}+\ve)k}\,2^{(-\frac{1}{4}-\ve)k}\norm{P_{k}N}_{L^{q(-\ve)}}\\
  &\le \RB{\sum_{k\le j-4} 2^{2(\frac{1}{4}+\ve)k}}^{\frac{1}{2}}
    \RB{\sum_{k\le j-4} 2^{2(-\frac{1}{4}-\ve)k}\norm{P_{k}N}_{L^{q(-\ve)}}^{2}}^{\frac{1}{2}}\\
  &=C 2^{(\frac{1}{4}+\ve)j}  \norm{N}_{\dot{B}^{-\frac{1}{4}-\ve}_{q(-\ve)}},
\end{aligned}
\end{equation}
we obtain the desired result.

\medskip
\noindent
Similarly, applying Lemma \ref{paraproduct-est}, we have
\begin{align*}
   \norm{\aD (Nu)_{HH}}_{L^{2}}
   &\lesssim  \sum_{\abs{l}\le 3}\sum_{j\in \Z} \ab{2^{j}} 
    \norm{P_{j}N\cdot P_{j+l}u}_{L^{2}}\\
   &\lesssim  \sum_{\abs{l}\le 3}\sum_{j\in \Z} \ab{2^{j}}
    \norm{P_{j}N}_{L^{q(-\ve)}} \norm{P_{j+l}u}_{L^{q(\ve)}}\\
  &\le \sum_{\abs{l}\le 3}\RB{\sum_{j\in \Z} 2^{2(-\frac{1}{4}-\ve)j}  \norm{P_{j}N}_{L^{q(-\ve)}}^{2}}^{\frac{1}{2}}
  \RB{\sum_{j\in \Z} 2^{2(\frac{1}{4}+\ve)j}  \norm{P_{j+l}\aD u}_{L^{q(\ve)}}^{2}}^{\frac{1}{2}}. 
\end{align*}
Thus, we obtain the desired estimate.

\medskip
\noindent
(2) Since $\supp \cF{(Nu)_{RL}}\subset \cb{ 2^{-3}\le \abs{\xi}\le 2^{3}}$,
\begin{align*}
  \norm{\aD (Nu)_{RL}}_{L^{p'}_{t}\dot{B}^{\frac{3}{2}-\frac{2}{p}-\frac{3}{q}}_{q'}}
  \lesssim \norm{(Nu)_{RL}}_{L^{p'}_{t}L^{q'}_{x}}
  \le  \sum_{\abs{j}\le 2}\norm{P_{j} N}_{L^{2}_{t}L^{q(-\ve)}_{x}} \norm{P_{\le j-4}u}_{L^{\frac{2}{\theta}}_{t}L^{\frac{6}{3-2\theta}}},
\end{align*}
where we used the relation $\frac{1}{p'}=\frac{1}{2}+\frac{\theta}{2}$, $\frac{1}{q'}=\frac{1}{q(-\ve)}+\frac{3-2\theta}{6}$.
And since $\frac{\theta}{2}=\frac{1-\theta}{\infty}+\frac{\theta}{2}$, $\frac{3-2\theta}{6}=\frac{1-\theta}{2}+\frac{\theta}{6}$, we have
\[
  \norm{P_{\le j-4}u}_{L^{\frac{2}{\theta}}_{t}L^{\frac{6}{3-2\theta}}}
  \lesssim \norm{u}_{L^{\frac{2}{\theta}}_{t}L^{\frac{6}{3-2\theta}}}
  \le \norm{u}_{L^{\infty}_{t}L^{2}_{x}}^{1-\theta}\norm{u}_{L^{2}_{t}L^{6}_{x}}^{\theta}
  \lesssim \norm{u}_{L^{\infty}_{t}L^{2}_{x}}
    +\norm{u}_{L^{2}_{t}\dot{B}^{\frac{1}{4}+\ve}_{q(\ve)}}.
\]
Therefore, we obtain the desired estimate.
\end{proof}

\smallskip
\begin{lemma}\label{lem-quadratic-2}
We set $\frac{1}{q(\ve)}=\frac{1}{4}+\frac{\ve}{3}$.

\noindent
(1) Let $\gamma\in [\frac{1}{2},1]$. 
For $\ve >0$,
\begin{align*}
  & \norm{\aD^{\frac{1-\gamma}{2}}\vD^{\gamma} (u v)_{HH}}_{L^{1}_{t}L^{2}_{x}}
    \lesssim \norm{u}_{L^{2}_{t}\dot{B}^{\frac{1}{4}+\ve}_{q(\ve)}} 
    \norm{\aD^{\frac{\gamma}{2}} v}_{L^{2}_{t}\dot{B}^{\frac{1}{4}+\ve}_{q(\ve)}}.
\end{align*}

\noindent
(2) Let $\gamma\in (0,1]$.
For $\ve>0$, $\theta\in [0,1]$, $\frac{1}{a}=\frac{1}{2}-\frac{\theta}{2}$, $\frac{1}{b}=\frac{1}{q(-\ve)}+\frac{\theta}{3}$,
\[
  \norm{\aD^{\frac{1-\gamma}{2}}\vD^{\gamma} (uv)_{RL}}_{L^{a'}_{t}\dot{B}^{\frac{3}{2}-\frac{1}{a}-\frac{3}{b}}_{b'}}
    \lesssim 
    \norm{ u}_{L^{\infty}_{t}L^{2}\cap L^{2}_{t}\dot{B}^{\frac{1}{4}+\ve}_{q(\ve)}}^{2}.
\] 
\end{lemma}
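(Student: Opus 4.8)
The plan is to prove both parts by reducing to the paraproduct estimates of Lemma~\ref{paraproduct-est} and then invoking the radial Strichartz embeddings $H^{1}\subset\dot B^{1/4+\ve}_{q(\ve)}\subset L^{6}$ and $\dot H^{1/2}\subset\dot B^{-1/4-\ve}_{q(-\ve)}$, exactly as in the proof of Lemma~\ref{lem-quadratic-1}. For part~(1), the frequencies of $u$ and $v$ in $(uv)_{HH}$ are comparable, say $\abs{j-k}\le 3$, and the output frequency $\abs{\xi+\eta}$ is $\lesssim 2^{j}$; so $\aD^{(1-\gamma)/2}\vD^{\gamma}$ contributes at most a factor $\ab{2^{j}}^{(1-\gamma)/2}2^{\gamma j}$. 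Applying Lemma~\ref{paraproduct-est}(2) (or (3) in the low-frequency regime) together with Bernstein and H\"older with $\tfrac12=\tfrac1{q(\ve)}+\tfrac1{q(\ve)}$ wait---here one needs $\tfrac12 = \tfrac1{q(\ve)}+\tfrac1{q(\ve)}$ is false; rather one uses $L^{2}_{x}\supset L^{q(\ve)/2}$-type bounds via H\"older: $\norm{P_{j}u\cdot P_{j+l}v}_{L^{2}_{x}}\le\norm{P_{j}u}_{L^{q(\ve)}}\norm{P_{j+l}v}_{L^{r}}$ with $\tfrac12=\tfrac1{q(\ve)}+\tfrac1r$. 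Since $q(\ve)>10/3$ gives $r<q(\ve)$ close to $q(\ve)$, one absorbs the loss using that $\aD^{\gamma/2}v$ is measured in $\dot B^{1/4+\ve}_{q(\ve)}$: the extra derivatives $\aD^{(1-\gamma)/2}\vD^{\gamma}=\aD^{(1-\gamma)/2}\vD^{\gamma/2}\cdot\vD^{\gamma/2}$ split as $2^{(1-\gamma)j/2+\gamma j/2}=2^{j/2}$ times $2^{\gamma j/2}$ roughly, and the surplus $2^{j/2}$ is exactly what converts $L^{q(\ve)}$ to $L^{2}$ by Bernstein on a ball of radius $2^{j}$ in dimension $3$ (where $\tfrac12-\tfrac1{q(\ve)} \approx \tfrac14$, not $\tfrac12$, so the Bernstein gain is $2^{3j(\tfrac12-\tfrac1{q(\ve)})}=2^{3j/4+\dots}$). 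I would organize the bookkeeping so that the total power of $2^{j}$ landing on each factor is precisely $\tfrac14+\ve$, then sum in $j$ by Cauchy--Schwarz and in $l$ over $\abs{l}\le3$, and sum the resulting $\ell^{1}_{t}$-in-time bound via H\"older $\tfrac11=\tfrac12+\tfrac12$ in $t$.

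For part~(2), the term $(uv)_{RL}$ is supported in $\cb{2^{-3}\le\abs{\xi+\eta}\le 2^{3}}$, so $\aD^{(1-\gamma)/2}\vD^{\gamma}$ acts as a bounded multiplier (up to the harmless $\vD^{\gamma}$ near frequency $\sim1$, where $\gamma>0$ makes it even better), and the Besov norm on the left collapses to an $L^{a'}_{t}L^{b'}_{x}$ norm on a fixed frequency annulus, exactly as in Lemma~\ref{lem-quadratic-1}(2). Then I would run the same H\"older splitting: write $(uv)_{RL}=\sum_{\abs j\le2}P_{j}u\cdot P_{\le j-4}v$, put $P_{j}u\in L^{2}_{t}L^{q(-\ve)}_{x}$--wait, here both factors are $u$, so I put one copy $P_{j}u$ in $L^{2}_{t}L^{6}_{x}\hookleftarrow L^{2}_{t}\dot B^{1/4+\ve}_{q(\ve)}$ restricted to $\abs{j}\le2$, and the low-frequency factor $P_{\le j-4}v=P_{\le j-4}u$ in $L^{2/\theta}_{t}L^{6/(3-2\theta)}_{x}$, then interpolate $\norm{u}_{L^{2/\theta}_{t}L^{6/(3-2\theta)}_{x}}\le\norm{u}_{L^{\infty}_{t}L^{2}_{x}}^{1-\theta}\norm{u}_{L^{2}_{t}L^{6}_{x}}^{\theta}$ and bound $L^{2}_{t}L^{6}_{x}$ by $L^{2}_{t}\dot B^{1/4+\ve}_{q(\ve)}$. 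The exponent relations $\tfrac1{a'}=\tfrac12+\tfrac\theta2$, $\tfrac1{b'}=\tfrac1{q(-\ve)}+\tfrac{3-2\theta}{6}$ are chosen precisely so that $\tfrac1{a'}=\tfrac12+\tfrac\theta2$ splits as $\tfrac12+\tfrac\theta2$ across the two time-exponents and $\tfrac1{b'}=\tfrac1{q(-\ve)}+\tfrac{3-2\theta}6$ splits correctly; since here both factors come from $u$ I instead use $\tfrac1{b'}=\tfrac1{q(\ve)}+\tfrac{3-2\theta}{6}$ type balancing and note $\tfrac1{q(-\ve)}$ vs $\tfrac1{q(\ve)}$ differ by $O(\ve)$, absorbed into the $\ve$-freedom.

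The main obstacle, and the reason the hypothesis $\gamma\ge\tfrac12$ appears (as the second remark after Theorem~\ref{thm-1} flags), is in part~(1): one must check that after placing $\aD^{(1-\gamma)/2}\vD^{\gamma}$ on the high--high output and redistributing as $\aD^{\gamma/2}$ onto $v$, the leftover derivative power on the $u$-factor does not exceed $\tfrac14+\ve$, i.e. that the Bernstein upgrade from $L^{q(\ve)}$ to $L^{2}$ costs at most the available budget. A short computation shows the net requirement is $\tfrac{1-\gamma}{2}+\gamma-\tfrac\gamma2 \le \tfrac12 + (\tfrac14+\ve)$ after accounting for the $3(\tfrac12-\tfrac1{q(\ve)})$ Bernstein gain, and this reduces to $\gamma\ge\tfrac12$ in the limit $\ve\to0$; for $\gamma<\tfrac12$ the high-frequency sum in $j$ fails to be square-summable. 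Everything else---the time integrability via H\"older in $t$, the finite sum over $\abs l\le3$, and the reduction of the Besov norms to Lebesgue norms on fixed annuli---is routine bookkeeping of the kind already carried out in Lemmas~\ref{paraproduct-est} and~\ref{lem-quadratic-1}, so I would present part~(1) in full detail and part~(2) by analogy.
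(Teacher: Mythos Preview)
Your overall strategy for both parts matches the paper's: Lemma~\ref{paraproduct-est}(2) followed by H\"older and Cauchy--Schwarz for part~(1), and the fixed-annulus reduction plus interpolation for part~(2). In part~(2) your self-corrected version---putting $P_{j}u$ into $L^{2}_{t}L^{q(\ve)}_{x}$ (note that the identity $\tfrac{1}{b'}=\tfrac{1}{q(\ve)}+\tfrac{3-2\theta}{6}$ is \emph{exact}, not an $O(\ve)$ fudge, since $1-\tfrac{1}{q(-\ve)}=\tfrac{1}{q(\ve)}+\tfrac{1}{2}$)---is precisely the paper's argument. For part~(1), the H\"older exponent $r$ you introduce with $\tfrac12=\tfrac1{q(\ve)}+\tfrac1r$ is simply $r=q(-\ve)>q(\ve)$ (not $r<q(\ve)$ as you wrote); the paper then uses Bernstein $\norm{P_{j}u}_{L^{q(-\ve)}}\lesssim 2^{2\ve j}\norm{P_{j}u}_{L^{q(\ve)}}$ to land both factors in $\dot B^{1/4+\ve}_{q(\ve)}$.

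The genuine gap is your derivation of the constraint $\gamma\ge\tfrac12$. Your displayed inequality $\tfrac{1-\gamma}{2}+\gamma-\tfrac{\gamma}{2}\le\tfrac12+(\tfrac14+\ve)$ has left side identically $\tfrac12$, so it is vacuous and cannot produce any restriction on $\gamma$; moreover the obstruction is at \emph{low} frequencies, not high. The correct mechanism in the paper is this: after stripping off $2^{(1/4\pm\ve)j}$ from each factor one is left with the weight $\ab{2^{j}}^{(1-\gamma)/2}\,2^{(\gamma-1/2)j}$, which must be bounded by $\ab{2^{j}}^{\gamma/2}$ (to become the $\aD^{\gamma/2}$ on $v$). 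For $j\ge0$ this is an equality; for $j<0$ one needs $2^{(\gamma-1/2)j}\le 1\sim\ab{2^{j}}^{\gamma/2}$, i.e.\ $\gamma-\tfrac12\ge0$. If $\gamma<\tfrac12$ the factor $2^{(\gamma-1/2)j}$ blows up as $j\to-\infty$, so the low-frequency sum diverges. You should replace your heuristic bookkeeping with this explicit inequality $2^{(\gamma-1/2)j}\le\ab{2^{j}}^{\gamma-1/2}$, valid precisely when $\gamma\ge\tfrac12$.
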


\begin{remark}
In (2), $(a,b)$ is radial wave admissible if $\theta\in (0,1]$ and $\ve$ is sufficiently small.
In application to the proof of theorems, we take $\theta=4\ve$ so that $(a,b)$ defined above is radial wave-admissible without any restriction on $\ve>0$.
\end{remark}

\begin{proof}
(1) Applying Lemma \ref{paraproduct-est}, we have
\begin{align*}
   \norm{\aD^{\frac{1-\gamma}{2}}\vD^{\gamma} (uv)_{HH}}_{L^{2}}
   &\lesssim  \sum_{\abs{l}\le 3}\sum_{j\in \Z} 
     \ab{2^{j}}^{\frac{1-\gamma}{2}}\,2^{\gamma j} 
    \norm{P_{j}u\cdot P_{j+l}v}_{L^{2}}\\
   &\lesssim  \sum_{\abs{l}\le 3}\sum_{j\in \Z} 
     \ab{2^{j}}^{\frac{1-\gamma}{2}}\,2^{(\gamma-\frac{1}{2}) j}\,
    2^{(\frac{1}{4}-\ve)j} \norm{P_{j}u}_{L^{q(-\ve)}} 
    2^{(\frac{1}{4}+\ve)j}\norm{P_{j+l}v}_{L^{q(\ve)}}\\
  &\lesssim \sum_{\abs{l}\le 3}\RB{\sum_{j\in \Z} 2^{2(\frac{1}{4}+\ve)j}  \norm{P_{j}u}_{L^{q(\ve)}}^{2}}^{\frac{1}{2}}
  \RB{\sum_{j\in \Z} 2^{2(\frac{1}{4}+\ve)j}  \norm{P_{j+l}\aD^{\frac{\gamma}{2}} u}_{L^{q(\ve)}}^{2}}^{\frac{1}{2}},
\end{align*}
where in the last inequality we used $2^{(\gamma-\frac{1}{2})j}\le \ab{2^{j}}^{\gamma -\frac{1}{2}}$, which holds when $\gamma\in [\frac{1}{2},1]$.
Thus, we obtain the desired estimate.

\medskip
\noindent
(2) Since $\supp \cF{(uv)_{RL}}\subset \cb{ 2^{-3}\le \abs{\xi}\le 2^{3}}$,
\begin{align*}
  \norm{\aD^{\frac{1-\gamma}{2}}\vD^{\gamma}  (uv)_{RL}}_{L^{p'}_{t}\dot{B}^{\frac{3}{2}-\frac{1}{p}-\frac{3}{q}}_{q'}}
  \lesssim \norm{(uv)_{RL}}_{L^{p'}_{t}L^{q'}_{x}}
  \le  \sum_{\abs{j}\le 2}\norm{P_{j} u}_{L^{2}_{t}L^{q(\ve)}_{x}} \norm{P_{\le j-4}v}_{L^{\frac{2}{\theta}}_{t}L^{\frac{6}{3-2\theta}}},
\end{align*}
where we used the relation $\frac{1}{p'}=\frac{1}{2}+\frac{\theta}{2}$, $\frac{1}{q'}=\frac{1}{q(\ve)}+\frac{3-2\theta}{6}$.
And since $\frac{\theta}{2}=\frac{1-\theta}{\infty}+\frac{\theta}{2}$, $\frac{3-2\theta}{6}=\frac{1-\theta}{2}+\frac{\theta}{6}$, we have
\[
  \norm{P_{\le j-4}v}_{L^{\frac{2}{\theta}}_{t}L^{\frac{6}{3-2\theta}}}
  \lesssim \norm{v}_{L^{\frac{2}{\theta}}_{t}L^{\frac{6}{3-2\theta}}}
  \le \norm{v}_{L^{\infty}_{t}L^{2}_{x}}^{1-\theta}\norm{v}_{L^{2}_{t}L^{6}_{x}}^{\theta}
  \lesssim \norm{v}_{L^{\infty}_{t}L^{2}_{x}}
    +\norm{v}_{L^{2}_{t}\dot{B}^{\frac{1}{4}+\ve}_{q(\ve)}}.
\]
Therefore, we obtain the desired estimate.
\end{proof}

\subsection{Boundary terms}

\smallskip
\begin{lemma}\label{lem-boundary-1}
(1) For $s>\frac{1}{2}$, $\sigma>-\frac{1}{2}$,
\begin{align*}
  &\norm{\Omega(N,u)}_{H^{1}} \lesssim \norm{N}_{L^{2}}
  \norm{u}_{H^{s}},\\
  &\norm{\Omega(N,u)}_{H^{-1}} \lesssim \norm{N}_{L^{\sigma}}
  \norm{u}_{H^{-1}},\qquad
  \norm{\Omega(N,u)}_{H^{-1}} \lesssim \norm{N}_{H^{-1}}
  \norm{u}_{H^{\sigma}}.
\end{align*}

\noindent
(2) For $\gamma\in (-\frac{1}{2},1]$, $s >\frac{\gamma}{4}$, $\sigma>-\frac{1}{2}$,
\begin{align*}
  &\norm{\aD^{\frac{1-\gamma}{2}}\vD^{\gamma}\Theta(u,v)}_{L^{2}} 
  \lesssim \norm{u}_{H^{s}} \norm{v}_{H^{s}},\\
  &\norm{\vD^{\gamma}\Theta(u,v)}_{H^{-\frac{1+\gamma}{2}}}
  \lesssim \norm{u}_{H^{-\frac{1+\gamma}{2}}} \norm{v}_{H^{\sigma}}
    + \norm{u}_{H^{\sigma}}\norm{v}_{H^{-\frac{1+\gamma}{2}}}.
\end{align*}
\end{lemma}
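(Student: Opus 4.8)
The plan is to reduce everything to the bilinear Fourier multiplier estimate of Proposition~\ref{bilinear-Fourier-multiplier} applied to the operators $\Omega$ and $\Theta$, using the fact (established in Section~\ref{subsec-bilinear-FM}) that $m_\Omega=\vD\aD\Omega$ and $m_\Theta=\vD\aD\Theta$ (as well as the analogous low-frequency symbols) satisfy the Mikhlin-type bounds \eqref{multiplier-cond}. Since the symbols $\cP_{XL}$ and $\cP_{XL+LX}$ are supported where $|\xi|\sim|\eta+\xi\mp(\text{something})|$ — more precisely, on the $XL$ piece one has $|\eta|\ll|\xi|\sim|\xi+\eta|$, and on $LX$ the roles are swapped — each dyadic block decouples and the denominators $\omega,\theta$ are comparable to $|\xi|^2$ (or to $|\xi|$ in the low-frequency regime), exactly as in \eqref{omega-est} and \eqref{theta-est}. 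After peeling off the first difference of weights via Proposition~\ref{bilinear-Fourier-multiplier}, the remaining powers of $\aD$ and $\vD$ are absorbed into the Sobolev norms on the two input factors, and the dyadic sums are summable because of the strict inequalities $s>\tfrac12$, $\sigma>-\tfrac12$, $s>\tfrac\gamma4$, which produce geometric series in $j$.

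Concretely, for part~(1) I would write $\aD \Omega(N,u) = \vD^{-1}\bigl(\vD\aD\Omega\bigr)(N,u) = \vD^{-1}T_{m_\Omega}(N,u)$, decompose $T_{m_\Omega}$ into its dyadic constituents $T_{m_\Omega^j}(P_jN, P_{\le j-4}u)$ as in Section~\ref{subsec-bilinear-FM}, and use $\tfrac1r=\tfrac1p+\tfrac1q$ with an appropriate radial Sobolev embedding (e.g. $H^s\subset L^\infty$ for the high factor is too strong, so instead use $L^2\times(\text{something in }L^\infty$ via $s>\tfrac12$) — actually the cleanest split is $\|P_jN\|_{L^2}\|P_{\le j-4}u\|_{L^\infty}$ with the latter controlled by $\sum_{k\le j-4}2^{3k/2}\|P_ku\|_{L^2}\lesssim 2^{(3/2-s)j}\|u\|_{H^s}$ when... hmm, that needs care). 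The honest route for the first bound is: $T_{m_\Omega^j}(P_jN,P_{\le j-4}u)$ has output frequency $\sim 2^j$, so applying $\vD^{-1}$ and then $L^2$-orthogonality in $j$ reduces matters to $\sum_j \|P_jN\|_{L^2}^2\|P_{\le j-4}u\|_{L^\infty}^2$; Bernstein gives $\|P_{\le j-4}u\|_{L^\infty}\lesssim 2^{3j/2}\|P_{\le j-4}u\|_{L^2}\lesssim 2^{3j/2}\|u\|_{L^2}$, which loses high frequencies, so one instead keeps the $\aD$ on $u$: the symbol with an extra $\aD_\eta^{-1}\cdot\aD_\eta$ inserted still obeys \eqref{multiplier-cond}, letting us land on $\|P_jN\|_{L^2}\|P_{\le j-4}\aD^s u\|_{L^{q}}$-type terms and close via $s>\tfrac12$ exactly as \eqref{low-freq-est} is used in Lemma~\ref{lem-quadratic-1}. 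The negative-regularity bounds in (1) and the $L^2$ and $H^{-(1+\gamma)/2}$ bounds in (2) follow the same template, moving the derivatives onto whichever factor has the matching negative index and using the complementary factor's positive index ($\sigma>-\tfrac12$, resp. $s>\tfrac\gamma4$) to sum.

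For part~(2) the only genuinely new feature is the factor $\vD^\gamma$ with $\gamma$ possibly negative (down to $\gamma>-\tfrac12$) together with the low-output-frequency part of $\cP_{XL+LX}$, where $\theta\sim|\xi+\eta|$ rather than $|\xi+\eta|^2$; this is precisely the regime handled in the $j\le -3$ estimates \eqref{theta-est}, and there the net homogeneity of $m_\Theta=\vD\aD\Theta$ is still bounded, so $\vD^{\gamma-1}T_{m_\Theta}$ carries net order $\gamma-1<0$ near frequency $0$, which is harmless in $L^2$ and is exactly why the threshold $s>\tfrac\gamma4$ (not $s>\tfrac14$) appears: splitting the output derivatives $\tfrac\gamma2$ onto each of the two $u$-factors and using the radial embedding $\dot H^{\gamma/2+}\hookrightarrow\dots$ costs $\tfrac\gamma4$ per factor. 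The main obstacle I anticipate is bookkeeping the interplay between the inhomogeneous weight $\aD^{(1-\gamma)/2}$, the homogeneous weight $\vD^\gamma$, and the low-frequency branch $j\le -3$: one must check that after distributing derivatives the dyadic coefficients form a convergent geometric series at both ends ($j\to+\infty$ forced by $s$ large enough, $j\to-\infty$ forced by $\gamma>-\tfrac12$ together with the $XL$-support gap $|\xi|\le 2^{-3}$), and that the Mikhlin bounds \eqref{multiplier-cond} survive each redistribution — but this is routine given that $m_\Omega,m_\Theta$ themselves already verify \eqref{multiplier-cond} and multiplying by a symbol homogeneous of a fixed degree in $\xi$ alone (or in $\eta$ alone) preserves the class on the relevant supports.
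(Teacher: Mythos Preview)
Your route via Proposition~\ref{bilinear-Fourier-multiplier} plus dyadic decomposition is workable in principle, but it is \emph{not} the paper's approach, and your execution is too tentative to stand as a proof. The paper does something much more elementary for this lemma: it never invokes the Coifman--Meyer estimate at all. Instead it bounds the full multiplier \emph{pointwise} on the Fourier side --- for instance
\[
  \Bigl|\frac{\langle\xi\rangle}{\omega(\xi-\eta,\eta)}\Bigr|\,\cP_{XL}(\xi-\eta,\eta)\;\lesssim\;|\eta|^{-1},
\]
using only the support condition $|\eta|\ll|\xi-\eta|\simeq|\xi|$ and the size bounds on $\omega$ from Section~\ref{subsec-bilinear-FM} --- and then applies Young's convolution inequality (e.g.\ $L^2\ast L^1\hookrightarrow L^2$, or $L^{4/3}\ast L^{4/3}\hookrightarrow L^2$ for the $\Theta$ bound in part~(2)). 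The thresholds $s>\tfrac12$, $\sigma>-\tfrac12$, $s>\tfrac{\gamma}{4}$, $\gamma>-\tfrac12$ then drop out as integrability conditions for a single explicit weight like $\bigl\||\eta|^{-1}\langle\eta\rangle^{-s}\bigr\|_{L^2}$. No dyadic sums, no Bernstein, no $\ell^2$-orthogonality in $j$ are needed.

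By contrast, your plan imports the heavier machinery that the paper reserves for Lemmas~\ref{lem-boundary-2}, \ref{lem-boundary-3}, and \ref{lem-cubic}, where Besov-space outputs genuinely require dyadic control. Two specific issues with your sketch: first, your repeated appeal to ``radial Sobolev embedding'' and ``the radial embedding $\dot H^{\gamma/2+}\hookrightarrow\cdots$'' is misplaced --- this lemma has nothing to do with radiality, and the paper's proof makes no such assumption. Second, the internal hesitation (``hmm, that needs care'', ``which loses high frequencies'', ``the honest route\dots'') shows you have not actually closed the dyadic sums; the redistribution-of-derivatives step you allude to can be carried out, but you have not done it, and it is more delicate at low output frequency ($j\le -3$) than you suggest. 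If you want to pursue your route, you must write the sums out explicitly for both $j\ge 3$ and $j\le -3$ and verify convergence --- or simply switch to the paper's direct Fourier--Young argument, which is a few lines.
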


\begin{proof}
(1) We first recall that
\[
  \cF\Sb{\aD\Omega(N,u)}(\xi)
  = (2\pi)^{-\frac{3}{2}}\int \frac{\ab{\xi}}{\omega(\xi-\eta,\eta)}\,\cP_{XL}(\xi-\eta,\eta)\wh{N}(\xi-\eta)\wh{u}(\eta)\,d\eta.
\]
The argument to show \eqref{omega-est} implies
\begin{equation}\label{pf-lem-boundary-1_1}
  \ABS{\frac{\ab{\xi}}{\omega(\xi-\eta,\eta)}}\,
  \cP_{XL}(\xi-\eta,\eta)
  \lesssim  \frac{\ab{\xi-\eta}}{\ab{\xi-\eta}\abs{\xi-\eta}}\,\cP_{XL}(\xi-\eta,\eta)
  \lesssim \abs{\eta}^{-1}.
\end{equation}
Note that $(\xi-\eta,\eta)\in \supp\cP_{XL}$ implies $\abs{\eta}\ll \abs{\xi-\eta}\simeq \abs{\xi}$.
Thus,
\begin{align*}
  \norm{\aD\Omega(N,u)}_{L^{2}}
  &\lesssim\NM{\int \abs{\eta}^{-1}\abs{\wh{N}(\xi-\eta)}
   \abs{\wh{u}(\eta)}\,d\eta}_{L^{2}}\\
  &\le \Norm{\wh{N}}_{L^{2}}
  \Norm{\abs{\eta}^{-1}\abs{\wh{u}(\eta)}}_{L^{1}}\\
  &\le \Norm{\abs{\eta}^{-1}\,\ab{\eta}^{-s}}_{L^{2}}
    \norm{N}_{L^{2}}\Norm{\ab{\eta}^{s}\wh{u}(\eta)}_{L^{2}}.
\end{align*}
Since $s >\frac{1}{2}$ implies $2(-1-s)<-3$, we obtain the desired estimate.

\smallskip
For the second one, we estimate 
\begin{align*}
  \ABS{\frac{\cP_{XL}(\xi-\eta,\eta)}{\ab{\xi}\,\omega(\xi-\eta,\eta)}}
  &\lesssim  \frac{\cP_{XL}(\xi-\eta,\eta)}{\ab{\xi}\ab{\xi-\eta}\abs{\xi-\eta}}  \lesssim \ab{\eta}^{-1} \ab{\xi-\eta}^{-1}\abs{\xi-\eta}^{-1},
\end{align*}
since $(\xi-\eta,\eta)\in \supp\cP_{XL}$ implies $\abs{\eta}\ll \abs{\xi-\eta}\simeq \abs{\xi}$.
Thus,
\begin{align*}
  \norm{\aD\Omega(N,u)}_{L^{2}}
  &\lesssim \NM{\int \ab{\eta}^{-1}\ab{\xi-\eta}^{-1}\abs{\xi-\eta}^{-1}\abs{\wh{N}(\xi-\eta)}
   \abs{\wh{u}(\eta)}\,d\eta}_{L^{2}}\\
  &\le \Norm{\ab{\eta}^{-1}\abs{\eta}^{-1}\wh{N}(\eta)}_{L^{1}}
  \Norm{\ab{\eta}^{-1}\wh{u}(\eta)}_{L^{2}}\\
  &\le \Norm{\abs{\eta}^{-1}\,\ab{\eta}^{-1-\sigma}}_{L^{2}}
    \norm{\ab{\eta}^{\sigma}\wh{N}}_{L^{2}}\norm{u}_{H^{s}}.
\end{align*}
Since $\sigma >-\frac{1}{2}$ implies $2(-2-\sigma)<-3$, we obtain the desired estimate.

The third one is proved similarly.

\medskip
\noindent
(2) We recall that
\[
  \cF\Sb{\aD^{\frac{1-\gamma }{2}}\vD^{\gamma}\Theta(u,v)}(\xi)
  = (2\pi)^{-\frac{3}{2}}\int \frac{\ab{\xi}^{\frac{1-\gamma}{2}}\abs{\xi}^{\gamma}}{\theta(\xi-\eta,\eta)}\,\cP_{XL+LX}(\xi-\eta,\eta)\wh{u}(\xi-\eta)\wh{v}(\eta)\,d\eta.
\]
The argument to show \eqref{theta-est} implies
\begin{align*}
  \ABS{\frac{\ab{\xi}^{\frac{1-\gamma}{2}}\abs{\xi}^{\gamma}}{\theta(\xi-\eta,\eta)}}\,
  \cP_{XL}(\xi-\eta,\eta)
  &\lesssim  \frac{\ab{\xi-\eta}^{\frac{1-\gamma}{2}}\abs{\xi-\eta}^{\gamma}}{\ab{\xi-\eta}\abs{\xi-\eta}}\,\cP_{XL}(\xi-\eta,\eta)\\
  &\lesssim \ab{\xi-\eta}^{-\frac{1+\gamma}{4}}\abs{\xi-\eta}^{-\frac{1-\gamma}{2}}
    \ab{\eta}^{-\frac{1+\gamma}{4}}\abs{\eta}^{-\frac{1-\gamma}{2}}.
\end{align*}
Note that $(\xi-\eta,\eta)\in \supp\cP_{XL}$ implies $\abs{\eta}\ll \abs{\xi-\eta}\simeq \abs{\xi}$.
Thus,
\begin{align*}
  \norm{\aD^{\frac{1-\gamma }{2}}\vD^{\gamma}\Theta(u,v)}_{L^{2}}
  &\lesssim \NM{\int \ab{\xi-\eta}^{-\frac{1+\gamma}{4}}\abs{\xi-\eta}^{-\frac{1-\gamma}{2}}\abs{\wh{u}(\xi-\eta)}
   \ab{\eta}^{-\frac{1+\gamma}{4}}\abs{\eta}^{-\frac{1-\gamma}{2}}\abs{\wh{v}(\eta)}\,d\eta}_{L^{2}}\\
  &\le \Norm{\ab{\xi}^{-\frac{1+\gamma}{4}}\abs{\xi}^{-\frac{1-\gamma}{2}}\abs{\wh{u}(\xi)}}_{L^{\frac{4}{3}}}
  \Norm{\ab{\eta}^{-\frac{1+\gamma}{4}}\abs{\eta}^{-\frac{1-\gamma}{2}}\abs{\wh{v}(\eta)}}_{L^{\frac{4}{3}}}\\
  &\le \Norm{\ab{\xi}^{-\frac{1+\gamma}{4}-s}\,\abs{\xi}^{-\frac{1-\gamma}{2}}}_{L^{4}}^{2}
    \norm{u}_{H^{s}}\norm{v}_{H^{s}}.
\end{align*}
Since $\gamma >-\frac{1}{2}$ implies $-2(1-\gamma)>-3$, and $s >\frac{\gamma}{4}$ implies  $-(1+\gamma +4s +2(1-\gamma))<-3$, we obtain the desired estimate.

\smallskip
For the second one, we estimate 
\begin{align*}
  \ABS{\frac{\ab{\xi}^{-\frac{1+\gamma}{2}}\abs{\xi}^{\gamma}}{\theta(\xi-\eta,\eta)}}\,
  \cP_{XL}(\xi-\eta,\eta)
  &\lesssim  \frac{\ab{\eta}^{-\frac{1+\gamma}{2}}\abs{\xi-\eta}^{\gamma}}{\ab{\xi-\eta}\abs{\xi-\eta}}\,\cP_{XL}(\xi-\eta,\eta)\\
  &\lesssim \ab{\xi-\eta}^{-1}\abs{\xi-\eta}^{-1+\gamma}
    \ab{\eta}^{-\frac{1+\gamma}{2}}.
\end{align*}
since $(\xi-\eta,\eta)\in \supp\cP_{XL}$ implies $\abs{\eta}\ll \abs{\xi-\eta}\simeq \abs{\xi}$.
Thus,
\begin{align*}
  \norm{\vD^{\gamma}\Theta_{XL}(u,v)}_{H^{-\frac{1+\gamma}{2}}}
  &\lesssim\NM{\int \ab{\xi-\eta}^{-1}\abs{\xi-\eta}^{-1+\gamma}\abs{\wh{u}(\xi-\eta)}
   \ab{\eta}^{-\frac{1+\gamma}{2}}\abs{\wh{v}(\eta)}\,d\eta}_{L^{2}}\\
  &\le \Norm{\ab{\eta}^{-1}\abs{\eta}^{-1+\gamma}\wh{u}(\eta)}_{L^{1}}
  \Norm{\ab{\eta}^{-\frac{1+\gamma}{2}}\wh{v}(\eta)}_{L^{2}}\\
  &\le \Norm{\abs{\eta}^{-1}\,\ab{\eta}^{-1-\sigma}}_{L^{2}}
    \norm{\ab{\eta}^{\sigma}\wh{u}}_{L^{2}}\norm{v}_{H^{-\frac{1+\gamma}{2}}}.
\end{align*}
Since $\sigma >-\frac{1}{2}$ implies $2(-2-\sigma)<-3$, we obtain the desired estimate.
\end{proof}

\smallskip
\begin{lemma}\label{lem-boundary-2}
Let $\gamma\in [0,1]$.
We set $\frac{1}{q(\ve)}=\frac{1}{4}+\frac{\ve}{3}$.
For $\ve>0$,
\begin{align*}
  \norm{\aD\Omega(N,u)}_{L^{2}_{t}\dot{B}^{\frac{1}{4}+\ve}_{q(\ve)}} 
  &\lesssim  \norm{N}_{L^{\infty}_{t}L^{2}_{x}}
  \norm{\aD u}_{L^{2}_{t}L^{6}_{x}},\\
  \Norm{\aD^{\frac{1-\gamma}{2}}\vD^{\gamma}\Theta(u,v)}_{L^{2}_{t}\dot{B}^{-\frac{1}{4}-\ve}_{q(-\ve)}} 
  &\lesssim \norm{u}_{L^{\infty}_{t}L^{2}_{x}}\norm{v}_{L^{2}_{t}L^{6}_{x}}
  + \norm{u}_{L^{2}_{t}L^{6}_{x}} \norm{v}_{L^{\infty}_{t}L^{2}_{x}}.
\end{align*}
\end{lemma}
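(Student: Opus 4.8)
The plan is to follow the same template used for Lemma \ref{lem-boundary-1}, namely to pass to the Fourier side, use the symbol bounds established in Section \ref{subsec-bilinear-FM} (i.e. the arguments giving \eqref{omega-est} and \eqref{theta-est}) to extract the pointwise decay of the multiplier, and then convert the bilinear multiplier estimate into a product estimate via Proposition \ref{bilinear-Fourier-multiplier}. Concretely, for the first inequality I would recall that the multiplier of $\aD\Omega(N,u)$ localized to $\cP_{XL}$ behaves, on its support, like $\abs{\xi-\eta}^{-1}$ (this is exactly \eqref{pf-lem-boundary-1_1}); crucially, since the multiplier $\aD\Omega$ already has a full derivative gain and the output frequency $\abs{\xi}\simeq\abs{\xi-\eta}=\abs{\text{the }N\text{-frequency}}$, the net effect is a gain of one derivative placed on the $N$ factor — i.e. $\aD\Omega(N,u)$ maps like $\vD^{-1}N$ times $u$. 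One then takes the $\dot B^{1/4+\ve}_{q(\ve)}$ norm in space: writing $\aD\Omega(N,u)$ as a sum over the output-frequency Littlewood–Paley pieces $P_k$, each piece is controlled (using Proposition \ref{bilinear-Fourier-multiplier} with exponents $\frac{1}{q(\ve)} = \frac{1}{q(-\ve)}+\frac{1}{6}$, which holds since $\frac{1}{q(-\ve)} = \frac14-\frac{\ve}{3}$ and $\frac16 = \frac12\cdot\frac13$... let me instead use $\frac{1}{q(\ve)}=\frac16+\frac{1}{12+\dots}$; the clean choice is $\frac{1}{q(\ve)} = \frac{1}{q(-\ve)} + (\frac{1}{q(\ve)}-\frac{1}{q(-\ve)})$ with the second factor being an $L^\infty$-type or $L^r$ placeholder) by $\norm{P_{\sim k}\vD^{-1}N}_{L^2}\norm{P_{\sim k}\aD u}_{L^6}$. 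Summing the $\ell^2$ square function over $k$ with the weight $2^{(1/4+\ve)k}$, and absorbing the weight $2^{(1/4+\ve)k}\cdot 2^{-k}$ into the $N$ factor, matches $\norm{N}_{L^2}$ after an $\ell^2$ Cauchy–Schwarz in $k$; integrating in time with Hölder ($L^2_t = L^\infty_t \cdot L^2_t$) gives $\norm{N}_{L^\infty_t L^2_x}\norm{\aD u}_{L^2_t L^6_x}$.

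For the second inequality I would proceed identically with $\Theta$ in place of $\Omega$. The symbol bound from \eqref{theta-est} gives, on the support of $\cP_{XL}$, that $\aD^{\frac{1-\gamma}{2}}\vD^\gamma\Theta(u,v)$ behaves like $\ab{\xi}^{\frac{1-\gamma}{2}}\abs{\xi}^\gamma/\abs{\xi}^2$ times the product, which one redistributes symmetrically between the two $u$-type factors exactly as in the proof of Lemma \ref{lem-boundary-1}(2): each factor picks up $\ab{\cdot}^{-\frac{1+\gamma}{4}}\abs{\cdot}^{-\frac{1-\gamma}{2}}$. Since the output frequency in the $XL$ piece is comparable to the large input frequency, the low-frequency (second) factor keeps its $L^2$ norm essentially untouched while the high-frequency factor carries the $\dot B^{-1/4-\ve}_{q(-\ve)}$ weight; summing the Littlewood–Paley square function and using Proposition \ref{bilinear-Fourier-multiplier} with $\frac{1}{q(-\ve)} = \frac{1}{2}+\frac{1}{6}-\dots$ — more precisely with the Hölder triple matching $L^{q(-\ve)}_x \hookleftarrow L^2_x \cdot L^6_x$ — gives the first term on the right-hand side; the $LX$ piece, where the roles of $u$ and $v$ are swapped, gives the second term. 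Time integration is again a trivial Hölder step $L^2_t = L^\infty_t\cdot L^2_t$.

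The only genuinely non-routine point is bookkeeping the frequency weights so that the homogeneous Besov norm $\dot B^{\pm 1/4\mp\ve}_{q(\pm\ve)}$ on the output is matched by an $L^2$ norm on one input and an $L^6$ norm on the other: this works precisely because the $XL/LX$ restriction forces $\abs{\xi}\simeq$ the large input frequency, so the Besov weight $2^{\pm(1/4+\ve)k}$ can be reassigned to that input, and because the multiplier gains exactly the number of derivatives needed to compensate (one derivative of gain for $\Omega$ versus $\aD$; $\frac{1-\gamma}{2}+\gamma$ derivatives of gain minus the weight $\aD^{\frac{1-\gamma}{2}}\vD^\gamma$ for $\Theta$, leaving a net two-derivative gain against $\abs{\xi}^2$). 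One must also check that Proposition \ref{bilinear-Fourier-multiplier} is applicable, i.e. that after extracting the pointwise bound the remaining symbol still satisfies the derivative hypothesis \eqref{multiplier-cond}; this is exactly what the phrase "can be done similarly" in Section \ref{subsec-bilinear-FM} asserts, and it transfers verbatim. I expect the bulk of the write-up to be this weight-chasing, with no analytic obstacle.
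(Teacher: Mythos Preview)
Your proposal has a genuine gap in the exponent bookkeeping, and it is exactly the step where you visibly hesitate. You want to apply Proposition~\ref{bilinear-Fourier-multiplier} with the target space $L^{q(\pm\ve)}$ and inputs in $L^2$ and $L^6$; but $\frac12+\frac16=\frac23$, while $\frac{1}{q(\ve)}=\frac14+\frac{\ve}{3}$ and $\frac{1}{q(-\ve)}=\frac14-\frac{\ve}{3}$. So the H\"older relation $\frac1r=\frac1p+\frac1q$ required in Proposition~\ref{bilinear-Fourier-multiplier} is simply not satisfied, and neither ``$L^{q(\ve)}\hookleftarrow L^2\cdot L^6$'' nor ``$L^{q(-\ve)}\hookleftarrow L^2\cdot L^6$'' makes sense. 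Your attempted fixes ($\frac{1}{q(\ve)}=\frac{1}{q(-\ve)}+\frac16$, etc.) do not repair this. A second, related error: you write that each Littlewood--Paley piece is controlled by $\norm{P_{\sim k}\vD^{-1}N}_{L^2}\norm{P_{\sim k}\aD u}_{L^6}$, but in $\cP_{XL}$ the second input is at \emph{low} frequency $P_{\le k-4}$, not $P_{\sim k}$; you cannot place a Littlewood--Paley projection at scale $k$ on the $u$ (resp.\ $v$) factor.

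The paper's argument supplies the missing idea: first use the Sobolev embedding $\dot H^{1/2}\hookrightarrow \dot B^{-1/4-\ve}_{q(-\ve)}$ (and, for the first inequality, $\dot H^{1}\hookrightarrow \dot B^{1/4+\ve}_{q(\ve)}$) to reduce to an $L^2$-based bound. Then apply Proposition~\ref{bilinear-Fourier-multiplier} with the split $\frac12=\frac12+\frac{1}{\infty}$, placing the high-frequency factor in $L^2$ and the low-frequency factor $P_k$ in $L^\infty$. Bernstein gives $\norm{P_k v}_{L^\infty}\lesssim 2^{k/2}\norm{P_k v}_{L^6}\le 2^{k/2}\norm{v}_{L^6}$, and the geometric sum $\sum_{k\le j-3}2^{k/2}\lesssim 2^{j/2}$ produces a factor that is then absorbed into the remaining frequency weight on the high-frequency input. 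This is the step your plan is missing; once you insert it, the rest of your outline (the $XL$/$LX$ symmetry for $\Theta$, and the trivial $L^2_t=L^\infty_t\cdot L^2_t$ time H\"older) is correct.
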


\begin{proof}
Since the first estimate is the same one as in \cite[Lemma 3.6]{guo-nakanishi_1}, we show the second estimate here.
For the second estimate, it suffices to prove
\[
  \Norm{\aD^{\frac{1-\gamma}{2}}\vD^{\gamma}\Theta_{XL}(u,v)}_{\dot{B}^{-\frac{1}{4}-\ve}_{q(-\ve)}} 
  \lesssim \norm{u}_{L^{2}} \norm{v}_{L^{6}}.
\]
We first apply the Sobolev embedding, and then apply Lemma \ref{HL-interaction} to estimate the HL-type interaction to obtain
\begin{align*}
  \Norm{\aD^{\frac{1-\gamma}{2}}\vD^{\gamma}\Theta_{XL}(u,v)}_{\dot{B}^{-\frac{1}{4}-\ve}_{q(-\ve)}}
  &\lesssim \Norm{\aD^{\frac{1-\gamma}{2}}\vD^{\gamma+\frac{1}{2}}\Theta_{XL}(u,v)}_{L^{2}}\\
  &= \Norm{\sum_{j\in\Z} \aD^{\frac{1-\gamma}{2}}\vD^{\gamma+\frac{1}{2}}\Theta_{XL}(P_{j}u, P_{\le j-3} v)}_{L^{2}}\\
  &\lesssim \RB{\sum_{j\in \Z} \ab{2^{j}}^{-1-\gamma}\,2^{(2\gamma -1) j}
  \norm{\aD\vD \Theta_{XL}(P_{j} u, P_{\le j-3} v)}_{L^{2}}^{2}}^{\frac{1}{2}}.
\end{align*}
Then, applying Proposition \ref{bilinear-Fourier-multiplier}, we obtain
\begin{align*}
  \norm{\aD \vD\Theta_{XL}(P_{j}u, P_{\le j-3} v)}_{L^{2}}
  &\le  \sum_{k\le j-3} \norm{\aD \vD\Theta_{XL}(P_{j}u, P_{k} v)}_{L^{2}}\\
  &\lesssim \sum_{k\le j-3} \norm{P_{j}u}_{L^{2}}\norm{P_{k}v}_{L^{\infty}}\\
  &\lesssim \sum_{k\le j-3} 2^{\frac{k}{2}}\norm{P_{j}u}_{L^{2}} \norm{v}_{L^{6}}
  \lesssim 2^{\frac{j}{2}}\norm{P_{j}u}_{L^{2}} \norm{v}_{L^{6}}.
\end{align*}
Therefore, we obtain
\begin{align*}
  \Norm{\aD^{\frac{1-\gamma}{2}}\vD^{\gamma}\Theta_{XL}(u,v)}_{\dot{B}^{-\frac{1}{4}-\ve}_{q(-\ve)}}
  &\lesssim \norm{v}_{L^{6}}\RB{\sum_{j\in \Z} \ab{2^{j}}^{-1-\gamma}\,2^{2\gamma j}
  \norm{P_{j}u}_{L^{2}}^{2}}^{\frac{1}{2}}\\
  &\lesssim \Norm{\aD^{-\frac{1-\gamma}{2}}u}_{L^{2}} \norm{v}_{L^{6}}
  \lesssim \norm{u}_{L^{2}} \norm{v}_{L^{6}}.
  \qedhere
\end{align*}
\end{proof}

The next lemma is used to estimate one of the cubic terms, and also to show the scattering.

\begin{lemma}\label{lem-boundary-3}
Let $\gamma\in [\frac{1}{2},1]$.
We set $\frac{1}{q(\ve)}=\frac{1}{4}+\frac{\ve}{3}$.
For $\ve>0$,
\begin{align*}
  &\norm{\aD\Omega(N, u)}_{L^{2}} 
  \lesssim  \norm{N}_{\dot{B}^{-\frac{1}{4}-\ve}_{q(-\ve)}} \norm{u}_{L^{2}},\\
  &\Norm{\aD^{\frac{1-\gamma}{2}}\vD^{\gamma}\Theta(u,v)}_{L^{2}}
    \lesssim \norm{u}_{L^{2}}\norm{\aD v}_{L^{6}}+ \norm{\aD u}_{L^{6}}\norm{v}_{L^{2}}.
\end{align*}
\end{lemma}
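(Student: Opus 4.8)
The plan is to prove both estimates by the mechanism already used for Lemmas~\ref{lem-boundary-1} and~\ref{lem-boundary-2}. By Section~\ref{subsec-bilinear-FM} (i.e.\ \eqref{omega-est}, \eqref{theta-est} and the derivative bounds stated there), the symbols of $\vD\aD\Omega$, $\vD\aD\Theta_{XL}$ and $\vD\aD\Theta_{LX}$ all satisfy the Coifman--Meyer condition \eqref{multiplier-cond}, so Proposition~\ref{bilinear-Fourier-multiplier} applies to each of their dyadic blocks. Moreover $\Omega$ is an $HL$-paraproduct and $\Theta=\Theta_{XL}+\Theta_{LX}$ an $HL+LH$ one, so by \eqref{HL-interaction} the $j$th block of the output has Fourier support in $\{2^{j-1}<\abs{\xi}<2^{j+1}\}$; since these are finitely overlapping in frequency, this lets me bound the $L^{2}$ norm of the full sum by the $\ell^{2}_{j}$ sum of the blocks, replace the remaining non-integer-order multipliers by the scalars $\ab{2^{j}}$ or $2^{j}$ on each block, and use Bernstein's inequality on the low-frequency input. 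The range of $\gamma$ enters only through the convergence of the resulting geometric series in $j$.

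For the $\Omega$-estimate, on the $j$th block $\aD\Omega(N,u)=\vD^{-1}\bigl(\vD\aD\Omega(P_{j}N,P_{\le j-4}u)\bigr)$ with $\vD^{-1}\sim 2^{-j}$, and Proposition~\ref{bilinear-Fourier-multiplier} with exponents from $\frac12=\frac1{q(-\ve)}+\frac1{q(\ve)}$ gives
\[
  \Norm{\vD\aD\Omega(P_{j}N,P_{\le j-4}u)}_{L^{2}}
  \lesssim \norm{P_{j}N}_{L^{q(-\ve)}}\norm{P_{\le j-4}u}_{L^{q(\ve)}}
  \lesssim 2^{(\frac34-\ve)j}\norm{P_{j}N}_{L^{q(-\ve)}}\norm{u}_{L^{2}},
\]
the last step being Bernstein's inequality applied to each low mode (using $3(\frac12-\frac1{q(\ve)})=\frac34-\ve>0$) followed by summation of the convergent geometric series. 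Hence the $j$th block of $\aD\Omega(N,u)$ has $L^{2}$ norm $\lesssim 2^{-(\frac14+\ve)j}\norm{P_{j}N}_{L^{q(-\ve)}}\norm{u}_{L^{2}}$, and the $\ell^{2}_{j}$ sum is $\norm{N}_{\dot B^{-\frac14-\ve}_{q(-\ve)}}\norm{u}_{L^{2}}$. This step needs no restriction on $\gamma$.

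For the $\Theta$-estimate it suffices to treat $\Theta_{XL}$, the term $\Theta_{LX}$ being handled by the mirror argument (here $\vD\aD\Theta_{LX}$ satisfies \eqref{multiplier-cond} because $\abs{\theta(\xi,\eta)}\gtrsim\abs{\eta}\ab{\eta}\simeq\abs{\xi+\eta}\ab{\xi+\eta}$ on $\supp\cP_{LX}$). On the $j$th block of $\Theta_{XL}(u,v)$ the first slot carries $P_{j}u$ and the second a frequency-$\lesssim 2^{j-4}$ piece of $v$, and writing $\aD^{\frac{1-\gamma}{2}}\vD^{\gamma}\Theta_{XL}=\aD^{-\frac{1+\gamma}{2}}\vD^{\gamma-1}\bigl(\vD\aD\Theta_{XL}\bigr)$ one picks up the scalar $\ab{2^{j}}^{-\frac{1+\gamma}{2}}2^{(\gamma-1)j}$. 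Proposition~\ref{bilinear-Fourier-multiplier} with $\frac12=\frac13+\frac16$, together with $\norm{P_{j}u}_{L^{3}}\lesssim 2^{j/2}\norm{P_{j}u}_{L^{2}}$ and $\norm{P_{\le j-4}v}_{L^{6}}\lesssim\norm{v}_{L^{6}}\lesssim\norm{\aD v}_{L^{6}}$ (Mikhlin), bounds the $j$th block by $\ab{2^{j}}^{-\frac{1+\gamma}{2}}2^{(\gamma-\frac12)j}\norm{P_{j}u}_{L^{2}}\norm{\aD v}_{L^{6}}$. The $\ell^{2}_{j}$ sum, using $\bigl(\sum_{j}\norm{P_{j}u}_{L^{2}}^{2}\bigr)^{1/2}\simeq\norm{u}_{L^{2}}$, is then
\[
  \Norm{\aD^{\frac{1-\gamma}{2}}\vD^{\gamma}\Theta_{XL}(u,v)}_{L^{2}}
  \lesssim\Bigl(\sup_{\abs{j}\ge 3}\ab{2^{j}}^{-\frac{1+\gamma}{2}}2^{(\gamma-\frac12)j}\Bigr)\norm{u}_{L^{2}}\norm{\aD v}_{L^{6}},
\]
and the supremum is finite exactly for $\gamma\in[\frac12,1]$: as $j\to+\infty$ the exponent of $2^{j}$ is $\frac{\gamma}{2}-1<0$ (uses $\gamma\le1$), and as $j\to-\infty$ it is $\gamma-\frac12\ge0$ (uses $\gamma\ge\frac12$). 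Adding the mirror bound $\Norm{\aD^{\frac{1-\gamma}{2}}\vD^{\gamma}\Theta_{LX}(u,v)}_{L^{2}}\lesssim\norm{\aD u}_{L^{6}}\norm{v}_{L^{2}}$ finishes the proof.

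The main obstacle is the balancing of the dyadic sum in the $\Theta$-estimate. The high-frequency end is harmless (uses only $\gamma\le1$); the constraint $\gamma\ge\frac12$ is forced at low output frequency, where $\vD^{\gamma}$ is a positive power of a small number and just fails to beat the factor $2^{-j}$ coming from $\Theta_{XL}\sim\theta^{-1}$ even after the $2^{j/2}$ gained by Bernstein on the high-frequency input $u$, the net being $2^{(\gamma-\frac12)j}$; this is the origin of the hypothesis $\gamma\ge\frac12$ for this lemma. A routine technical point is that Proposition~\ref{bilinear-Fourier-multiplier} is stated with sharp projectors in both slots: applying it with a low-pass projector in the second slot is immediate, since $m_{\Omega}$ and $m_{\Theta_{XL}}$ already confine the second frequency to $\{\abs{\eta}\lesssim 2^{j}\}$.
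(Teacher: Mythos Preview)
Your $\Omega$-estimate follows the paper's proof essentially verbatim. For the $\Theta$-estimate you choose a different H\"older split: you place the high-frequency input $P_j u$ in $L^3$ (gaining $2^{j/2}$ by Bernstein) and the low-pass piece of $v$ in $L^6$, whereas the paper places $P_j u$ in $L^2$ and each $P_k v$ in $L^\infty$, using Bernstein on the low-frequency pieces to produce a factor $2^{(1-\gamma)k}$ that makes the sum over $k\le j-3$ converge. Both routes lead to the same dyadic weight $\ab{2^j}^{-\frac{1+\gamma}{2}}2^{(\gamma-\frac12)j}$ and the same constraint $\gamma\ge\frac12$ from $j\to-\infty$.

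One caveat on your closing ``routine technical point''. That the symbol $m_{\Theta_{XL}}^j$ already carries the cutoff $\varphi_{j-4}(\eta)$ only gives the identity $T_{m^j}(P_j u,v)=T_{m^j}(P_j u,P_{\le j-4}v)$; it does not by itself furnish the H\"older bound, which Proposition~\ref{bilinear-Fourier-multiplier} states only for \emph{dyadic} inputs $P_k f,P_l g$. If you decompose $P_{\le j-4}v=\sum_{k\le j-4}P_k v$ and apply the proposition as written, your $L^3\times L^6$ split forces you to control $\sum_{k\le j-4}\norm{P_k v}_{L^6}$, which is not dominated by $\norm{v}_{L^6}$ --- this is precisely why the paper chooses $L^2\times L^\infty$, so that Bernstein on each $P_k v$ supplies a geometric factor in $k$. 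Your inequality is nonetheless true, but for a different reason: $m_{\Theta_{XL}}^j$ is smooth, supported in $\{\abs{\xi}\sim 2^j,\ \abs{\eta}\lesssim 2^j\}$, and satisfies the stronger bound $\abs{\partial_\xi^\alpha\partial_\eta^\beta m_{\Theta_{XL}}^j}\lesssim 2^{-(\abs{\alpha}+\abs{\beta})j}$ there, so its bilinear kernel has $L^1$-norm $O(1)$ uniformly in $j$, and the bound $\norm{T_{m^j}(f,g)}_{L^2}\lesssim\norm{f}_{L^3}\norm{g}_{L^6}$ follows directly from H\"older. You should give this justification rather than the one you wrote.
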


\begin{proof}
We first apply Lemma \ref{HL-interaction} to estimate the HL-type interaction to obtain
\begin{align*}
  \norm{\aD\Omega(N, u)}_{L^{2}_{x}} 
  &= \Norm{\sum_{j\in\Z} \aD\Omega(P_{j}N, P_{\le j-3}u)}_{L^{2}}\\
  &\lesssim \RB{\sum_{j\in \Z} 2^{-2j}
  \norm{\aD\vD \Omega(P_{j} N, P_{\le j-3}u)}_{L^{2}}^{2}}^{\frac{1}{2}}.
\end{align*}
Now we apply Proposition \ref{bilinear-Fourier-multiplier} to obtain
\begin{align*}
  \norm{\aD\vD \Omega(P_{j} N, P_{\le j-3}u)}_{L^{2}}
  &\le  \sum_{k\le j-3} \norm{\aD\vD \Omega(P_{j} N, P_{k} u)}_{L^{2}}\\
  &\lesssim \sum_{k\le j-3} \norm{P_{j}N}_{L^{q(-\ve)}}\norm{P_{k}u}_{L^{q(\ve)}}\\
  &\lesssim \sum_{k\le j-3} \norm{P_{j}N}_{L^{q(-\ve)}}\, 
    2^{(\frac{3}{4}-\ve)k}\norm{P_{k}u}_{L^{2}}\\
  &\lesssim 2^{(\frac{3}{4}-\ve)j} \norm{P_{j}N}_{L^{q(-\ve)}} \norm{u}_{L^{2}}.
\end{align*}
Thus, we obtain
\begin{align*}
  \norm{\aD\Omega(N, u)}_{L^{2}_{x}} 
  &\lesssim  \RB{\sum_{j\in\Z} 2^{2(-\frac{1}{4}-\ve)j} \norm{P_{j}N}_{L^{(q(-\ve)}}^{2}}^{\frac{1}{2}} \norm{u}_{L^{2}}.
\end{align*}

\smallskip
As for the second estimate, we apply Lemma \ref{HL-interaction} to estimate the HL-type interaction to obtain
\begin{align*}
  \norm{\aD^{\frac{1-\gamma}{2}}\vD^{\gamma}\Theta_{XL}(u,v)}_{L^{2}_{x}} 
  &= \Norm{\sum_{j\in\Z} \aD^{\frac{1-\gamma}{2}}\vD^{\gamma}\Theta_{XL}(P_{j}u, P_{\le j-3}v)}_{L^{2}}\\
  &\lesssim \RB{\sum_{j\in \Z} \ab{2^{j}}^{-1-\gamma}\, 2^{2(\gamma -1)j}
  \norm{\aD\vD \Theta_{XL}(P_{j}u, P_{\le j-3}v)}_{L^{2}}^{2}}^{\frac{1}{2}}.
\end{align*}
Now we apply Proposition \ref{bilinear-Fourier-multiplier} to obtain
\begin{align*}
  \norm{\aD\vD \Theta_{XL}(P_{j}u, P_{\le j-3}v)}_{L^{2}}
  &\le  \sum_{k\le j-3} \norm{\aD\vD \Theta_{XL}(P_{j}u, P_{k}v)}_{L^{2}}\\
  &\lesssim \sum_{k\le j-3} \norm{P_{j}u}_{L^{2}}\norm{P_{k}v}_{L^{\infty}}\\
  &\lesssim \sum_{k\le j-3} \norm{P_{j}u}_{L^{2}}\,2^{(1-\gamma)k}\Norm{P_{k} \vD^{\gamma-\frac{1}{2}}v}_{L^{6}}\\
  &\lesssim  2^{(1-\gamma)j}\norm{P_{j}u}_{L^{2}}\Norm{ \vD^{\gamma-\frac{1}{2}}v}_{L^{6}}.
\end{align*}
Since $\gamma \ge \frac{1}{2}$, we obtain
\begin{align*}
  \norm{\aD^{\frac{1-\gamma}{2}}\vD^{\gamma}\Theta_{XL}(u,v)}_{L^{2}_{x}} 
  &\lesssim  \RB{\sum_{j\in\Z} \ab{2^{j}}^{-1-\gamma} \norm{P_{j}u}_{L^{2}}^{2}}^{\frac{1}{2}} \Norm{\aD^{\gamma-\frac{1}{2}}v}_{L^{6}} \\
  &\lesssim  \norm{u}_{L^{2}} \Norm{\aD^{\gamma-\frac{1}{2}}v}_{L^{6}}.
\end{align*}
Similarly, we obtain
\begin{align*}
  \norm{\aD^{\frac{1-\gamma}{2}}\vD^{\gamma}\Theta_{LX}(u,v)}_{L^{2}_{x}} 
  &\lesssim  \norm{v}_{L^{2}} \Norm{\aD^{\gamma-\frac{1}{2}}u}_{L^{6}}. \qedhere
\end{align*}
\end{proof}

\subsection{Cubic terms}

\smallskip
\begin{lemma}\label{lem-cubic}
Let $\gamma\in [\frac{1}{2},1]$.
We set $\frac{1}{q(\ve)}=\frac{1}{4}+\frac{\ve}{3}$.
For $\ve>0$,
\begin{align*}
  &\norm{\aD\Omega(\vD^{\gamma}(uv),w)}_{L^{1}_{t}L^{2}_{x}} 
  \lesssim  \norm{u}_{L^{\infty}_{t}L^{2}_{x}}\norm{\aD v}_{L^{2}_{t}L^{6}_{x}}
  \norm{\aD w}_{L^{2}_{t}L^{6}_{x}},\\
  &\norm{\aD\Omega(N, W u)}_{L^{1}_{t}L^{2}_{x}} 
  \lesssim  \norm{N}_{L^{2}_{t}\dot{B}^{-\frac{1}{4}-\ve}_{q(-\ve)}} \norm{W}_{L^{\infty}_{t}L^{2}_{x}}\norm{\aD u}_{L^{2}_{t}L^{6}_{x}},\\
  &\Norm{\aD^{\frac{1-\gamma}{2}}\vD^{\gamma}\Theta(Nu,v)}_{L^{1}_{t}L^{2}_{x}} 
  \lesssim \norm{N}_{L^{\infty}_{t}L^{2}_{x}}\norm{\aD u}_{L^{2}_{t}L^{6}_{x}} \norm{\aD v}_{L^{2}_{t}L^{6}_{x}}.
\end{align*}
\end{lemma}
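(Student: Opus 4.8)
The plan is to reduce all three inequalities to the same mechanism used for the boundary terms in Lemma~\ref{lem-boundary-3}: first split the argument of $\Omega$ or $\Theta$ into the high/low pieces dictated by $\cP_{XL}$ (resp. $\cP_{XL+LX}$), then peel off the bilinear operator via Proposition~\ref{bilinear-Fourier-multiplier} (using that $\aD\vD\Omega$ and $\aD\vD\Theta$ are bounded bilinear multipliers) and absorb the factors $\vD^{\gamma}$, $\aD^{\frac{1-\gamma}{2}}$ as powers $2^{\gamma j}$, $\ab{2^{j}}^{\frac{1-\gamma}{2}}$ on the output frequency $\sim 2^{j}$. The one extra ingredient beyond Lemma~\ref{lem-boundary-3} is the Sobolev embedding $W^{1,6}(\R^{3})\hookrightarrow L^{\infty}$, which turns the hypotheses $\aD u,\aD v\in L^{2}_{t}L^{6}_{x}$ into $\norm{u}_{L^{\infty}_{x}}+\norm{v}_{L^{\infty}_{x}}\lesssim\norm{\aD u}_{L^{6}_{x}}+\norm{\aD v}_{L^{6}_{x}}$; in particular products such as $Wu$, $Nu$ then lie in $L^{2}_{x}$.

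The second estimate is then immediate: apply the first inequality of Lemma~\ref{lem-boundary-3} with $h=Wu$ in the low slot, bound $\norm{Wu}_{L^{2}_{x}}\le\norm{W}_{L^{2}_{x}}\norm{u}_{L^{\infty}_{x}}\lesssim\norm{W}_{L^{2}_{x}}\norm{\aD u}_{L^{6}_{x}}$, and use H\"older in $t$ with exponents $(2,\infty,2)$.

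For the first estimate I would follow the proof of Lemma~\ref{lem-boundary-3} until one writes $\aD\Omega(\vD^{\gamma}(uv),w)=\sum_{j}\aD\Omega(\vD^{\gamma}P_{j}(uv),P_{\le j-3}w)$ with output frequency $\sim 2^{j}$; Proposition~\ref{bilinear-Fourier-multiplier} with the $(L^{2}_{x},L^{\infty}_{x})$ splitting together with $\norm{P_{\le j-3}w}_{L^{\infty}_{x}}\lesssim\norm{\aD w}_{L^{6}_{x}}$ gives $\norm{\aD\Omega(\vD^{\gamma}P_{j}(uv),P_{\le j-3}w)}_{L^{2}_{x}}\lesssim 2^{(\gamma-1)j}\norm{P_{j}(uv)}_{L^{2}_{x}}\norm{\aD w}_{L^{6}_{x}}$. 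Summing in $j$ by almost orthogonality of the output frequencies yields $\norm{uv}_{\dot{H}^{\gamma-1}}\norm{\aD w}_{L^{6}_{x}}$, and the sub-estimate $\norm{uv}_{\dot{H}^{\gamma-1}}\lesssim\norm{u}_{L^{2}_{x}}\norm{\aD v}_{L^{6}_{x}}$ follows from the dual Sobolev embedding $L^{r}\hookrightarrow\dot{H}^{\gamma-1}$ with $\frac{1}{r}=\frac{1}{2}+\frac{1-\gamma}{3}$, H\"older, and interpolation $L^{6}_{x}\cap L^{\infty}_{x}\hookrightarrow L^{3/(1-\gamma)}_{x}$; this last step is exactly where $\gamma\in[\tfrac12,1]$ enters. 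H\"older in $t$ with exponents $(\infty,2,2)$ finishes it.

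For the third estimate I would split $\Theta=\Theta_{XL}+\Theta_{LX}$. In $\Theta_{XL}(Nu,v)$ the rough product $Nu$ sits in the high slot, so the argument of Lemma~\ref{lem-boundary-3} applies and produces $\norm{Nu}_{L^{2}_{x}}\norm{\aD^{\gamma-\frac12}v}_{L^{6}_{x}}\lesssim\norm{N}_{L^{2}_{x}}\norm{\aD u}_{L^{6}_{x}}\norm{\aD v}_{L^{6}_{x}}$. The term $\Theta_{LX}(Nu,v)$, with $Nu$ in the low slot and $v$ in the high slot, is the main obstacle, since neither entry is regular enough to be fed into Lemma~\ref{lem-boundary-3}; here I would run the frequency decomposition directly. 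Writing $\aD^{\frac{1-\gamma}{2}}\vD^{\gamma}\Theta_{LX}(Nu,v)=\sum_{j}\aD^{-\frac{1+\gamma}{2}}\vD^{\gamma-1}(\aD\vD\Theta_{LX})(P_{\le j-3}(Nu),P_{j}v)$, Proposition~\ref{bilinear-Fourier-multiplier} with the $(L^{3}_{x},L^{6}_{x})$ splitting (whose output already lands in $L^{2}_{x}$), the Bernstein bound $\norm{P_{\le j-3}(Nu)}_{L^{3}_{x}}\lesssim 2^{j/2}\norm{Nu}_{L^{2}_{x}}\lesssim 2^{j/2}\norm{N}_{L^{2}_{x}}\norm{\aD u}_{L^{6}_{x}}$, and $\norm{P_{j}v}_{L^{6}_{x}}\lesssim\norm{\aD v}_{L^{6}_{x}}$ leave an $\ell^{2}_{j}$ sum with weight $\ab{2^{j}}^{-(1+\gamma)}2^{(2\gamma-1)j}$. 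This is summable at high frequency for all $\gamma\le1$, but at low frequency only for $\gamma>\tfrac12$; to reach the endpoint one replaces, on the pieces with $j\le-3$, the above bound by $\norm{P_{\le j-3}(Nu)}_{L^{3}_{x}}\lesssim 2^{j}\norm{Nu}_{L^{3/2}_{x}}\lesssim 2^{j}\norm{N}_{L^{2}_{x}}\norm{\aD u}_{L^{6}_{x}}$, which upgrades the low-frequency weight to $2^{2\gamma j}$ and restores summability. Adding the two pieces and applying H\"older in $t$ with exponents $(\infty,2,2)$ completes the proof; the low-frequency behaviour of $\Theta_{LX}(Nu,v)$ at $\gamma=\tfrac12$ is the only genuinely delicate point.
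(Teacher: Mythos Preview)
Your approach is correct and mirrors the paper's. Two points of comparison are worth noting. For the first estimate, the paper distributes the derivative gain onto $w$ rather than onto $uv$: it uses the Bernstein bound $\norm{P_{k}w}_{L^{\infty}}\lesssim 2^{(1-\gamma)k}\norm{\vD^{\gamma-\frac12}w}_{L^{6}}$ inside the $k$-sum, so the $j$-sum lands directly in $\norm{uv}_{L^{2}}$ and your detour through $\norm{uv}_{\dot H^{\gamma-1}}$ and dual Sobolev is avoided. For the $\Theta_{LX}$ piece, the paper uses the single Bernstein inequality $\norm{P_{k}(Nu)}_{L^{3}}\lesssim 2^{k}\norm{P_{k}(Nu)}_{L^{3/2}}$ for \emph{every} $k$, giving the weight $\ab{2^{j}}^{-1-\gamma}2^{2\gamma j}$, which is uniformly bounded for all $\gamma\in[0,1]$; your high/low case split and the ``genuinely delicate'' endpoint at $\gamma=\tfrac12$ therefore both disappear. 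One technicality to tighten: Proposition~\ref{bilinear-Fourier-multiplier} as stated applies to dyadic blocks $P_{k},P_{l}$ on both inputs, so when you invoke it with $P_{\le j-3}w$ in the low slot you must sum over $l\le j-3$ and insert a Bernstein factor $2^{cl}$ ($c>0$) to make that sum converge --- this is exactly what the paper does explicitly.
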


\begin{proof}
For the first estimate, it suffices to show
\[
  \norm{\aD\Omega(\vD^{\gamma}(uv),w)}_{L^{2}_{x}} 
  \lesssim  \norm{u}_{L^{2}_{x}}\norm{\aD v}_{L^{6}_{x}}
  \norm{\aD w}_{L^{6}_{x}}.\\
\]
We first apply Lemma \ref{HL-interaction} to estimate the HL-type interaction to obtain
\begin{align*}
  \norm{\aD\Omega(\vD^{\gamma}(uv),w)}_{L^{2}_{x}} 
  &= \Norm{\sum_{j\in\Z} \aD\Omega(P_{j}\vD^{\gamma}(uv), P_{\le j-3} w)}_{L^{2}}\\
  &\lesssim \RB{\sum_{j\in \Z} 2^{-2j}
  \norm{\aD\vD \Omega(P_{j} \vD^{\gamma}(uv), P_{\le j-3} w)}_{L^{2}}^{2}}^{\frac{1}{2}}.
\end{align*}
Now we apply Proposition \ref{bilinear-Fourier-multiplier} to obtain
\begin{align*}
  \norm{\aD \vD\Omega(P_{j}\vD^{\gamma}(uv), P_{\le j-3} w)}_{L^{2}}
  &\le  \sum_{k\le j-3} \norm{\aD \vD\Omega(P_{j}\vD^{\gamma}(uv), P_{k} w)}_{L^{2}}\\
  &\lesssim \sum_{k\le j-3} \norm{P_{j}\vD^{\gamma}(uv)}_{L^{2}}\norm{P_{k}w}_{L^{\infty}}\\
  &\lesssim \sum_{k\le j-3} 2^{\gamma j}\norm{P_{j}(uv)}_{L^{2}}\, 
    2^{(1-\gamma)k}\Norm{\vD^{\gamma -\frac{1}{2}} w}_{L^{6}}\\
  &\lesssim 2^{j}\norm{P_{j}(uv)}_{L^{2}} \Norm{\vD^{\gamma-\frac{1}{2}}w}_{L^{6}}.
\end{align*}
Since $\gamma \ge \frac{1}{2}$, we obtain
\[
 \norm{\aD\Omega(\vD^{\gamma}(uv),w)}_{L^{2}_{x}} 
  \lesssim  \norm{uv}_{L^{2}}\Norm{\aD^{\gamma -\frac{1}{2}} w}_{L^{6}}
 \lesssim  \norm{u}_{L^{2}} \norm{v}_{L^{\infty}}\Norm{\aD^{\gamma -\frac{1}{2}} w}_{L^{6}}.
\]
Applying $\norm{v}_{L^{\infty}}\lesssim \norm{\aD v}_{L^{6}}$, we obtain the desired estimate.
%

\smallskip
For the second estimate, it suffices to show
\[
  \norm{\aD\Omega(N, Wu)}_{L^{2}_{x}} 
  \lesssim  \norm{N}_{\dot{B}^{-\frac{1}{4}-\ve}_{q(-\ve)}} \norm{W}_{L^{2}}
  \norm{\aD u}_{L^{6}_{x}}.\\
\]
And this is an immediate consequence of Lemma  \ref{lem-boundary-3}.
In fact, we obtain
\begin{align*}
  \norm{\aD\Omega(N, Wu)}_{L^{2}_{x}} 
  &\lesssim \norm{N}_{\dot{B}^{-\frac{1}{4}-\ve}_{q(-\ve)}} \norm{Wu}_{L^{2}}
  \lesssim \norm{N}_{\dot{B}^{-\frac{1}{4}-\ve}_{q(-\ve)}} \norm{W}_{L^{2}}
    \norm{u}_{L^{\infty}}.
\end{align*}
Applying $\norm{u}_{L^{\infty}}\lesssim \norm{\aD u}_{L^{6}}$, we obtain the desired estimate.

\smallskip
For the third estimate, it suffices to show
\[
  \norm{\aD^{\frac{1-\gamma}{2}}\vD^{\gamma}\Theta(Nu,v)}_{L^{2}_{x}} 
  \lesssim  \norm{N}_{L^{2}} \norm{\aD u}_{L^{6}}
  \norm{\aD v}_{L^{6}_{x}}.
\]
Part of this is an immediate consequence of Lemma  \ref{lem-boundary-3}.
In fact, we obtain
\begin{align*}
  \norm{\aD^{\frac{1-\gamma}{2}}\vD^{\gamma}\Theta_{XL}(Nu,v)}_{L^{2}_{x}} 
  &\lesssim \norm{Nu}_{L^{2}} \norm{\aD v}_{L^{6}}
  \le \norm{N}_{L^{2}}\norm{u}_{L^{\infty}}  \norm{\aD v}_{L^{6}}.
\end{align*}
Applying $\norm{u}_{L^{\infty}}\lesssim \norm{\aD u}_{L^{6}}$, we obtain the desired estimate.

On the other hand, we apply Lemma \ref{HL-interaction} to estimate the LH-type interaction  to obtain
\begin{align*}
  \norm{\aD^{\frac{1-\gamma}{2}}\vD^{\gamma}\Theta_{LX}(Nu,v)}_{L^{2}_{x}} 
  &= \Norm{\sum_{j\in\Z} \aD^{\frac{1-\gamma}{2}}\vD^{\gamma}\Theta_{LX}(P_{\le j-3}(Nu), P_{j}v)}_{L^{2}}\\
  &\lesssim \RB{\sum_{j\in \Z} \ab{2^{j}}^{-1-\gamma}\, 2^{2(\gamma -1)j}
  \norm{\aD\vD \Theta_{LX}(P_{\le j-3}(Nu), P_{j}v)}_{L^{2}}^{2}}^{\frac{1}{2}}.
\end{align*}
Now we apply Proposition \ref{bilinear-Fourier-multiplier} to obtain
\begin{align*}
  \norm{\aD\vD \Theta_{LX}(P_{\le j-3}(Nu), P_{j}v)}_{L^{2}}
  &\le  \sum_{k\le j-3} \norm{\aD\vD \Theta_{LX}(P_{k}(Nu), P_{j}v)}_{L^{2}}\\
  &\lesssim \sum_{k\le j-3} \norm{P_{k}(Nu)}_{L^{3}}\norm{P_{j}v}_{L^{6}}\\
  &\lesssim \sum_{k\le j-3} 2^{k}\norm{P_{k}(Nu)}_{L^{\frac{3}{2}}}\norm{P_{j} v}_{L^{6}}\\
  &\lesssim  2^{j}\norm{Nu}_{L^{\frac{3}{2}}}\norm{P_{j} v}_{L^{6}}.
\end{align*}
Therefore, we obtain
\begin{align*}
  \norm{\aD^{\frac{1-\gamma}{2}}\vD^{\gamma}\Theta_{LX}(Nu,v)}_{L^{2}_{x}} 
  &\lesssim \norm{Nu}_{L^{\frac{3}{2}}}\RB{\sum_{j\in\Z} \ab{2^{j}}^{-1-\gamma}\, 2^{2\gamma j} \norm{P_{j} v}_{L^{6}}^{2}}^{\frac{1}{2}}\\
  &\lesssim \norm{N}_{L^{2}} \norm{u}_{L^{6}} \norm{v}_{L^{6}}.  \qedhere
\end{align*}
\end{proof}

\section{Proof of Theorem \ref{thm-1}}

In this section we give a proof of Theorem \ref{thm-1} by using the contraction mapping principle.
We set
\begin{align*}
  &\norm{u}_{X_{S}}=\norm{\aD u}_{L^{\infty}_{t}L^{2}}+\norm{\aD u}_{L^{2}_{t}\dot{B}^{\frac{1}{4}+\ve}_{q(\ve)}},\\
  &\norm{N}_{X_{W}}=\norm{\aD^{\frac{1-\gamma}{2}} N}_{L^{\infty}_{t}L^{2}}+\norm{\aD^{\frac{1-\gamma}{2}}  N}_{L^{2}_{t}\dot{B}^{-\frac{1}{4}-\ve}_{q(-\ve)}},
\end{align*}
and we define our resolution space by 
\[
  X_{\eta}=\cb{(u,N)\mid \norm{(u,N)}_{X}\le \eta},
\]
where $\norm{(u,N)}_{X}=\norm{u}_{X_{S}}+\norm{N}_{X_{W}}$.

For $u_{0}\in H^{1}$, $N_{0}\in H^{\frac{1-\gamma}{2}}$ with $\norm{u_{0}}_{H^{1}}+\norm{N_{0}}_{H^{\frac{1-\gamma}{2}}}\le \rho$, we define mappings
\begin{equation*}
\begin{aligned}
    \varPhi_{S}[u,N](t)
    &=S(t)u_{0}  + S(t)\Omega(N_{0},u_{0}) - \Omega(N(t), u(t))\\
    &\qquad {} -i \int_{0}^{t} S(t-\tau) (Nu)_{RL+LH+HH}(\tau)\,d\tau\\
    &\qquad {} -i \int_{0}^{t} S(t-\tau) \Omega\Rb{\vD^{\gamma}\rb{u\,\ol{u}}, u}(\tau)\,d\tau
   -i \int_{0}^{t} S(t-\tau) \Omega(N, Nu)(\tau)\,d\tau,
\end{aligned}
\end{equation*}
\begin{equation*}
\begin{aligned}
    \varPhi_{W}[u,N](t)
  &=W(t)N_{0}  + W(t)\vD^{\gamma}\Theta(u_{0}, \ol{u_{0}}) 
    - \vD^{\gamma}\Theta(u(t), \ol{u(t)})\\
    &\qquad {} -i \int_{0}^{t} W(t-\tau)\vD^{\gamma} (u\,\ol{u})_{RL+LR+HH}(\tau)\,d\tau\\
    &\qquad {} -i \int_{0}^{t} W(t-\tau)\vD^{\gamma} \Theta\rb{Nu, \ol{u}}(\tau)\,d\tau
   +i \int_{0}^{t} W(t-\tau) \vD^{\gamma} \Theta(u, \ol{Nu})(\tau)\,d\tau.
\end{aligned}
\end{equation*}

We show that $\varPhi[u,N]=(\varPhi_{S}[u,N],\varPhi_{W}[u,N])$ is a map from $X_{\eta}$ to itself and is a contraction mapping if $\eta =2C_{0}\rho$ is sufficiently small.

For $(u,N)\in X_{\eta}$, applying Proposition \ref{radial-Strichartz} we obtain
\begin{align*}
  \norm{\varPhi_{S}[u,N]}_{X_{S}}
  &\le C_{0}\norm{u_{0}}_{H^{1}} 
    + C \norm{\aD\Omega(N_{0},u_{0})}_{L^{2}}
    + C \norm{\aD\Omega(N, u)}_{L^{\infty}_{t}L^{2}_{x}\cap L^{2}_{t}\dot{B}^{\frac{1}{4}+\ve}_{q(\ve)}}\\
   &\qquad  {}+C \norm{\aD (Nu)_{LH+HH}}_{L^{1}_{t}L^{2}}
     + C\norm{\aD (Nu)_{RL}}_{L^{a'}_{t}\dot{B}^{\frac{3}{2}-\frac{2}{a}-\frac{3}{b}}_{b'}}\\
   &{}\qquad +C \norm{\aD\Omega(\vD^{\gamma}(u\ol{u}),u)}_{L^{1}_{t}L^{2}_{x}} 
   +C\norm{\aD\Omega(N, N u)}_{L^{1}_{t}L^{2}_{x}}.
\end{align*}
Then, applying Lemmas \ref{lem-boundary-1}, \ref{lem-boundary-2}, \ref{lem-quadratic-1}, \ref{lem-cubic}, we obtain
\begin{align*}
  \norm{\varPhi_{S}[u,N]}_{X_{S}}
  &\le C_{0}\norm{u_{0}}_{H^{1}} + C\norm{N_{0}}_{L^{2}}\norm{u_{0}}_{H^{1}}\\
  &\qquad {}+ C\norm{N}_{L^{\infty}L^{2}}\norm{u}_{L^{\infty}H^{1}}
    + C\norm{N}_{L^{\infty}L^{2}}\norm{\aD u}_{L^{2}\dot{B}^{\frac{1}{4}+\ve}_{q(\ve)}}\\
  &\qquad {}+ C\norm{N}_{L^{2}_{t}\dot{B}^{-\frac{1}{4}-\ve}_{q(-\ve)}} 
    \norm{\aD u}_{L^{2}_{t}\dot{B}^{\frac{1}{4}+\ve}_{q(\ve)}}
    + C \norm{N}_{L^{2}_{t}\dot{B}^{-\frac{1}{4}-\ve}_{q(-\ve)}} 
    \norm{u}_{L^{\infty}_{t}L^{2}\cap L^{2}_{t}\dot{B}^{\frac{1}{4}+\ve}_{q(\ve)}}\\
  &\qquad {}+ C \norm{u}_{L^{\infty}_{t}L^{2}_{x}}\norm{\aD u}_{L^{2}_{t}L^{6}_{x}}^{2}
   + C \norm{N}_{L^{2}_{t}\dot{B}^{-\frac{1}{4}-\ve}_{q(-\ve)}} \norm{N}_{L^{\infty}_{t}L^{2}_{x}}\norm{\aD u}_{L^{2}_{t}L^{6}_{x}}\\
  &\le C_{0} \norm{u_{0}}_{H^{1}} + C\cb{ \rho^{2} +  \norm{N}_{X_{W}}\norm{u}_{X_{S}}
    + \norm{u}_{X_{S}}^{3} + \norm{N}_{X_{W}}^{2}\norm{u}_{X_{S}}}.
\end{align*}
Similarly, we have
\begin{align*}
  \norm{\varPhi_{W}[u,N]}_{X_{W}}
  &\le C_{0}\norm{N_{0}}_{H^{\frac{1-\gamma}{2}}} 
  + C \norm{\aD^{\frac{\gamma -1}{2}}\vD^{\gamma}\Theta(u_{0},\ol{u_{0}})}_{L^{2}}\\
   &\qquad  {} + C \norm{\aD^{\frac{\gamma -1}{2}}\vD^{\gamma}\Theta(u,\ol{u})}_{L^{\infty}_{t}L^{2}_{x}\cap L^{2}_{t}\dot{B}^{\frac{1}{4}+\ve}_{q(\ve)}}\\
   &\qquad  {}+C \norm{\aD^{\frac{1-\gamma}{2}}\vD^{\gamma} (u \ol{u})_{HH}}_{L^{1}_{t}L^{2}_{x}}
     + C\norm{\aD^{\frac{1-\gamma}{2}}\vD^{\gamma} (uv)_{RL+LR}}_{L^{a'}_{t}\dot{B}^{\frac{3}{2}-\frac{1}{a}-\frac{3}{b}}_{b'}}\\
   &{}\qquad +C\Norm{\aD^{\frac{\gamma -1}{2}}\vD^{\gamma}\Theta(Nu,\ol{u})}_{L^{1}_{t}L^{2}_{x}}\\
  &\le C_{0}\norm{N_{0}}_{H^{\frac{1-\gamma}{2}}} 
     + C\norm{u_{0}}_{H^{1}}^{2}  + C\norm{u}_{L^{\infty}H^{1}}^{2}
     +  C\norm{u}_{L^{2}_{t}L^{6}_{x}} \norm{u}_{L^{\infty}_{t}L^{2}_{x}}\\
  &\qquad {}+ C\norm{u}_{L^{2}_{t}\dot{B}^{\frac{1}{4}+\ve}_{q(\ve)}} \norm{\aD^{\frac{\gamma}{2}} u}_{L^{2}_{t}\dot{B}^{\frac{1}{4}+\ve}_{q(\ve)}}
    + C  \norm{u}_{L^{\infty}_{t}L^{2}\cap L^{2}_{t}\dot{B}^{\frac{1}{4}+\ve}_{q(\ve)}}^{2}\\
  &\qquad {}+ C \norm{N}_{L^{\infty}_{t}L^{2}_{x}}\norm{\aD u}_{L^{2}_{t}L^{6}_{x}} \norm{\aD u}_{L^{2}_{t}L^{6}_{x}}\\
  &\le C_{0}\norm{N_{0}}_{H^{\frac{1-\gamma}{2}}} 
    + C\cb{ \rho^{2} +  \norm{u}_{X_{S}}^{2}
    + \norm{u}_{X_{S}}^{3} + \norm{N}_{X_{W}}\norm{u}_{X_{S}}}^{2}.
\end{align*}
Therefore, we obtain
\[
  \norm{\varPhi[u,N]}_{X}=\norm{\varPhi_{S}[u,N]}_{X_{S}}
    + \norm{\varPhi_{W}[u,N]}_{X_{W}}
    \le C_{0}\rho + C_{1}\cb{\rho^{2}+\eta^{2}+\eta^{3}}.
\]
Then, by setting $\eta=2C_{0}\rho$, we conclude that
\[
  \norm{\varPhi[u,N]}_{X}\le C_{0}\rho + C_{1}\cb{\rho + (2C_{0})^{2}\rho
    + + (2C_{0})^{3}\rho^{2}}\rho \le \eta,
\]
provided that $C_{1}\cb{\rho + (2C_{0})^{2}\rho
    +  (2C_{0})^{3}\rho^{2}}\le C_{0}$.

We can also prove that $\varPhi$ is a contraction map in a usual manner.

\medskip
We next show that the solution satisfy \eqref{cond-conti}.
The integral equations are written as
\begin{equation}\label{int-normal-schrodinger-2}
\begin{aligned}
  u(t)&=S(t)u_{0}  + S(t)\Omega(N_{0},u_{0}) - \Omega(N(t), u(t))\\
    &\qquad {} -i \int_{0}^{t} S(t-\tau) F(\tau)\,d\tau
     -i \int_{0}^{t} S(t-\tau) (Nu)_{RL}(\tau)\,d\tau,
\end{aligned}
\end{equation}
\begin{equation}\label{int-normal-wave-2}
\begin{aligned}
  N(t)&=W(t)N_{0}  + W(t)\vD^{\gamma}\Theta(u_{0}, \ol{u_{0}}) 
    - \vD^{\gamma}\Theta(u(t), \ol{u(t)})\\
    &\qquad {} -i \int_{0}^{t} W(t-\tau)G(\tau)\,d\tau
    -i \int_{0}^{t} W(t-\tau)\vD^{\gamma} (u\,\ol{u})_{RL+LR}(\tau)\,d\tau,
\end{aligned}
\end{equation}
where $u_{0}$, $\Omega(N_{0},u_{0})\in H^{1}$, $N_{0}$, $\vD^{\gamma}\Theta(u_{0}, \ol{u_{0}})\in H^{\frac{1-\gamma}{2}}$, and
\begin{align*}
  &F=(Nu)_{LH+HH}+\Omega\Rb{\vD^{\gamma}\rb{u\,\ol{u}}, u}+\Omega(N, Nu)\in L^{1}_{t}H^{1}_{x},\\
  &G=(u\,\ol{u})_{HH}+\vD^{\gamma} \Theta\rb{Nu, \ol{u}}+ \vD^{\gamma} \Theta(u, \ol{Nu})\in L^{1}_{t}H^{\frac{1-\gamma}{2}}_{x}.
\end{align*}
So, to show \eqref{cond-conti}, it suffices to check the third and the fifth term on the right hand side in each of the integral equations.
As for the third term of the first equation, applying Lemma \ref{lem-boundary-1}, we obtain
\begin{align*}
  &\norm{\Omega(N(t), u(t))-\Omega(N(t'), u(t'))}_{H^{1}}\\
  &\le \norm{\Omega(N(t)-N(t'),u(t))}_{H^{1}}+ \norm{\Omega(N(t'),u(t)-u(t'))}_{H^{1}}\\
  &\lesssim \eta\cb{ \norm{N(t)-N(t')}_{L^{2}}+ \norm{u(t)-u(t')}_{H^{1}}},
\end{align*}
and this term can be absorbed to the left hand side when we estimate
\[
  \norm{u(t)-u(t')}_{H^{1}}+\norm{N(t)-N(t')}_{H^{\frac{1-\gamma}{2}}}.
\]
As for the fifth term of the first equation, for $t>t'$, applying Proposition \ref{radial-Strichartz} and Lemma \ref{lem-quadratic-1}, we obtain
\begin{equation}\label{integral-resonant-est}
\begin{aligned}
  \NM{\int_{t'}^{t} S(t-\tau) (Nu)_{RL}(\tau)\,d\tau}_{H^{1}}
  &= \NM{\int_{0}^{\rho} S(\rho -\tau)\chi_{(t',t)}(\tau) (Nu)_{RL}(\tau)\,d\tau}_{H^{1}}\\
  &\lesssim \norm{\chi_{(t',t)} \aD (Nu)_{RL}}_{L^{a'}_{\tau}\dot{B}^{\frac{3}{2}-\frac{2}{a}-\frac{3}{b}}_{b'}}\\
  &\lesssim \norm{\chi_{(t',t)} (Nu)_{RL}}_{L^{a'}_{\tau}L^{b'}},
\end{aligned}
\end{equation}
where we take $\rho>t$ in the first place.
Thus, continuity follows from the Lebesgue dominated convergence theorem.
The third and the fifth term of the second equation can be treated similarly.

\medskip
We finally show that the solution is asymptotically free assuming $\gamma\in [\frac{1}{2},1)$.
To prove this, it suffices to show that
\[
  f(t)=S(-t)u(t),\quad g(t)=W(-t)N(t)
\]
have the limit as $t\to\infty$.
The integral equations which $f(t)$, $g(t)$ satisfy are
\begin{equation*}
\begin{aligned}
  f(t)&=u_{0}  + \Omega(N_{0},u_{0}) - S(-t)\Omega(N(t), u(t))\\
    &\qquad {} -i \int_{0}^{t} S(-\tau) F(\tau)\,d\tau
     -i \int_{0}^{t} S(-\tau) (Nu)_{RL}(\tau)\,d\tau,
\end{aligned}
\end{equation*}
\begin{equation*}
\begin{aligned}
  g(t)&=N_{0}  + \vD^{\gamma}\Theta(u_{0}, \ol{u_{0}}) 
    - W(-t)\vD^{\gamma}\Theta(u(t), \ol{u(t)})\\
    &\qquad {} -i \int_{0}^{t} W(-\tau)G(\tau)\,d\tau
    -i \int_{0}^{t} W(-\tau)\vD^{\gamma} (u\,\ol{u})_{RL+LR}(\tau)\,d\tau.
\end{aligned}
\end{equation*}
Since $F\in L^{1}_{t}H^{1}$, we have
\[
  \NM{\int_{t'}^{t} S(-\tau) F(\tau)\,d\tau}_{H^{1}}
  \le \ABS{\int_{t'}^{t} \norm{F(\tau)}_{H^{1}}}
  \to 0,\quad t,t'\to\infty.
\]
We also obtain
\begin{equation*}
\begin{aligned}
  \NM{\int_{t'}^{t} S(-\tau) (Nu)_{RL}(\tau)\,d\tau}_{H^{1}}
  &= \NM{\int_{0}^{\rho} S(\rho -\tau)\chi_{(t',t)}(\tau) (Nu)_{RL}(\tau)\,d\tau}_{H^{1}}\\
  &\lesssim \norm{\chi_{(t',t)} (Nu)_{RL}}_{L^{a'}_{\tau}L^{b'}}\to 0,\quad t,t'\to\infty.
\end{aligned}
\end{equation*}
The situation is similar in the second equation.
Thus, to prove 
\[
  \norm{f(t)-f(t')}_{H^{1}}\to 0,\qquad  
  \norm{g(t)-g(t')}_{H^{\frac{1-\gamma}{2}}}\to 0,\qquad t,t'\to\infty,
\] 
it suffices to show
\[
  \lim_{t\to\infty}\norm{\Omega(N(t), u(t))}_{H^{1}}=0,\qquad
  \lim_{t\to\infty}\Norm{\vD^{\gamma}\Theta(u(t), \ol{u(t)})}_{H^{\frac{1-\gamma}{2}}}=0.
\]
To prove this, since we have
\[
  \norm{\Omega (N(t),u(t))}_{H^{1}}\in L^{2}(0,\infty),\qquad
  \norm{\vD^{\gamma}\Theta (u(t),\overline{u(t)})}_{H^{\frac{1-\gamma}{2}}}\in L^{2}(0,\infty)
\]
by Lemma \ref{lem-boundary-3}, it suffices to prove
\[
  \Omega (N,u)\in BUC([0,\infty); H^{1}), \qquad 
  \vD^{\gamma}\Theta (u,\overline{u})\in BUC([0,\infty); H^{\frac{1-\gamma}{2}}),
\]
where $BUC([0,\infty); X)$ denotes the space of a Banach space $X$-valued bounded and uniformly continuous functions.
And this is an immediate consequence of
\begin{equation}\label{buc}
  u\in BUC([0,\infty); H^{1-\delta}), \quad N\in BUC([0,\infty); H^{\frac{1-\gamma}{2}-\delta})
\end{equation}
with $\delta\in (0,\frac{1-\gamma}{2})$, since by Lemma \ref{lem-boundary-1} we have
\begin{align*}
  &\norm{\Omega(N(t), u(t))- \Omega(N(t'), u(t'))}_{H^{1}}\\
  & \qquad \lesssim \norm{N(t)-N(t')}_{L^{2}} \norm{u(t)}_{H^{1}}
    +  \norm{N(t)}_{L^{2}} \norm{u(t)-u(t')}_{H^{1-\delta}},\\
  &\Norm{\vD^{\gamma}\Theta (u(t),\overline{u(t)})
    - \vD^{\gamma}\Theta (u(t'),\overline{u(t')})}_{H^{\frac{1-\gamma}{2}}}
    \lesssim \norm{u}_{L^{\infty}_{t}H^{1}}\norm{u(t)-u(t')}_{H^{1-\delta}}.
\end{align*}

\eqref{buc} is proved based on estimates
\[
  \norm{S(t)f -S(t')f}_{H^{1-\delta}}
  \le C\abs{t-t'}^{\frac{\delta}{2}} \norm{f}_{H^{1}},
\]
and for $\wt{F}=F+(Nu)_{RL}$, $t'<t$,
\begin{align*}
  &\NM{\int_{0}^{t}S(t-\tau) \wt{F}(\tau)\,ds -\int_{0}^{t'} S(t'-\tau) \wt{F}(\tau)\,d\tau}_{H^{1-\delta}}\\
  & \le \NM{\int_{t'}^{t}S(t-\tau) \wt{F}(\tau)\,d\tau}_{H^{1-\delta}}
    +\NM{\int_{0}^{t'}\Cb{S(t-\tau) F(\tau)-S(t'-\tau)F(\tau)}\,d\tau}_{H^{1-\delta}}\\
  & = \NM{\int_{0}^{\rho}  S(\rho -\tau) \chi_{(t',t)}(\tau) \wt{F}(\tau)\,d\tau}_{H^{1-\delta}}
    +\NM{\cb{S(t-t')-I} \int_{0}^{t'}S(t'-\tau) \wt{F}(\tau)\,d\tau}_{H^{1-\delta}}\\
  & \lesssim \norm{\chi_{(t',t)}F}_{L^{1}H^{1}} 
    + \norm{\chi_{(t',t)}  (Nu)_{RL}}_{L^{a'}_{\tau}L^{b'}}
    + \abs{t-t'}^{\frac{\delta}{2}}\,  \NM{\int_{0}^{t'}S(t'-\tau) \wt{F}(\tau)\,d\tau}_{L^{\infty}_{t'}H^{1}},
\end{align*}
where $t<\rho$.
As for the boundary term of the integral equation \eqref{int-normal-schrodinger-2}, applying Lemma \ref{lem-boundary-1} we estimate
\begin{align*}
  &\norm{\Omega(N(t), u(t))- \Omega(N(t'), u(t'))}_{H^{1-\delta}}\\
  &\le \norm{\Omega\rb{N(t)-N(t'), u(t)}}_{H^{1-\delta}}
    + \norm{\Omega\rb{N(t'), u(t)-u(t')}}_{H^{1-\delta}}\\
  & \le C \norm{N(t)-N(t')}_{L^{2}} \norm{u(t)}_{H^{1}}
    + C\norm{N(t)}_{L^{2}} \norm{u(t)-u(t')}_{H^{1-\delta}},
  &
\end{align*}
and these terms can be absorbed to the left hand side when we estimate
\[
  \norm{u(t)-u(t')}_{H^{1-\delta}}+\norm{N(t)-N(t')}_{H^{{\frac{1-\gamma}{2}-\delta}}},
\]
since $\norm{u(t)}_{H^{1}}$, $\norm{N(t)}_{L^{2}}\le \eta$.
Finally, we notice that we use
\[
  \norm{W(t) g -W(t') g }_{H^{{\frac{1-\gamma}{2}-\delta}}}
  \le C\abs{t-t'}^{\delta} \norm{g}_{H^{{\frac{1-\gamma}{2}}}}.
\]
to handle \eqref{int-normal-wave-2}.

\section{Proof of Theorem \ref{thm-2}}

We first show that for any $T>0$ we have
\begin{equation}\label{proof-thm2-ac}
  u\in AC\rb{(0,T); H^{-1}},\quad N\in AC\rb{(0,T); H^{-\frac{1+\gamma}{2}}},
\end{equation}
where $AC((0,T); X)$ denotes the space of a Banach space $X$-valued absolutely continuous functions.

To prove this, we first consider estimate of each terms on the right hand side of \eqref{int-normal-schrodinger-2}.

For $f\in H^{1}$, we have
\[
  S(t)f\in C^{1}((0,\infty); H^{-1}).
\]
and
\[
  \norm{S(t+h)f- S(t)f}_{H^{-1}}\le \abs{h}\norm{f}_{H^{1}}.
\]
Since $F\in L^{1}_{t}H^{1}_{x}$, we see that
\[
  \int_{0}^{t} S(t-\tau)F(\tau)\,d\tau
  =S(t)\int_{0}^{t} S(-\tau)F(\tau)\,d\tau\in AC([0,\infty); H^{-1}).
\]
Similarly, for any $T>0$ we have
\[
  \int_{0}^{t} S(t-\tau)(Nu)_{RL}(\tau)\,d\tau \in AC((0,T); H^{-1}),
\]
since
\[
  \norm{(Nu)_{RL}}_{L^{1}(0,T;H^{1})}
  \lesssim T^{\frac{1}{a}} \norm{(Nu)_{RL}}_{L^{a'}_{t}L^{b'}_{x}}<\infty,
\]
where $(a,b)$ is radial Schr\"{o}dinger admissible.
Below we set these two integral terms $F_{S}(t)$.

And applying Lemma \ref{lem-boundary-1}, we obtain
\begin{align*}
  & \norm{\Omega\rb{N(t+h),u(t+h)} -\Omega\rb{N(t),u(t)}}_{H^{-1}}\\
  & \lesssim \eta \cb{ \norm{N(t+h)-N(t)}_{H^{-\frac{1+\gamma}{2}}}
    + \norm{u(t+h)-u(t)}_{H^{-1}} }.
\end{align*}

Now for $\forall M$, we set 
\[
  0\le a_{1}<b_{1}<a_{2}<b_{2}<\cdots <a_{M}<b_{M}\le T
\]
and set $\dsp I_{M}=\bigcup_{j=1}^{M}(a_{j},b_{j})$.
Then,
\begin{align*}
  \sum_{j=1}^{M}\, \norm{u(b_{j})-u(a_{j})}_{H^{-1}}
  & \le \abs{I_{M}}\, \norm{u_{0}}_{H^{1}} 
    + C \abs{I_{M}}\, \norm{N_{0}}_{L^{2}}\norm{u_{0}}_{H^{1}}\\
  & \qquad + C\eta \sum_{j=1}^{M}\Cb{ \Norm{N(b_{j})-N(a_{j})}_{H^{-\frac{1+\gamma}{2}}}
    + \Norm{u(b_{j})-u(a_{j})}_{H^{-1}} }\\
  & \qquad + \sum_{j=1}^{M}\norm{F_{S}(b_{j})-F_{S}(a_{j})}_{H^{-1}}.
\end{align*}
Similarly, by setting
\[
  G_{W}(t)=\int_{0}^{t} W(-\tau)G(\tau)\,d\tau
    +\int_{0}^{t} W(-\tau)\vD^{\gamma} (u\,\ol{u})_{RL+LR}(\tau)\,d\tau
    \in AC([0,T]; H^{-\frac{1+\gamma}{2}}),
\]
we have
\begin{align*}
  &\sum_{j=1}^{M}\, \norm{N(b_{j})-N(a_{j})}_{H^{-\frac{1+\gamma}{2}}}
   \le \abs{I_{M}}\, \norm{N_{0}}_{H^{\frac{1-\gamma}{2}}} 
    + C \abs{I_{M}}\, \norm{u_{0}}_{H^{1}}^{2}\\
  & \qquad +C\eta \sum_{j=1}^{M} \norm{u(b_{j})-u(a_{j})}_{H^{-1}}
    +  \sum_{j=1}^{M}\norm{G_{W}(b_{j})-G_{W}(a_{j})}_{H^{-\frac{1+\gamma}{2}}}.
\end{align*}
Therefore, for $\forall \ve>0$, taking $\abs{I_{M}}\ll 1$, we conclude that
\[
  \sum_{j=1}^{M}\cb{ \norm{u(b_{j})-u(a_{j})}_{H^{-1}} +
    \norm{N(b_{j})-N(a_{j})}_{H^{-\frac{1+\gamma}{2}}}}< \ve.
\]

\medskip
Finally, we prove \eqref{thm-2-c1} and $(u,N)$ satisfy the original system \eqref{differential-equation}.
By \eqref{proof-thm2-ac}, $u$, $N$ are differentiable a.e.~and we set
\[
  R_{S}=i\d_{t}u-\Delta u-Nu,\qquad
  R_{W}=i\d_{t}N-\abs{\nabla}N-\vD^{\gamma} \abs{u}^{2}.
\]
Then, by using the integral equations \eqref{int-normal-schrodinger}, \eqref{int-normal-wave}, we see that
\begin{align*}
  & R_{S}=\Omega(R_{W}, u) + \Omega(N, R_{S})\quad \text{in}\ H^{-1}, \quad \text{a.e.}\ t,\\
  & R_{W}=\Theta(R_{S}, \ol{u}) + \Theta(u, \ol{R_{S}})\quad \text{in}\ H^{-\frac{1+\gamma}{2}}, \quad \text{a.e.}\ t
\end{align*}
hold.
More precisely, we carry out inverse procedure to derive \eqref{int-normal-schrodinger}, \eqref{int-normal-wave}.
Then, since $\norm{u(t)}_{H^{1}}\ll 1$, $\norm{N(t)}_{H^{\frac{1-\gamma}{2}}}\ll 1$, we obtain
\[
  R_{S}=0\quad\text{in}\ H^{-1}, \qquad R_{W}=0 \quad \text{in}\ H^{-\frac{\gamma+1}{2}}, \quad \text{a.e.}\ t.
\]
From this, we see that $(u,N)$ satisfy the integral equations \eqref{int-normal-schrodinger}, \eqref{int-normal-wave}, which implies \eqref{thm-2-c1}.

\section{Proof of scattering in the case $\gamma=1$.}\label{sec-appendix}

In this section, we give a proof of the scattering of the solution derived in Theorem \ref{thm-1} in the case $\gamma =1$.
In this case, we have a solution 
\begin{equation*}
  u\in C\rb{[0,\infty);H^{1}},\quad N\in C\rb{[0,\infty); L^{2}}
\end{equation*}
to  \eqref{int-normal-schrodinger}, \eqref{int-normal-wave} satisfying $\norm{u}_{X_{S}}+\norm{N}_{X_{W}}\le \eta$, where
\begin{align*}
  &\norm{u}_{X_{S}}=\norm{\aD u}_{L^{\infty}_{t}L^{2}}+\norm{\aD u}_{L^{2}_{t}\dot{B}^{\frac{1}{4}+\ve}_{q(\ve)}},\\
  &\norm{N}_{X_{W}}=\norm{N}_{L^{\infty}_{t}L^{2}}+\norm{N}_{L^{2}_{t}\dot{B}^{-\frac{1}{4}-\ve}_{q(-\ve)}}.
\end{align*}
As in the proof of Theorem \ref{thm-1}, to prove the scattering, it suffices to show
\begin{equation}\label{appendix-boundary-limit}
  \lim_{t\to\infty}\norm{\Omega(N(t), u(t))}_{H^{1}}=0,\qquad
  \lim_{t\to\infty}\Norm{\vD \Theta(u(t), \ol{u(t)})}_{L^{2}}=0.
\end{equation}
To prove this, since we have
\[
  \norm{\Omega (N(t),u(t))}_{H^{1}}\in L^{2}(0,\infty),\qquad
  \norm{\vD \Theta (u(t),\overline{u(t)})}_{L^{2}}\in L^{2}(0,\infty)
\]
by Lemma \ref{lem-boundary-3}, it suffices to prove
\begin{equation}\label{appendix-boundary-buc}
  \Omega (N,u)\in BUC([0,\infty); H^{1}), \qquad 
  \vD \Theta (u,\overline{u})\in BUC([0,\infty); L^{2}).
\end{equation}
To prove this, we prepare the following lemma, which is a slight modification of Lemma \ref{lem-boundary-1}.

\begin{lemma}\label{lem-boundary-4}
For $\delta\in (0,\frac{1}{2})$, $s>\frac{1}{2}+\delta$,
\begin{align*}
  \norm{\Omega(N,u)}_{H^{1}} \lesssim \norm{N}_{H^{-\delta}}
  \norm{u}_{H^{s}}.
\end{align*}
\end{lemma}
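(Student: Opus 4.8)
The plan is to repeat the proof of Lemma~\ref{lem-boundary-1}~(1), with the single modification of inserting the weight $\ab{\xi-\eta}^{\delta}$ into the symbol so that $\norm{N}_{L^{2}}$ is replaced by $\norm{N}_{H^{-\delta}}$, and then checking that the gain coming from the lower bound on $\abs{\omega}$ is still large enough to absorb this weight. First recall that
\[
  \cF\Sb{\aD\Omega(N,u)}(\xi)
  = (2\pi)^{-\frac{3}{2}}\int \frac{\ab{\xi}}{\omega(\xi-\eta,\eta)}\,\cP_{XL}(\xi-\eta,\eta)\,\wh{N}(\xi-\eta)\,\wh{u}(\eta)\,d\eta,
\]
and that on $\supp\cP_{XL}$ one has $\abs{\eta}\ll\abs{\xi-\eta}\simeq\abs{\xi}$.

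The key step will be the symbol bound: on $\supp\cP_{XL}$,
\[
  \ABS{\frac{\ab{\xi}\,\ab{\xi-\eta}^{\delta}}{\omega(\xi-\eta,\eta)}}\,\cP_{XL}(\xi-\eta,\eta)
  \lesssim \frac{\ab{\xi-\eta}^{\delta}}{\abs{\xi-\eta}}\,\cP_{XL}(\xi-\eta,\eta)
  \lesssim \abs{\eta}^{-1}\ab{\eta}^{\delta}.
\]
The first inequality is obtained by multiplying \eqref{pf-lem-boundary-1_1} by $\ab{\xi-\eta}^{\delta}$ (recall that \eqref{pf-lem-boundary-1_1} encodes $\abs{\omega(\xi-\eta,\eta)}\gtrsim\ab{\xi-\eta}\abs{\xi-\eta}$ together with $\ab{\xi}\simeq\ab{\xi-\eta}$ on $\supp\cP_{XL}$). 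For the second inequality I would split according to the two frequency branches of $\cP_{XL}$, exactly as in Section~\ref{subsec-bilinear-FM}: on the branch $\abs{\xi-\eta}\gtrsim1$ one has $\ab{\xi-\eta}\simeq\abs{\xi-\eta}$, so the middle quantity is $\simeq\abs{\xi-\eta}^{\delta-1}\lesssim\abs{\eta}^{\delta-1}\le\abs{\eta}^{-1}\ab{\eta}^{\delta}$, using that $0<\delta<1$ and $\abs{\xi-\eta}\gtrsim\abs{\eta}$; on the branch $\abs{\xi-\eta}\lesssim1$ one has $\ab{\xi-\eta}\simeq1$, so the middle quantity is $\simeq\abs{\xi-\eta}^{-1}\lesssim\abs{\eta}^{-1}$, again by $\abs{\xi-\eta}\gtrsim\abs{\eta}$.

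Granting the symbol bound, I would write $\wh{N}(\xi-\eta)=\ab{\xi-\eta}^{\delta}\bigl(\ab{\xi-\eta}^{-\delta}\wh{N}(\xi-\eta)\bigr)$ and estimate, via Plancherel, Young's convolution inequality ($L^{2}\ast L^{1}\hookrightarrow L^{2}$), and then the Cauchy--Schwarz inequality,
\begin{align*}
  \norm{\Omega(N,u)}_{H^{1}}
  &= \norm{\aD\Omega(N,u)}_{L^{2}}
  \lesssim \NM{\int \abs{\eta}^{-1}\ab{\eta}^{\delta}\,\Abs{\ab{\xi-\eta}^{-\delta}\wh{N}(\xi-\eta)}\,\abs{\wh{u}(\eta)}\,d\eta}_{L^{2}_{\xi}}\\
  &\lesssim \Norm{\ab{\,\cdot\,}^{-\delta}\wh{N}}_{L^{2}}\,\Norm{\abs{\eta}^{-1}\ab{\eta}^{\delta}\wh{u}(\eta)}_{L^{1}_{\eta}}
  \lesssim \Norm{\abs{\eta}^{-1}\ab{\eta}^{\delta-s}}_{L^{2}}\,\norm{N}_{H^{-\delta}}\,\norm{u}_{H^{s}}.
\end{align*}
Finally $\abs{\eta}^{-1}\ab{\eta}^{\delta-s}\in L^{2}(\R^{3})$: the singularity $\abs{\eta}^{-2}$ at the origin is integrable in three dimensions, and at infinity integrability of $\abs{\eta}^{-2}\ab{\eta}^{2(\delta-s)}$ amounts to $2(s-\delta)>1$, i.e.\ $s>\tfrac12+\delta$, which holds by hypothesis. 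This gives the claim.

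I do not expect a serious obstacle. The whole content is that, after cancelling $\ab{\xi}$, the lower bound on $\abs{\omega}$ leaves a full power of $\abs{\xi-\eta}^{-1}$ free, which is more than enough to swallow $\ab{\xi-\eta}^{\delta}$ while keeping $\abs{\xi-\eta}^{-(1-\delta)}\lesssim\abs{\eta}^{-(1-\delta)}$. The only point requiring a moment's attention is the low-frequency branch of $\cP_{XL}$, where $\abs{\omega}$ provides only a single power of $\abs{\xi-\eta}$: there, however, $\ab{\xi-\eta}^{\delta}\simeq1$, so the argument reduces verbatim to the one already carried out for Lemma~\ref{lem-boundary-1}~(1).
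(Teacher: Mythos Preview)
Your proof is correct and takes essentially the same approach as the paper: both establish the pointwise symbol bound $\ab{\xi}\,\abs{\omega(\xi-\eta,\eta)}^{-1}\,\cP_{XL}\lesssim\ab{\xi-\eta}^{-\delta}\ab{\eta}^{\delta}\abs{\eta}^{-1}$ on $\supp\cP_{XL}$ and then apply the identical Young/Cauchy--Schwarz argument. The only cosmetic difference is in verifying the symbol bound---the paper does it in one stroke via the factorization $\abs{\xi-\eta}^{-1}=\ab{\xi-\eta}^{-\delta}\bigl(\ab{\xi-\eta}/\abs{\xi-\eta}\bigr)^{\delta}\abs{\xi-\eta}^{-(1-\delta)}$ together with the observation that $r\mapsto(\ab{r}/r)^{\delta}$ is decreasing, whereas you split into the two frequency branches $\abs{\xi-\eta}\gtrsim1$ and $\abs{\xi-\eta}\lesssim1$.
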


\begin{proof}
The proof is similar to the one of Lemma \ref{lem-boundary-1}.
Actually, instead of \eqref{pf-lem-boundary-1_1}, we estimate
\begin{align*}
  \ABS{\frac{\ab{\xi}}{\omega(\xi-\eta,\eta)}}\,
  \cP_{XL}(\xi-\eta,\eta)
  &\lesssim  \frac{1}{\abs{\xi-\eta}}\,\cP_{XL}(\xi-\eta,\eta)\\
  &=\frac{1}{\ab{\xi-\eta}^{\delta}} 
    \RB{\frac{\ab{\xi-\eta}}{\abs{\xi-\eta}}}^{\delta}
    \frac{1}{\abs{\xi-\eta}^{1-\delta}}\,\cP_{XL}(\xi-\eta,\eta)\\
  &\lesssim \frac{1}{\ab{\xi-\eta}^{\delta}}
    \frac{\ab{\eta}^{\delta}}{\abs{\eta}},
\end{align*}
where we have used the fact that $(\xi-\eta,\eta)\in \supp\cP_{XL}$ implies $\abs{\eta}\ll \abs{\xi-\eta}\simeq \abs{\xi}$, and that  $f(r)=\Rb{\frac{\ab{r}}{\abs{r}}}^{\delta}=\Ab{\frac{1}{r}}^{\delta}$ is monotone decreasing in $r\in [0,\infty)$.
Thus,
\begin{align*}
  \norm{\aD\Omega(N,u)}_{L^{2}}
  &\lesssim \NM{\int \ab{\xi-\eta}^{-\delta}\abs{\wh{N}(\xi-\eta)}
   \ab{\eta}^{\delta}\,\abs{\eta}^{-1}\abs{\wh{u}(\eta)}\,d\eta}_{L^{2}}\\
  &\le \Norm{\ab{\eta}^{-\delta}\,\wh{N}}_{L^{2}}
  \Norm{\ab{\eta}^{\delta}\abs{\eta}^{-1}\abs{\wh{u}(\eta)}}_{L^{1}}\\
  &\le \Norm{\abs{\eta}^{-1}\,\ab{\eta}^{-s+\delta}}_{L^{2}}
    \norm{N}_{H^{-\delta}}\Norm{\ab{\eta}^{s}\wh{u}(\eta)}_{L^{2}}.
\end{align*}
Since $s >\frac{1}{2}+\delta$ implies $2(-1-s+\delta)<-3$, we obtain the desired estimate.
\end{proof}

Applying Lemma \ref{lem-boundary-4}, for $\delta\in (0,\frac{1}{4})$ we obtain
\begin{align}
  &\norm{\Omega (N(t),u(t)) -\Omega (N(t'),u(t'))}_{H^{1}} \notag\\
  &\lesssim \norm{N(t)-N(t')}_{H^{-\delta}} \norm{u(t)}_{H^{1-\delta}}
    +\norm{N(t')}_{H^{-\delta}} \norm{u(t)-u(t')}_{H^{1-\delta}}\notag\\
  &\lesssim \eta \norm{N(t)-N(t')}_{H^{-\delta}} 
  + \eta \norm{u(t)-u(t')}_{H^{1-\delta}}. \label{appendix-boundary-diff-est}
\end{align}
Similarly, by Lemma \ref{lem-boundary-1} we have
\[
  \Norm{\vD\Theta (u(t),\ol{u(t)}) -\vD\Theta (u(t'), \ol{u(t')})}_{L^{2}}
   \lesssim \eta \norm{u(t)-u(t')}_{H^{1-\delta}}.
\]
Therefore, \eqref{appendix-boundary-buc} follows from
\begin{equation}\label{appendix-buc}
  u\in BUC([0,\infty); H^{1-\delta}), \quad 
  N\in BUC([0,\infty); H^{-\delta}).
\end{equation}
The proof of \eqref{appendix-buc} is similar to the case $\gamma\in [\frac{1}{2},1)$, except for the use of \eqref{appendix-boundary-diff-est} to estimate the boundary term.

\end{document}